\documentclass[twoside,12pt]{amsart}

\usepackage{amsmath} 
\usepackage{calrsfs} 
\usepackage{bbm} 
\usepackage{fancyhdr}  
\usepackage{amssymb} 
\usepackage[all,cmtip]{xy} 
\usepackage[margin=1in]{geometry} 

\usepackage{aliascnt} 

%
\theoremstyle{plain}
\newtheorem{theorem}{Theorem}[section]
\newaliascnt{corollary}{theorem}
\newtheorem{corollary}[corollary]{Corollary}
\aliascntresetthe{corollary}

\newaliascnt{lemma}{theorem}
\newtheorem{lemma}[lemma]{Lemma}
\aliascntresetthe{lemma}
\newaliascnt{proposition}{theorem}
\newtheorem{proposition}[proposition]{Proposition}
\aliascntresetthe{proposition}

\newaliascnt{hypotheses}{theorem}

\aliascntresetthe{hypotheses}

\theoremstyle{definition}
\newaliascnt{definition}{theorem}
\newtheorem{definition}[definition]{Definition}
\aliascntresetthe{definition}

\newaliascnt{example}{theorem}

\aliascntresetthe{example}

\newaliascnt{remark}{theorem}
\newtheorem{remark}[remark]{Remark}
\aliascntresetthe{remark}

\newaliascnt{remarks}{theorem}

\aliascntresetthe{remarks}

\numberwithin{equation}{section}

%
%
%

\newcommand\shtitle{Regularized theta lifts and (1,1)-currents. I}
\newcommand\shortauthor{Luis E. Garcia}

\fancyhf{}

\fancyhead[CE]{\small\scshape \shtitle}
\fancyhead[CO]{\small\scshape \shortauthor}
\fancyhead[LE,RO]{\thepage}
\pagestyle{fancy}

\everymath{\displaystyle}

\usepackage[square]{natbib}
\usepackage[pdftex,breaklinks,colorlinks,
citecolor=blue,
linkcolor=blue,
urlcolor=blue]{hyperref}


\begin{document}
\title{Regularized theta lifts and (1,1)-currents on GSpin Shimura varieties. I}
\author{Luis E. Garcia}
\date{}

\begin{abstract}
We introduce a regularized theta lift for reductive dual pairs of the form $(Sp_4,O(V))$ with $V$ a quadratic vector space over a totally real number field $F$. The lift takes values in the space of $(1,1)$-currents on the Shimura variety attached to $GSpin(V)$, and we prove that its values are cohomologous to currents given by integration on special divisors against automorphic Green functions. In the second part to this paper, we will show how to evaluate the regularized theta lift on differential forms obtained as usual (non-regularized) theta lifts.
\end{abstract}
\maketitle
\setcounter{tocdepth}{2}
\tableofcontents


%
%
%
%

\newpage

\section{Introduction} \label{section:Introduction} \subsection{Background and main results} The theory of the theta correspondence provides one of the most powerful tools to construct automorphic forms on classical groups. In recent years, the work of many authors has led to a geometric version of this theory describing the behaviour of various spaces of so-called special cycles. Namely, the arithmetic quotients of symmetric spaces attached to classical groups $SO(p,q)$ and $U(p,q)$ are equipped with a large collection of cycles coming from the subgroups that fix a given rational subspace; these are generally known as special cycles. After the work of \citet{KudlaMillson1,KudlaMillson2,KudlaMillson3} constructing theta functions that represent their Poincar\'e dual forms, it has become clear that their cohomological properties are very closely connected with the theta correspondence; see e.g. \citep{KudlaOrthogonal} for a description of their cup products and intersection numbers for the group $SO(n,2)$.

In cases where these arithmetic quotients are naturally quasi-projective algebraic varieties (e.g. for the group $SO(n,2)$ just mentioned), some of these special cycles define complex subvarieties, and it is interesting to ask for more refined properties, such as constructing Green currents for them or describing their image in the appropriate Chow groups. The work of \citet{Borcherds, BorcherdsGKZ} and its generalization by \citet{Bruinier} succesfully addressed these questions for the case of special divisors on arithmetic quotients of $SO(n,2)$. Their construction relies again on the theta correspondence and is based on considering theta lifts with respect to the reductive dual pair $(SL_2,O(V))$. The automorphic forms on $SL_2(\mathbb{A})$ used in their work as an input are not of moderate growth; thus, the integrals defining the theta lifts are not convergent and need to be regularized. With the proper regularization procedure, one can construct Green functions for special divisors, and also meromorphic automorphic forms, as theta lifts.

One might wonder if regularized theta lifts for reductive dual pairs of the form $(Sp_{2n},O(V))$ for $n \geq 2$ can be defined and whether one can construct interesting currents on arithmetic quotients of the symmetric space associated with $SO(V_\mathbb{R})$ in this way. Consider such a quotient $X_\Gamma$ associated with a lattice $\Gamma \subset SO(n,2)$, and let $(Y,f)$ be a pair consisting of a subvariety $Y \subset X_\Gamma$ and a meromorphic function $f \in \mathbb{C}(Y)^\times$. In view of the explicit description of motivic cohomology and regulator maps in terms of higher Chow groups (see e.g. \citep{GoncharovRegulators}), it is interesting to consider the current $\log|f| \cdot \delta_Y$, whose value on a differential form $\alpha \in \mathcal{A}^*_c(X_\Gamma)$ is given by
\begin{equation}
(\log|f|\cdot \delta_Y,\alpha)=\int_Y \log|f| \cdot \alpha.
\end{equation}
The first goal of this paper is to show that, for many pairs $(Y,f)$ such that $Y$ is a special subvariety and $f$ has divisor supported in special cycles, the current $\log|f| \cdot \delta_Y$ can be obtained as a regularized theta lift for $(Sp_4,O(V))$. This follows from \hyperref[thm:Theorem1_currents]{Theorem \ref*{thm:Theorem1_currents}} below. Let us describe more precisely the main objects involved in its statement. 

Let $F$ be a totally real number field and $V$ be a quadratic vector space over $F$. We assume that the signature of $V$ is $((n,2),(n+2,0),\ldots,(n+2,0))$ with $n$ positive and even. Let $H=Res_{F/\mathbb{Q}} GSpin(V)$. Attached to $H$ there is a Shimura variety $X$ of dimension $n$ whose complex points at a finite level determined by a neat open compact subgroup $K \subset H(\mathbb{A}_f)$ are given by
\begin{equation}
X_K=H(\mathbb{Q}) \backslash (\mathbb{D} \times H(\mathbb{A}_f)) /K.
\end{equation}
Here $\mathbb{D}$ denotes the hermitian symmetric space attached to the Lie group $SO(V_\mathbb{R})$. For fixed $K$, the complex manifold $X_K$ is a finite union of arithmetic quotients of the form $X_\Gamma:=\Gamma \backslash \mathbb{D}^+$, where $\mathbb{D}^+$ denotes one of the connected components of $\mathbb{D}$. Consider two vectors $v,w \in V$ spanning a totally positive definite plane in $V$ and write $\Gamma_v$ (resp. $\Gamma_{v,w}$) for the stabilizer of $v$ (resp. of both $v$ and $w$) in $\Gamma$. One can define complex submanifolds $\mathbb{D}_v^+ \subset \mathbb{D}^+$ and $\mathbb{D}_{v,w}^+ \subset \mathbb{D}_v^+$, each of complex codimension one, and holomorphic maps
\begin{equation} \label{eq:cycles_diagram}
\xymatrixcolsep{4pc} \xymatrix{ \mathbb{D}_{v,w}^+ \ar[r] \ar[d] & \mathbb{D}_v^+ \ar[r] \ar[d] & \mathbb{D}^+ \ar[d] \\ X(v,w)_\Gamma=\Gamma_{v,w} \backslash \mathbb{D}_{v,w}^+ \ar[r]^{\qquad \iota} & X(v)_\Gamma = \Gamma_v \backslash \mathbb{D}_v^+ \ar[r]^{\qquad f} & X_\Gamma }
\end{equation}
where the maps in the bottom row are proper and generically one-to-one. In \autoref{subsection:Green_currents} we recall the construction of a function $G(v,w)_\Gamma \in \mathcal{C}^\infty(X(v)_\Gamma-\iota(X(v,w)_\Gamma))$ that is a Green function for the divisor $[\iota(X(v,w)_\Gamma)] \in Div(X(v)_\Gamma)$; this function is locally integrable and hence defines a current $[G(v,w)_\Gamma] \in \mathcal{D}^0(X(v)_\Gamma)$. Define the current
\begin{equation} \label{eq:Phi_vw_current_introduction}
[\Phi(v,w)_\Gamma]=2\pi i \cdot f_*([G(v,w)_\Gamma]) \in \mathcal{D}^{1,1}(X_\Gamma),
\end{equation}
where $f_*: \mathcal{D}^0(X(v)_\Gamma) \rightarrow \mathcal{D}^{1,1}(X_\Gamma)$ denotes the pushforward map. Note that the $\mathbb{Q}$-linear span of the currents $[\Phi(v,w)_\Gamma]$ for varying $w$ and fixed $v$ includes all the currents of the form $2\pi i \cdot \log|f| \cdot \delta_{X(v)_\Gamma}$, where $f \in \mathbb{C}(X(v)_\Gamma)^\times \otimes_\mathbb{Z} \mathbb{Q}$ is one of the meromorphic functions constructed by \citet[Theorem 6.8]{Bruinier}. Given a totally positive definite symmetric matrix $T \in Sym_2(F)$ and a Schwartz function $\varphi \in \mathcal{S}(V(\mathbb{A}_f)^2)$ fixed by $K$, in \autoref{subsection:weighted_currents} we define a current $[\Phi(T,\varphi)_K] \in \mathcal{D}^{1,1}(X_K)$ as a finite sum of currents $[\Phi(v,w)_\Gamma]$ weighted by the values of $\varphi$. As an example, consider the case treated in \autoref{subsection:currents_examples}, where $X_K=X_0^B \times X_0^B$ is a self-product of a full level Shimura curve $X_0^B$ attached to an indefinite quaternion algebra $B$ over $\mathbb{Q}$. Here the currents $[\Phi(T,\varphi)_K]$ admit a description in terms of Hecke correspondences and CM points on $X_K$. Namely, if $p$ is a prime not dividing the discriminant of $B$ such that $p \equiv 1 (mod \ 4)$ and writing $L=\mathbb{Q}[\sqrt{-p}]$, then for a certain choice of $\varphi=\varphi_0$ we have

\begin{equation} \label{eq:intro_example_weighted_current_2}
\left[ \Phi \left( \left( \begin{array}{cc} 1 & \\ & p \end{array} \right) ,\varphi_0\right)_K \right]=2\pi i \cdot (X_0^B \overset{\Delta}{\rightarrow} X_0^B \times X_0^B)_*([G_{t_{L/\mathbb{Q}}[CM(\mathcal{O}_L)]}]).
\end{equation}
where $\Delta$ denotes the diagonal embedding and $G_{t_{L/\mathbb{Q}}[CM(\mathcal{O}_L)]}$ denotes a Green function for the divisor $t_{L/\mathbb{Q}}[CM(\mathcal{O}_L)]$ of points in $X_0^B$ with CM by $\mathcal{O}_L$ (see \eqref{eq:example_weighted_current_2}).

Our first main result will show that the currents $[\Phi(T,\varphi)_K]$ are cohomologous to some currents obtained by a process of regularized theta lifting. Let us now introduce these theta lifts. In \autoref{subsection:current_as_thetalift} we define, for $\varphi \in \mathcal{S}(V(\mathbb{A}_f)^2)$ fixed by $K$ and $g \in Sp_4(\mathbb{A}_F)$, a theta function $\theta(g;\varphi)_K$ valued in the space of smooth $(1,1)$-forms on $X_K$. In the same section, we introduce a function
\begin{equation}
\mathcal{M}_T(s): N(F) \backslash N(\mathbb{A}) \times A(\mathbb{R})^0 \rightarrow \mathbb{C}.
\end{equation}
Here $T$ denotes a totally positive definite symmetric $2$-by-$2$ matrix, $s$ is a complex number, $N \subset Sp_{4,F}$ denotes the unipotent radical of the Siegel parabolic of $Sp_{4,F}$ and $A(\mathbb{R})^0$ denotes the connected component of the identity of the real points of the subgroup $A \subset Sp_{4,F}$ of diagonal matrices in $Sp_{4,F}$. This function grows exponentially along $A(\mathbb{R})^0$. We define the regularized theta lift
\begin{equation}
(\mathcal{M}_T(s),\theta(\cdot,\varphi)_K)^{reg}=\int_{A(\mathbb{R})^0} \int_{N(F)\backslash N(\mathbb{A})} \mathcal{M}_T(na,s) \theta(na,\varphi)_K dn da,
\end{equation}
with appropriate measures $dn$ and $da$.

\begin{theorem} \phantomsection \label{thm:Theorem1_currents}
\begin{enumerate}
\item The regularized integral $(\mathcal{M}_T(s),\theta(\cdot;\varphi)_K)^{reg}$ converges for $Re(s) \gg 0$ on an open dense set of $X_K$ whose complement has measure zero and defines a locally integrable $(1,1)$-form $\Phi(T,\varphi,s)_K$ on $X_K$.
\item Let $\tilde{\mathcal{D}}^{1,1}(X_K)=\mathcal{D}^{1,1}(X_K)/(im(\partial)+im(\overline{\partial}))$. The current $[\Phi(T,\varphi,s)_K] \in \tilde{\mathcal{D}}^{1,1}(X_K)$ defined by $\Phi(T,\varphi,s)_K$ admits meromorphic continuation to $s \in \mathbb{C}$; moreover, its constant term at $s=s_0=(n-1)/2$ satisfies
\[
CT_{s=s_0} [\Phi(T,\varphi,s)_K] = [\Phi(T,\varphi)_K]
\]
as elements of $\tilde{\mathcal{D}}^{1,1}(X_K)$.
\end{enumerate}
\end{theorem}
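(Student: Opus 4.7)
The plan is to begin by Fourier expanding the theta function along the unipotent radical $N$. Under the Weil representation, $N$ acts on $\mathcal{S}(V(\mathbb{A})^2)$ through the standard characters $\psi_{T'}$ indexed by $T' \in Sym_2(F)$, so one can write
$$\theta(na,\varphi)_K = \sum_{T' \in Sym_2(F)} \psi_{T'}(n) \cdot \theta_{T'}(a,\varphi)_K,$$
and $\mathcal{M}_T(\cdot,s)$ is by construction a $\psi_T$-equivariant function on $N$ with explicit $s$-dependent behavior along $A(\mathbb{R})^0$. The inner integral over $N(F) \backslash N(\mathbb{A})$ therefore extracts the $T$-th Fourier coefficient, reducing the regularized lift to a single integral over $A(\mathbb{R})^0 \simeq \mathbb{R}_{>0}^2$ of $\theta_T(a,\varphi)_K(z)$ against an explicit exponentially growing function of $(a,s)$. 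The $T$-th coefficient is itself a finite sum (modulo $K$-equivalence) of Kudla--Millson $(1,1)$-forms attached to pairs $(v,w) \in V^2$ whose Gram matrix equals $T$.

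For part (1), each such summand carries a Gaussian factor in $a$ governed by the majorant of $V_\mathbb{R}$ at $z \in \mathbb{D}^+$. Since $T$ is totally positive definite, this factor decays rapidly as $a \to \infty$ precisely when $z$ does not lie on any of the cycles $\mathbb{D}_v^+$ appearing in the sum, and the decay dominates the growth of $\mathcal{M}_T(\cdot,s)$ once $Re(s) \gg 0$. This gives pointwise convergence on the complement of a finite union of special cycles, and local integrability follows because the resulting singularities along these real-codimension-$2$ cycles are of logarithmic type.

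For part (2), each of the finitely many orbital integrals over $A(\mathbb{R})^0$ can be recast as a double Mellin transform of an explicit function on $\mathbb{R}_{>0}^2$ built from the Weil representation action of $A$. These Mellin transforms extend meromorphically to all of $\mathbb{C}$, with poles controlled by Gamma factors coming from the non-compact directions of the majorant. To identify the constant term at $s_0 = (n-1)/2$, one compares the resulting integral with the integral representation of $G(v,w)_\Gamma$ recalled in \autoref{subsection:Green_currents}: both are Mellin transforms of essentially the same kernel, and the factor $2\pi i$ in \eqref{eq:Phi_vw_current_introduction} arises from translating the real-valued Kudla--Millson form into its $dd^c$-presentation.

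The main obstacle will be promoting the pointwise identification of orbital integrals to an equality of currents in $\tilde{\mathcal{D}}^{1,1}(X_K)$. The meromorphic continuation at $s = s_0$ naturally produces additional terms from boundary contributions near the walls of $A(\mathbb{R})^0$ in the Mellin transform, and these must be recognized as lying in $im(\partial) + im(\overline{\partial})$. The key tool is the Kudla--Millson identity relating $dd^c$ of the Green forms to $\delta$-currents of the corresponding special divisors, combined with a Poincar\'e--Lelong-type analysis that absorbs the excess regularization terms as $\partial$ or $\overline{\partial}$ of locally integrable currents. Achieving this cleanly---without tracking spurious smooth forms that only cancel after descending to $\mathcal{D}^{1,1}/(im(\partial)+im(\overline{\partial}))$---is the most delicate part of the argument.
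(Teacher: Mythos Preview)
Your outline for part (1) is broadly on track---Fourier expansion along $N$ picks out the $T$-th coefficient, and the remaining $A(\mathbb{R})^0$-integral is a sum over $(v,w)\in\Omega_T(F)$---but you have misidentified the archimedean Schwartz form. The form $\varphi^{1,1}_\infty$ in \eqref{eq:def_phi_1,1_infty} is \emph{not} the Kudla--Millson form; it is deliberately built as $\varphi^{1,1}(v,w,z)=\overline{\partial}\bigl(\varphi^0(w,z)\,\partial\varphi^0(v,z)\bigr)$, i.e.\ it is $\overline{\partial}$-exact as a smooth form. The $A(\mathbb{R})^0$-integral of $\mathcal{M}_T(a,s)$ against this form is computed explicitly via the Laplace transform identity \eqref{eq:Laplace_transform} and yields the closed-form expression $\omega(v,w,z,s)=\overline{\partial}\bigl(\phi(w,v,z,s)\,\partial\phi(v,w,z,s)\bigr)$, where $\phi$ is the hypergeometric function \eqref{eq:phi(v,w,z,s)_def}. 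This structural $\overline{\partial}$-exactness is the whole point, and your proposal misses it.

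Consequently your plan for part (2) is off. The meromorphic continuation does not come from Gamma factors in a Mellin transform, and the passage to $\tilde{\mathcal{D}}^{1,1}$ is not about recognizing boundary terms as lying in $im(\partial)+im(\overline{\partial})$. Rather, since $\omega(v,w,z,s)$ is already $\overline{\partial}$-exact as a smooth form, a Stokes argument on $\Gamma_{v,w}\backslash\mathbb{D}^+$ (\autoref{prop:cohomologous_currents}) shows that the associated current differs from the residue term $2\pi i\,\phi(w,v,\cdot,s)\,\delta_{\mathbb{D}_v^+}$ by something in $im(\overline{\partial})$. The key observation is then Lemma~\ref{lem:phi_vwzs}(3): the restriction of $\phi(w,v,z,s)$ to $\mathbb{D}_v$ is exactly the Oda--Tsuzuki secondary spherical function $\phi^{(2)}_{\mathbb{D}_v}(p_{v^\perp}(w),z,s)$. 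Summing over $\Gamma_{v,w}\backslash\Gamma_v$ gives the automorphic Green function $G(v,w,z,s)_\Gamma$ on $X(v)_\Gamma$, whose meromorphic continuation in $s$ and constant term at $s_0$ are already known by the results of Oda--Tsuzuki and Bruinier recalled in \autoref{subsection:Green_currents}. The $2\pi i$ is the Poincar\'e--Lelong residue from Stokes, not a normalization of Kudla--Millson. The Kudla--Millson form and the Bruinier--Funke $dd^c$ identity only enter later, in \autoref{subsection:interchange_integrals}, for the modified currents $\tilde{\Phi}$; they play no role in the proof of this theorem.
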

In fact, \hyperref[prop:weighted_currents_pullback_equivariance]{Proposition \ref*{prop:weighted_currents_pullback_equivariance}} shows that the currents in the theorem are compatible under the maps $\mathcal{D}^{1,1}(X_{K'}) \rightarrow \mathcal{D}^{1,1}(X_K)$ induced from inclusions $K' \subset K$ of open compact subgroups, so that we obtain currents
\begin{equation}
[\Phi(T,\varphi)]= ([\Phi(T,\varphi)_K])_K \in \mathcal{D}^{1,1}(X):= \varprojlim_K \mathcal{D}^{1,1}(X_K)
\end{equation}
and similarly $[\Phi(T,\varphi,s)] \in \mathcal{D}^{1,1}(X)$ that agree on closed differential forms.

A particularly interesting subspace of $\mathcal{D}^{1,1}(X_K)$ is the image of the regulator map
\begin{equation}
r_{\mathcal{D}}: CH^2(X_K,1) \rightarrow \mathcal{D}^{1,1}(X_K) 
\end{equation}
whose definition we recall in Section \ref{subsection:higher_chow_groups}; in particular, we would like to characterise the currents $[\Phi_K]$ in the $\mathbb{Q}$-linear span of the currents $[\Phi(T,\varphi)_K]$ that belong to the image of $r_{\mathcal{D}}$. We will prove in \autoref{prop:higher_chow_currents} that, when $\dim X_K \geq 4$, we have for such a current $\Phi_K$:
\begin{equation}
[\Phi_K] \in r_{\mathcal{D}} \Leftrightarrow dd^c [\Phi_K]=0.
\end{equation}

Once the currents $[\Phi(T,\varphi)]$ have been constructed, we would like to evaluate them on differential forms $\alpha \in \mathcal{A}_c^{n-1,n-1}(X_K)$. Let us assume from now on that $V$ is anisotropic over $F$; this implies that $X_K$ is compact. Since the form $\Phi(T,\varphi,s)_K$ is obtained as a (regularized) integral, it is natural to try to do so by interchanging the integrals. However, the regularized integral is not absolutely convergent, and the exchange is not justified. To get around this problem, we introduce some locally integrable $(1,1)$-forms $\tilde{\Phi}(T,\varphi,s)_K$ related to the $\Phi(T,\varphi,s)_K$ in Theorem \ref*{thm:Theorem1_currents}. They are also obtained as regularized theta lifts and the associated currents $[\tilde{\Phi}(T,\varphi,s)_K]$ are compatible under the maps induced by inclusions $K' \subset K$, thus defining a current $[\tilde{\Phi}(T,\varphi,s)] \in \mathcal{D}^{1,1}(X)$. As before, these currents enjoy a property of meromorphic continuation to $s \in \mathbb{C}$, and their constant terms satisfy
\begin{equation} \label{eq:constant_term_intro}
CT_{s=s_0} [\tilde{\Phi}(T_1,\varphi_1,s)]-[\tilde{\Phi}(T_2,\varphi_2,s)] \equiv [\Phi(T_1,\varphi_1)]-[\Phi(T_2,\varphi_2)]
\end{equation}
modulo $im(\partial)+im(\overline{\partial})$ for pairs $(T_1,\varphi_1)$, $(T_2,\varphi_2)$ related by a certain involution $\iota$ (see \eqref{eq:def_iota_involution}). Here, at a finite level $K$, the current on the right hand side is a finite sum of currents of the form $[\Phi(v,w)_\Gamma]-[\Phi(w,v)_\Gamma]$ with $[\Phi(v,w)_\Gamma]$ given by \eqref{eq:Phi_vw_current_introduction}; see Remark \ref{remark:why_differences} for some motivation on these currents. Moreover, using ideas of \citet{BruinierFunke}, we show that the values $[\tilde{\Phi}(T,\varphi,s)_K](\alpha)$ for large $Re(s)$ can be computed by reversing the order of integration; the precise statement is the following.

\noindent {\bf Proposition \ref{prop:interchange_integrals}}. Let $K \subset H(\mathbb{A}_f)$ be an open compact subgroup that fixes $\varphi$ and let $\alpha \in \mathcal{A}_c^{n-1,n-1}(X_K)$. Then, for $Re(s) \gg 0$, we have
\[
([\tilde{\Phi}(T,\varphi,s)_K],\alpha)=\int_{A(\mathbb{R})^0} \int_{N(F)\backslash N(\mathbb{A})}\widetilde{\mathcal{M}}_T(na,s) \int_{X_K} \theta(na;\varphi \otimes \tilde{\varphi}_\infty)_K \wedge \alpha \ dn da.
\]

This result also gives information on the values of the currents $[\Phi(T,\varphi)]$; see \autoref{cor:interchange_integrals}. It will be used in Part II of this series to relate the values of the currents constructed in this paper to special values of standard L-functions of automorphic representations of $Sp_{4,F}$.

\subsection{Outline of the paper} We now describe the contents of each section in more detail. \autoref{section:Shimura_varieties} is a review of definitions and basic facts about Shimura varieties $X$ attached to $GSpin$ groups. In it we recall the definition of the relevant Shimura datum, describe the connected components of $X_K$ at a finite level $K$ and introduce the tautological line bundle $\mathcal{L}$ and its canonical metric. Then we recall the definition of special cycles in $X_K$ and their weighted versions introduced by Kudla.

In \autoref{section:Currents_theta_lifts} we construct currents in $\mathcal{D}^{1,1}(X_K)$. Sections \ref{subsection:secondary_spherical_functions} and \ref{subsection:Green_currents} first review previous work by Oda, Tsuzuki and Bruinier on secondary spherical functions on the symmetric space $\mathbb{D}$ attached to $SO(n,2)$, and on automorphic Green functions for special divisors on arithmetic quotients $\Gamma \backslash \mathbb{D}^+$ (here $\mathbb{D}^+$ denotes one of the connected components of $\mathbb{D}$). In Section \ref{subsection:functions_phi_vwzs} we introduce some differential forms with singularities on $\mathbb{D}$. These forms depend on a complex parameter $s$ and are used in Section \ref{subsection:currents_in_X_Gamma} to define $(1,1)$-forms on $\Gamma \backslash \mathbb{D}^+$ with singularities on special divisors. We prove that these $(1,1)$-forms are locally integrable and therefore define currents in $\mathcal{D}^{1,1}(\Gamma \backslash \mathbb{D}^+)$. Section \ref{subsection:properties_Phi_vwhs_Gamma} then shows that these currents admit meromorphic continuation to $s \in \mathbb{C}$ and that their regularized value at a certain value $s_0$ is cohomologous to the pushforward of the automorphic Green function in Section \ref{subsection:Green_currents} defined on a certain special divisor. An adelic formulation of the above constructions is provided in Section \ref{subsection:currents_X_K}. After this, in Section \ref{subsection:weighted_currents}, we introduce weighted currents; their behaviour under pullbacks induced by inclusions of open compact subgroups $K' \subset K$ and under the Hecke algebra of the $GSpin$ group is described. Section \ref{subsection:current_as_thetalift} explains how these weighted currents can be constructed as regularized theta lifts for the dual pair $(Sp_4,O(V))$. In Section \ref{subsection:higher_chow_groups} we give a necessary and sufficient condition for the currents above to belong to the image of the regulator map from the higher Chow group $CH^2(X_K,1)$. Section \ref{subsection:interchange_integrals} introduces some related currents on $X_K$ and uses their presentation as regularized theta lifts to prove that they can be evaluated on differential forms by interchanging the order of integration.

The example of a product of Shimura curves described above is considered in \autoref{section:Example_Products_Shimura_curves}. This section starts with some definitions and basic facts on Shimura curves in Section \ref{subsection:quaternion_algs_Shim_curves}. In Section \ref{subsection:currents_examples}, we describe several of the currents introduced in Section \ref{section:Currents_theta_lifts} in terms of Hecke correspondences and CM divisors. 

\subsection{Notation} \label{subsection:notation} The following conventions will be used throughout the paper.

\begin{itemize}
\item We write $\hat{\mathbb{Z}}=\varprojlim_{n}(\mathbb{Z}/n\mathbb{Z})$ and $\hat{M}=M \otimes_\mathbb{Z} \hat{\mathbb{Z}}$ for any abelian group $M$. We write $\mathbb{A}_f=\mathbb{Q} \otimes_\mathbb{Z} \hat{\mathbb{Z}}$ for the finite adeles of $\mathbb{Q}$ and $\mathbb{A}=\mathbb{A}_f \times \mathbb{R}$ for the full ring of adeles. 

\item For a number field $F$, we write $\mathbb{A}_F= F \otimes_\mathbb{Q} \mathbb{A}$, $\mathbb{A}_{F,f}=F \otimes_\mathbb{Q} \mathbb{A}_f$ and $F_\infty=F \otimes_\mathbb{Q} \mathbb{R}$. We will suppress $F$ from the notation if no ambiguity can arise.

\item For a finite set of places $S$ of $F$, we will denote by $\mathbb{A}_S$ (resp. $\mathbb{A}^S$) the subset of adeles in $\mathbb{A}_F$ supported on $S$ (resp. away from $S$).

\item We denote by $\psi_\mathbb{Q}=\otimes_v \psi_{\mathbb{Q}_v}:\mathbb{Q} \backslash \mathbb{A}_\mathbb{Q} \rightarrow \mathbb{C}^\times$ the standard additive character of $\mathbb{A}_\mathbb{Q}$, defined by
\begin{equation*}
\begin{split}
\psi_{\mathbb{Q}_p}(x)&=e^{-2\pi i x}, \text{ for } x \in \mathbb{Z}[p^{-1}]; \\ 
\psi_{\mathbb{R}}(x)&=e^{2\pi i x}, \text{ for } x \in \mathbb{R}.
\end{split}
\end{equation*}
If $F_v$ is a finite extension of $\mathbb{Q}_v$, we set $\psi_v=\psi_{\mathbb{Q}_v}(tr(x))$, where $tr:F_v \rightarrow \mathbb{Q}_v$ is the trace map. For a number field $F$, we write $\psi=\otimes_v \psi_v: F \backslash \mathbb{A}_F \rightarrow \mathbb{C}^\times$ for the resulting additive character of $\mathbb{A}_F$.

\item For a locally compact, totally disconnected topological space $X$, the symbol $\mathcal{S}(X)$ denotes the Schwartz space of locally constant, compactly supported functions on $X$. For $X$ a finite dimensional vector space over $\mathbb{R}$, the symbol $\mathcal{S}(X)$ denotes the Schwartz space of all $\mathcal{C}^\infty$ functions on $X$ all whose derivatives are rapidly decreasing.

\item For a ring $R$, we denote by $Mat_n(R)$ the set of all $n$-by-$n$ matrices with entries in $R$. The symbol $1_n$ (resp. $0_n$) denotes the identity (resp. zero) matrix in $Mat_n(R)$.

\item For a matrix $x \in Mat_n(R)$, the symbol $^t x$ denotes the transpose of $x$. We denote by $Sym_n(R)=\{x \in Mat_n(R)|x=^t x \}$ the set of all symmetric matrices in $Mat_n(R)$.

\item The symbol $X \coprod Y$ denotes the disjoint union of $X$ and $Y$.

\item If an object $\phi(s)$ depends on a complex parameter $s$ and is meromorphic in $s$, we denote by $CT_{s=s_0}\phi(s)$ the constant term of its Laurent expansion at $s=s_0$.

\end{itemize}

\subsection{Acknowledgments} Most of the work on this paper and its sequel was done during my Ph.D. at Columbia University, and this work essentially constitutes my Ph.D. thesis. I would like to express my deep gratitude to my advisor Shou-Wu Zhang, for introducing me to this area of mathematics, for his guidance and encouragement and for many very helpful suggestions. I would also like to thank Stephen S. Kudla for answering my questions about the theta correspondence and for several very inspiring remarks and conversations. This paper has also benefitted from comments and discussions with Patrick Gallagher, Yifeng Liu, Andr\'e Neves, Ambrus P\'al, Yiannis Sakellaridis and Wei Zhang; I am grateful to all of them.

\section{Shimura varieties and special cycles} \label{section:Shimura_varieties}


\subsection{Shimura varieties} We recall the facts about orthogonal Shimura varieties that we will need. We follow \citet{KudlaOrthogonal} closely, to which the reader is referred for further details. Let $F$ be a totally real number field of degree $d$ with embeddings $\sigma_i :F \rightarrow \mathbb{R}$, $i=1,\ldots,d$. Let $(V,Q)$ a quadratic vector space over $F$ of dimension $n+2$ (with $n \geq 1$); we assume that $V_1=V \otimes_{F,\sigma_1}\mathbb{R}$ has signature $(n,2)$ and that $V_{\sigma_i}=V \otimes_{F,\sigma_i} \mathbb{R}$ is positive definite for $i=2,\ldots,d$.

Let $H=Res_{F/\mathbb{Q}} GSpin(V)$. The group $H$ fits into a short exact sequence
\begin{equation}
1 \rightarrow Res_{F/\mathbb{Q}}\mathbb{G}_m  \rightarrow H \rightarrow Res_{F/\mathbb{Q}}SO(V) \rightarrow 1.
\end{equation}
Denote by $\mathbb{D}$ the set of oriented negative definite planes in $V_1$. We will fix once and for all a point $z_0 \in \mathbb{D}$ and will denote by $\mathbb{D}^+$ the connected component of $\mathbb{D}$ containing $z_0$. The group $SO(V_1) \cong SO(n,2)$ acts transitively on $\mathbb{D}$, and the stabilizer $K_{z_0}$ of $z_0$ is isomorphic to $SO(n)\times SO(2)$. We have
\begin{equation}
\mathbb{D} \cong SO(n,2)/(SO(n) \times SO(2)).
\end{equation}
To the pair $(H,\mathbb{D})$ one can attach a Shimura variety $Sh(H,\mathbb{D})$ that has a canonical model over $\sigma_1(F)$. Namely, in \citep[p. 44]{KudlaOrthogonal} a homomorphism
\begin{equation}
h_0:Res_{\mathbb{C} / \mathbb{R}} \mathbb{G}_m = \mathbb{C}^\times \rightarrow H(\mathbb{R})=\prod_{i=1,\ldots,d} GSpin(V_{\sigma_i})
\end{equation}
is defined such that $\mathbb{D}$ becomes identified with the space of conjugates of $h_0$ by $H(\mathbb{R})$; the resulting action of $H(\mathbb{R})$ on $\mathbb{D}$ factors through the projection $H(\mathbb{R}) \rightarrow SO(V_1)$. For any compact open subgroup $K \subset H(\mathbb{A}_f)$, we have
\begin{equation} \label{eq:X_K_def}
X_K=Sh(H,\mathbb{D})_K(\mathbb{C})=H(\mathbb{Q}) \backslash (\mathbb{D} \times H(\mathbb{A}_f))/K.
\end{equation}
Thus $X_K$ is the complex analytification of a quasi-projective variety $Sh(H,\mathbb{D})_K$ of dimension $n$ defined over $\sigma_1(F)$. If $V$ is anisotropic over $F$, then $Sh(G,\mathbb{D})_K$ is actually projective.

We recall the description of the connected components of $X_K$. Let $H^{der}\cong Res_{F/\mathbb{Q}} Spin(V)$ be the derived subgroup of $H$. There is an exact sequence
\begin{equation}
1 \rightarrow H^{der} \rightarrow H \overset{\nu}{\rightarrow} T \rightarrow 1
\end{equation}
where $T = Res_{F/\mathbb{Q}} \mathbb{G}_m$ and $\nu$ is given by the spinor norm. Let $T(\mathbb{R})^+=(\mathbb{R}_{>0})^d \subset T(\mathbb{R})$ and $H_{+}(\mathbb{R})=\nu^{-1}(T(\mathbb{R})^+)$ be the set of elements of $H(\mathbb{R})$ of totally positive spinor norm; this is the subgroup of $H(\mathbb{R})$ stabilizing $\mathbb{D}^+$. Define
\begin{equation}
H_+(\mathbb{Q})=H(\mathbb{Q}) \cap H_+(\mathbb{R}).
\end{equation}
By the strong approximation theorem, we can find $h_1=1,\ldots,h_r \in H(\mathbb{A}_f)$ such that
\begin{equation}
H(\mathbb{A}_f)=\coprod_{j=1}^r H_+(\mathbb{Q}) h_j K.
\end{equation}
For $j=1,\ldots,r$, let $\Gamma_{h_j}=H_+(\mathbb{Q}) \cap h_jKh_j^{-1}$. Then
\begin{equation}\label{eq:connected_components_X_K}
X_K \cong \coprod_{j=1}^r \Gamma_{h_j} \backslash \mathbb{D}^+.
\end{equation}
We will also need to consider Shimura varieties attached to $(V,Q)$ as above with $n=0$. In this case, the symmetric domain associated with $SO(V_1)$ consists of just one point, while $\mathbb{D}=\mathbb{D}^+ \coprod \mathbb{D}^-$ consists of two points (corresponding to two different orientations of the same negative definite plane $z_0$). Since it turns out to be more convenient for our purposes, we define $X_K$ as in \eqref{eq:X_K_def} and $Sh(H,\mathbb{D})_K$ to be the union of two copies of the usual Shimura variety attached to $H$, so that with these notations we have $X_K=Sh(H,\mathbb{D})_K(\mathbb{C})$.

For $n \geq 1$, we can introduce a different model for $\mathbb{D}$ that makes the presence of a $SO(V_1)$-invariant complex structure obvious. Let $\mathcal{Q}$ be the quadric in $\mathbb{P}(V_1(\mathbb{C}))$ given by
\begin{equation}
\mathcal{Q}=\{v \in \mathbb{P}(V_1(\mathbb{C}))| (v,v)=0\}.
\end{equation}
Note that if $\{v_1,v_2\}$ is an orthogonal basis of $z \in \mathbb{D}$ with $(v_1,v_1)=(v_2,v_2)=-1$, then $v:=v_1 - i v_2 \in V_1 \otimes \mathbb{C}$ satisfies $(v,v)=0$ and $(v, \overline{v})<0$. Moreover, the line $[v]:=\mathbb{C}\cdot v$ is independent of the orthogonal basis we have chosen. Thus we obtain a well defined map $\mathbb{D} \rightarrow \mathcal{Q}$ and one checks that it gives an isomorphism
\begin{equation}
\mathbb{D} \rightarrow \mathcal{Q}_-=\{w \in \mathbb{P}(V_{\sigma_1}(\mathbb{C}))| (w,w)=0, \ (w,\overline{w})<0 \}
\end{equation}
onto the open subset $\mathcal{Q}_-$ of the quadric $\mathcal{Q}$.

Consider the tautological line bundle $\mathcal{L}$ over $\mathcal{Q}_-$ defined by
\begin{equation}
\mathcal{L} \ \backslash \ \{0\}:=\{w \in V_1(\mathbb{C}) | (w,w)=0, \ (w,\overline{w})<0 \}.
\end{equation}
The action of $H(\mathbb{R})$ on $\mathbb{D}$ lifts naturally to $\mathcal{L}$ and gives it the structure of a $H(\mathbb{R})$-equivariant bundle. Any element $v \in V_1$ defines a section $s_v$ of $\mathcal{L}^\vee$ by the rule $s_v(w)=(v,w)$. We will only consider $s_v$ for $v$ of positive norm. The section $s_v$ defines an analytic divisor
\begin{equation}
div(s_v)=\{w \in \mathbb{P}(V_1(\mathbb{C}))| (v,w)=0 \}
\end{equation}
Under the isomorphism $\mathbb{D} \cong \mathcal{Q}_-$ described above, $div(s_v)$ corresponds to $\mathbb{D}_v \subset \mathbb{D}$, where $\mathbb{D}_v$ denotes the set of negative definite planes in $V_1$ that are orthogonal to $v$.

The line bundle $\mathcal{L}$ carries a natural hermitian metric $||\cdot||$ defined by $||w||^2=|(w,\overline{w})|$; this metric is $H(\mathbb{R})$-equivariant. We say that a function $f \in \mathcal{C}^\infty(\mathbb{D}-\mathbb{D}_v)$ has a logarithmic singularity along $\mathbb{D}_v$ if $f(z)-\log||s_v(z)||^2$ extends to $\mathcal{C}^\infty(\mathbb{D})$.

\subsection{Special cycles}\label{subsection:special_cycles} Let $U \subset V$ be a totally positive definite subspace and let $W$ be its orthogonal complement in $V$. Denote by $H_U$ the pointwise stabilizer of $U$ in $H$. Then $H_U \cong Res_{F / \mathbb{Q}} GSpin(W)$; its associated symmetric domain can be identified with $\mathbb{D}_U \cap \mathbb{D}^+$, where $\mathbb{D}_U$ denotes the subset of $\mathbb{D}$ consisting of planes $z$ that are orthogonal to $U$. For a compact open $K \subset H(\mathbb{A}_f)$ and $h \in H(\mathbb{A}_f)$, let $K_{U,h}=H_U(\mathbb{A}_f) \cap hKh^{-1}$, an open compact subset of $H_U(\mathbb{A}_f)$. Define
\begin{equation} \label{eq:def_X(U,h)_K}
X(U,h)_K=H_U(\mathbb{Q})\backslash (\mathbb{D}_U \times H_U(\mathbb{A}_f))/K_{U,h}.
\end{equation}
If $h=1$, we write $X(U)_K:=X(U,1)_K$. Thus $X(U,h)_K$ is the set of complex points of a variety $Sh(H_U,\mathbb{D}_U)_{K_{U,h}}$ defined over $\sigma_1(F)$. There is a morphism
\begin{equation}
i_U: Sh(H_U,\mathbb{D}_U) \rightarrow Sh(H,\mathbb{D})
\end{equation}
defined over $\sigma_1(F)$; on complex points it induces a map
\begin{equation} \label{eq:def_i_U,h,K}
i_{U,h,K}: X(U,h)_K \rightarrow X_K
\end{equation}
that is proper and birational onto its image. Denote by $Z(U,h)_K$ the associated effective cycle on $X_K$. For a set of vectors $x=(x_1,\ldots,x_r) \in V^r$ spanning a totally positive definite vector space $U$ of dimension $r$, we will write $Z(x,h)_K$ for $Z(U,h)_K$.

For a description of the connected components of these special cycles, see \citep[Sections \textsection 3, \textsection 4]{KudlaOrthogonal}; the main result is that these cycles have a finite number of components of the form $Z(U,h)_\Gamma$ that we now define. For $h \in H(\mathbb{A}_f)$, let $\Gamma_h=H_+(\mathbb{Q})\cap hKh^{-1}$. Define $\Gamma_{U,h}=\Gamma_h \cap H_U(\mathbb{R})$ and consider the map
\begin{equation} \label{eq:connected_special_divisor}
X(U,h)_\Gamma:=\Gamma_{U,h}\backslash \mathbb{D}^+_U \rightarrow \Gamma_h \backslash \mathbb{D}^+=X_{\Gamma_h}.
\end{equation}
(For $h=1$, we will just write $X(U)_\Gamma$ for $X(U,1)_\Gamma$). The image defines a connected cycle in $X_{\Gamma_h}$ that we denote by $Z(U,h)_\Gamma$.

In \citep{KudlaOrthogonal}, certain weighted sums of these cycles are defined. Namely, let $r=dim_F U$ and denote by $Sym_r(F)_{>0}$ the space of totally positive definite $r$-by-$r$ matrices with coefficients in $F$. For $T \in Sym_r(F)_{>0}$ and $\varphi \in \mathcal{S}(V(\mathbb{A}_f)^r)^K$ with values in a ring $R$, define
\begin{equation}
Z(T,\varphi)_K=\sum_{h \in H_U(\mathbb{A}_f) \backslash H(\mathbb{A}_f)/K} \varphi(h^{-1}x) Z(x,h)_K,
\end{equation}
where $x=(x_1,\ldots,x_r) \in V^r$ is any vector with $\frac{1}{2}(x_i,x_j)=T$ (if no such $x$ exists, we set $Z(T,\varphi)=0$). Note that the sum is finite and hence defines a cycle in $Z^k(X_K) \otimes_\mathbb{Z} R$.

\section{Currents and regularized theta lifts} \label{section:Currents_theta_lifts}
In this section we introduce some differential forms and currents on arithmetic quotients of $\mathbb{D}^+$. Some of these forms will be defined as Poincar\'e series by summation of $\Gamma$-translates of a differential form on $\mathbb{D}^+$. Here and throughout this paper, $\Gamma \subset H_+(\mathbb{R})$ denotes a group of the form $\Gamma=H_+(\mathbb{Q})\cap K$, where $K \subset H(\mathbb{A}_f)$ is some neat open compact subgroup. If $U \subset V$ is a totally positive definite subspace, we will write $\Gamma_U=\Gamma \cap H_U(\mathbb{R})$, where $H_U$ denotes the pointwise stabilizer of $U$ in $H$. If $U$ is spanned by vectors $v_1,\ldots,v_r$, we will sometimes write $\Gamma_{v_1,\ldots,v_r}$ for $\Gamma_U$.

Several currents defined in this Section will be described explicitly in \autoref{subsection:currents_examples}, where we consider the particular case when $X_K$ is a product of Shimura curves. The description given there is in terms of Hecke correspondences and CM points, and the reader is advised to study the examples given there to understand the definitions and properties to follow.

%
%
%
%
%

\subsection{Secondary spherical functions on $\mathbb{D}$} \label{subsection:secondary_spherical_functions}

Recall that $\mathbb{D}$ denotes the set of oriented, negative definite $2$-planes in $V_1=V \otimes_{F,\sigma_1}\mathbb{R}$. For every vector $v \in V_1$ of positive norm we have defined an analytic divisor $\mathbb{D}_v \subset \mathbb{D}$ consisting of those $z \in \mathbb{D}$ that are orthogonal to $v$. Denote by $H_v(\mathbb{R})$ the stabilizer of $v$ in $H(\mathbb{R})$. Then we have $\mathbb{D}_v \cong H_v(\mathbb{R})/(K \cap H_v(\mathbb{R}))$, so that $\mathbb{D}_v$ can be identified with the hermitian symmetric space associated with $H_v(\mathbb{R})$. We write $\mathbb{D}_v^+:=\mathbb{D}_v \cap \mathbb{D}^+$.

We recall some of the main results of \citet{OdaTsuzuki} concerning the existence and main properties of secondary spherical functions on $\mathbb{D}$. To state these results, we need to introduce certain subgroups of $G=SO(V_1)$. Let $\{v_1,\ldots, v_{n+2}\}$ be a basis of $V_1$ whose quadratic form is $I_{n,2}$ and such that $v=v_1$. Let $z_0=\langle v_{n+1},v_{n+2} \rangle$ and denote by $K_{z_0}$ the stabilizer of $z_0$ in $SO(V_1)^+$. Let $W \subset V_1$ be the plane generated by $v_1$ and $v_{n+1}$ and let $A=SO(W)^0$ be the identity component of its orthogonal group. Then $A=\{a_t |t \in \mathbb{R}\}$ where $a_tv_1=\cosh(t)v_1 + \sinh(t)v_{n+1}$. Let
\begin{equation}
A^+=\{a_t |t \geq 0\}
\end{equation}
and $G_v$ be the stabilizer of $v$ in $G$. Then there is a double coset decomposition
\begin{equation}
G=G_v A^+ K_{z_0}.
\end{equation}

\begin{proposition}\citep[Prop. 2.4.2]{OdaTsuzuki} Let $\Delta_\mathbb{D}$ be the invariant Laplacian on $\mathbb{D}$ and let $\rho_0=n/2$. Let $s$ be a complex number with $Re(s)>\rho_0$. There exists a unique function $\phi^{(2)}(v,z,s) \in \mathcal{C}^\infty(\mathbb{D}-\mathbb{D}_v)$ with the following properties:
\begin{enumerate}
\item $\Delta_\mathbb{D} \phi^{(2)}(v,z,s)=(s^2-\rho_0^2)\phi^{(2)}(v,z,s)$.
\item $\phi^{(2)}(v,gz,s)=\phi^{(2)}(v,z,s)$ for every $g \in G_v$.
\item Consider the function $\phi^{(2)}(v,g,s)=\phi^{(2)}(v,gz_0,s)$ for $g \in G$. It belongs to $\mathcal{C}^\infty(G-G_vK_{z_0})$ and satisfies $\phi^{(2)}(v,g' g k,s)=\phi^{(2)}(v,g,s)$ for every $g' \in G_v$, $k \in K_{z_0}$. Writing $G=G_v A^+ K_{z_0}$ as above, we have
\begin{equation*}
\phi^{(2)}(v,a_t,s)=log(t)+O(1) \text{ as } t \rightarrow 0,
\end{equation*}
\begin{equation*}
\phi^{(2)}(v,a_t,s)=O(e^{-(Re(s)+\rho_0)t}) \text{ as } t \rightarrow +\infty.
\end{equation*}
\end{enumerate} 
\end{proposition}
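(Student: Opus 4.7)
The plan is to reduce the problem to an ordinary differential equation on $A^+$ via the double coset decomposition $G = G_v A^+ K_{z_0}$. Condition (2), together with the $K_{z_0}$-invariance built into the descent through the base point $z_0$, forces $\phi^{(2)}(v,z,s)$ to be determined by its restriction to the slice $\{a_t z_0 : t > 0\}$. Setting $f(t) = \phi^{(2)}(v, a_t z_0, s)$, the singular locus $\mathbb{D}_v$ meets this slice precisely at $t = 0$ (since $z_0 = \langle v_{n+1}, v_{n+2} \rangle$ is orthogonal to $v = v_1$), so the logarithmic singularity in (3) corresponds to a regular singular point of the resulting ODE.

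First I would compute the radial part of $\Delta_{\mathbb{D}}$ along $A^+$ using Harish-Chandra's formula for the action of the Casimir on a $G_v$-bi-invariant function on $G/K_{z_0}$. For the rank-one symmetric pair $(G, G_v)$ the restricted roots of $A$ have multiplicities determined by $\dim V_1 = n + 2$, and the radial operator takes the form
\[
L = \frac{d^2}{dt^2} + c(t)\frac{d}{dt},
\]
where $c(t)$ is a specific combination of $\coth(t)$ and $\tanh(t)$. Condition (1) becomes $Lf = (s^2 - \rho_0^2) f$, a Jacobi/hypergeometric equation whose two-dimensional solution space admits an explicit basis in terms of ${}_2F_1$ (equivalently, associated Legendre functions). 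For $\mathrm{Re}(s) > \rho_0$, Frobenius analysis at $t = +\infty$ distinguishes exactly one solution $f_s$ with the decay $f_s(t) = O(e^{-(\mathrm{Re}(s) + \rho_0) t})$; the other independent solution grows like $e^{(\mathrm{Re}(s) - \rho_0) t}$. Frobenius analysis at the regular singular point $t = 0$ then shows $f_s(t) = A_s \log(t) + h_s(t)$ with $h_s$ smooth near $0$ and $A_s \neq 0$; rescale by $1/A_s$ to achieve the normalization in (3).

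Having fixed $f$, define $\phi^{(2)}(v, z, s)$ on $\mathbb{D} - \mathbb{D}_v$ by transport: write $z = g_v a_t k z_0$ with $g_v \in G_v$, $k \in K_{z_0}$, and set $\phi^{(2)}(v, z, s) = f(t)$. Well-definedness follows from the $G_v \times K_{z_0}$-invariance of $t$ on the open dense orbit, smoothness on $\mathbb{D} - \mathbb{D}_v$ follows from the smoothness of the map $G \to \mathbb{D}$ and of $f$ for $t > 0$, and the three properties hold by construction. For uniqueness, any other such function has a radial part satisfying the same eigenvalue ODE with the same decay at $\infty$, hence equals $\lambda f$ for a scalar $\lambda$; the normalization of the logarithmic singularity at $t = 0$ forces $\lambda = 1$.

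The main obstacle is the asymptotic analysis at $t = +\infty$: the uniqueness of the decaying solution is exactly the reason one needs $\mathrm{Re}(s) > \rho_0$ (outside this range both fundamental solutions at infinity have comparable size and the decay condition no longer singles out a one-dimensional space). Computing the connection coefficients that relate the Frobenius bases at $t = 0$ and $t = +\infty$—so as to verify $A_s \neq 0$ and thereby obtain a solution with \emph{both} the prescribed singularity and the prescribed decay—is the technical heart of the argument; this is where the explicit hypergeometric structure of the Jacobi equation is essential.
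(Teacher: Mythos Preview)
The paper does not supply its own proof of this proposition: it is quoted verbatim from \citep[Prop.~2.4.2]{OdaTsuzuki}, and immediately afterwards the paper simply records the explicit hypergeometric formula
\[
\phi^{(2)}(v,z,s)=-\tfrac{1}{2}\frac{\Gamma(\frac{s+\rho_0}{2})\Gamma(\frac{s-\rho_0}{2}+1)}{\Gamma(s+1)}\left(\frac{Q(v)}{Q(v_{z^\perp})}\right)^{\frac{s+\rho_0}{2}} F\!\left(\tfrac{s+\rho_0}{2},\tfrac{s-\rho_0}{2}+1,s+1,\tfrac{Q(v)}{Q(v_{z^\perp})}\right)
\]
that results from the analysis. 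Your sketch is the standard argument (and essentially the one in Oda--Tsuzuki): reduce via $G=G_vA^+K_{z_0}$ to a second-order ODE in the radial variable, recognise it as a hypergeometric equation, and select the unique solution with the prescribed decay at $t\to\infty$ using the hypothesis $\mathrm{Re}(s)>\rho_0$. The explicit formula above is exactly the output of that procedure, so your plan matches the cited reference; there is nothing further to compare against in the present paper.
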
 
It follows that $\phi^{(2)}(hv,hz,s) = \phi^{(2)}(v,z,s)$ for all $h \in H(\mathbb{R})$ and $z \in \mathbb{D}$. For a totally positive vector $v \in V(F)$, we will simply write $\phi^{(2)}(v,z,s)$ for $\phi^{(2)}(v_1,z,s)$, where $v_1$ denotes the image of $v$ in $V_1$. We will sometimes write $\phi^{(2)}_{\mathbb{D}}(v,z,s)$ for $\phi^{(2)}(v,z,s)$ if we need to be precise about the domain of definition.

The function $\phi^{(2)}(v,z,s)$ admits an explicit description in terms of the Gaussian hypergeometric function. Namely, for $|z|<1$, let $F(a,b,c,z)$ be the function given by
\begin{equation*}
F(a,b,c,z)=\sum_{n=0}^\infty \frac{(a)_n (b)_n}{(c)_n}\frac{z^n}{n!},
\end{equation*}
where we write $(a)_0=1$ and $(a)_n=\Gamma(a+n)/\Gamma(a)$ for $n \geq 1$. For a vector $v \in V_1$ and a plane $z \in \mathbb{D}$, denote by $v_{z^\perp}$ the projection of $v$ to the orthogonal complement $z^\perp$ of $z$ in $V_1$. Then (\citep[(2.5.3)]{OdaTsuzuki}):
\begin{equation}
\begin{split}
\phi^{(2)}(v,z,s) & =-\frac{1}{2}\frac{\Gamma \left(\frac{s+\rho_0}{2}\right)\Gamma \left(\frac{s-\rho_0}{2}+1\right)}{\Gamma(s+1)}\cdot \left(\frac{Q(v)}{Q(v_{z^\perp})} \right)^\frac{s+\rho_0}{2} \\
 & \quad \cdot F \left(\frac{s+\rho_0}{2},\frac{s-\rho_0}{2}+1,s+1,\frac{Q(v)}{Q(v_{z^\perp})}\right).
\end{split}
\end{equation}
\subsection{Green currents for special divisors} \label{subsection:Green_currents} The functions $\phi^{(2)}(v,z,s)$ can be used to construct Green functions for the special divisors introduced above. Namely, let $\Gamma \subset H(\mathbb{R})$ be of the form $\Gamma=H_+(\mathbb{Q}) \cap K$ and $v \in V(F)$ be a vector of totally positive norm. Recall that we write $\Gamma_v=\Gamma \cap H_v(\mathbb{R})$. For $Re(s)>\rho_0$, define
\begin{equation}
G(v,z,s)_\Gamma= 2 \cdot \sum_{\gamma \in \Gamma_v \backslash \Gamma}\phi^{(2)}(v,\gamma z,s).
\end{equation}
The sum converges absolutely a.e. and defines an integrable function $G(v,s)_\Gamma$ on $X_\Gamma$ (\citep[Prop. 3.1.1]{OdaTsuzuki}). Denote by $[G(v,s)_\Gamma]$ the associated current on $X_\Gamma$, defined by
\begin{equation}
[G(v,s)_\Gamma](\alpha)=\int_{X_\Gamma} G(v,z,s)_\Gamma \cdot \alpha(z)
\end{equation}
for $\alpha \in \mathcal{A}_c^{2n}(X_\Gamma)$. This current admits meromorphic continuation to $s \in \mathbb{C}$ with only simple poles (\citep[Theorem 6.3.1]{OdaTsuzuki}). In fact, as shown by \citet[Theorem 5.12]{Bruinier}, one can refine this result to show that the function $G(v,z,s)_\Gamma$ itself has meromorphic continuation to the whole complex plane and that the resulting function is real analytic on $X_\Gamma-Z(v)_\Gamma$. Define
\begin{equation} \label{def:Green_current_G(v)_Gamma}
G(v)_\Gamma=CT_{s=\rho_0}G(v,s)_\Gamma
\end{equation}
to be the constant term of $G(v,s)_\Gamma$ at $s=\rho_0$.

\begin{theorem}\citep[Thm. 5.14, Cor. 5.16]{Bruinier}
The function $G(v)_\Gamma$ is real analytic on $X_\Gamma-Z(v)_\Gamma$ and has a logarithmic singularity on $Z(v)_\Gamma$. The form $dd^c G(v)_\Gamma=-(2\pi i)^{-1} \cdot \partial \overline{\partial}G(v)_\Gamma$ extends to a $\mathcal{C}^\infty$ form on $X_\Gamma$ and one has the equation of currents:
\begin{equation}
dd^c[G(v)_\Gamma]= \delta_{Z(v)_\Gamma}+[dd^c G(v)_\Gamma].
\end{equation}
\end{theorem}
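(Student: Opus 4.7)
The plan is to first pin down the local singular behaviour of $G(v,z,s)_\Gamma$ near $Z(v)_\Gamma$, then upgrade the meromorphic continuation from currents to honest functions, and finally deduce the current equation from a Poincar\'e--Lelong argument.

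First, I would study $\phi^{(2)}(v,z,s)$ locally near $\mathbb{D}_v$. Using the explicit hypergeometric expression together with the asymptotic $\phi^{(2)}(v,a_t,s)=\log(t)+O(1)$ as $t \to 0$, one should be able to write
\[
\phi^{(2)}(v,z,s) = c(s)\cdot \log ||s_v(z)||^2 + \psi(v,z,s)
\]
in a neighbourhood of $\mathbb{D}_v$, where $c(s)$ is an explicit meromorphic function (with the correct normalization at $s=\rho_0$) and $\psi(v,\cdot,s)$ extends to a real analytic function across $\mathbb{D}_v$ for generic $s$. Summing over $\Gamma_v \backslash \Gamma$, the exponential decay estimate $\phi^{(2)}(v,a_t,s)=O(e^{-(Re(s)+\rho_0)t})$ ensures that near any point of $X_\Gamma$ only finitely many cosets $\gamma \in \Gamma_v \backslash \Gamma$ produce terms that fail to extend real analytically across $Z(v)_\Gamma$. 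Hence $G(v,\cdot,s)_\Gamma$ inherits, locally near $Z(v)_\Gamma$, the same logarithmic singularity up to a real analytic remainder.

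Second, for meromorphic continuation at the function level (not only as a current), I would exploit the fact that $G(v,\cdot,s)_\Gamma$ satisfies the elliptic equation $(\Delta_\mathbb{D}-(s^2-\rho_0^2))G(v,\cdot,s)_\Gamma=0$ on $X_\Gamma - Z(v)_\Gamma$ term by term. One route is to recognize $G(v,\cdot,s)_\Gamma$ as a multiple of the Schwartz kernel of the resolvent $(\Delta_\mathbb{D}-(s^2-\rho_0^2))^{-1}$ tested against a delta source on $Z(v)_\Gamma$; the pointwise continuation then follows from the standard meromorphic continuation of the resolvent of a Laplacian on $X_\Gamma$. Alternatively, in the spirit of the present paper, one could realize $G(v,\cdot,s)_\Gamma$ as a regularized theta lift from $SL_2$ and inherit the continuation from that of the Eisenstein or Poincar\'e series used as input. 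Taking the constant term at $s=\rho_0$ produces $G(v)_\Gamma$; real analyticity on the complement of $Z(v)_\Gamma$ is then automatic by elliptic regularity applied to the limiting PDE satisfied by the constant term.

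Third, for the current equation, I would pass to the limit $s\to \rho_0$ in the local decomposition of step one to obtain, on a neighbourhood of $Z(v)_\Gamma$, an identity
\[
G(v)_\Gamma = \log ||s_v||^2 + h
\]
with $h$ real analytic. The Poincar\'e--Lelong formula applied to the section $s_v$ of $\mathcal{L}^\vee$ with its canonical metric gives
\[
dd^c[\log ||s_v||^2] = \delta_{Z(v)_\Gamma} - c_1(\mathcal{L}^\vee,||\cdot||),
\]
and the Chern form $c_1(\mathcal{L}^\vee,||\cdot||)$ is globally smooth on $X_\Gamma$. Adding the smooth contribution $dd^c h$ and identifying the result with the smooth form $dd^c G(v)_\Gamma$ already defined on $X_\Gamma - Z(v)_\Gamma$, one concludes that this smooth form extends across $Z(v)_\Gamma$ and that the asserted equation of currents holds. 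The main obstacle is the second step: since the defining Poincar\'e series converges only for $Re(s) > \rho_0$ and Oda--Tsuzuki only provide a meromorphic continuation at the level of currents, upgrading to a pointwise continuation whose constant term at $s=\rho_0$ remains real analytic off $Z(v)_\Gamma$ requires genuine spectral input (resolvent kernels) or the full theta-lift apparatus. The first and third steps are then local computations of classical Poincar\'e--Lelong type.
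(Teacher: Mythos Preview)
The paper does not prove this theorem: it is stated with the citation \citep[Thm.~5.14, Cor.~5.16]{Bruinier} and used as a black box thereafter. There is therefore no in-paper proof to compare against.

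That said, your sketch is a reasonable outline of how Bruinier's argument actually goes. The local decomposition in your first step is essentially what the paper records in \eqref{eq:phi_expansion} (for the closely related functions $\phi(v,w,z,s)$), and your third step is the standard Poincar\'e--Lelong computation. You are right that the substantive work lies in the second step: Bruinier's proof of pointwise meromorphic continuation does use spectral theory of the Laplacian on $X_\Gamma$ (expanding in eigenfunctions and Eisenstein series, then continuing term by term), which is close to your ``resolvent kernel'' suggestion. Your alternative route via an $SL_2$ regularized theta lift is also viable and is in fact how Bruinier originally constructed these Green functions before identifying them with the Poincar\'e series $G(v,\cdot,s)_\Gamma$. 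One small correction: in your step one, the constant $c(s)$ multiplying $\log||s_v(z)||^2$ is not exactly $1$ for generic $s$ but does specialize to the correct normalization (giving residue $1$ for $\delta_{Z(v)_\Gamma}$) after taking the constant term at $s=\rho_0$; you should track this normalization carefully if you flesh out the argument.
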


Consider now a pair of vectors $v,w$ spanning a totally positive definite plane $U$ in $V$. Denote by $p_{v^\perp}(w)$ the projection of $w$ to the orthogonal complement of $v$. Recall that we write $X(v)_\Gamma=\Gamma_v \backslash \mathbb{D}_v^+$ and $\Gamma_{v,w}=\Gamma \cap H_U(\mathbb{R})$. The map
\[
\Gamma_{v,w} \backslash \mathbb{D}_U^+ \rightarrow X(v)_\Gamma
\]
then defines an effective divisor $Z(v,w)_\Gamma$ in $X(v)_\Gamma$. We define
\begin{equation}
G(v,w,z,s)_\Gamma=2 \cdot \sum_{\gamma \in \Gamma_{v,w} \backslash \Gamma_v} \phi^{(2)}_{\mathbb{D}_v}(p_{v^\perp}(w),\gamma z,s).
\end{equation}
The results described above imply that the sum converges when $Re(s) \gg 0$ to an integrable function on $X(v)_\Gamma$, and that we have a meromorphic continuation property, so that we can define
\begin{equation} \label{eq:def_G(v,w,z)_Gamma}
G(v,w,z)_\Gamma=CT_{s=(n-1)/2} G(v,w,z,s)_\Gamma.
\end{equation}
The function $G(v,w)_\Gamma$ is then real analytic on $X(v)_\Gamma-Z(v,w)_\Gamma$ and has a logarithmic singularity on $Z(v,w)_\Gamma$.

\subsection{The functions $\phi(v,w,z,s)$ on $\mathbb{D}$} \label{subsection:functions_phi_vwzs} 

For a pair of vectors $v,w \in V_1$, denote by $p_w(v)$ (resp. by $p_{w^\perp}(v)$) the projection of $v$ to the line spanned by $w$ (resp. the projection of $v$ to the orthogonal complement of $w$.)

\begin{definition} Let $v,w$ be a pair of vectors in $V_1$ spanning a positive definite plane and let $s_0=(n-1)/2$. For $Re(s)>s_0$, define
\begin{equation} \label{eq:phi(v,w,z,s)_def}
\begin{split}
\phi(v,w,z,s) & =-\frac{1}{2}\frac{\Gamma \left(\frac{s+s_0}{2}\right)\Gamma \left(\frac{s-s_0}{2}+1\right)}{\Gamma(s+1)}\cdot \left(\frac{Q(v)-Q(p_w(v))}{Q(v_{z^\perp})-Q(p_w(v))} \right)^\frac{s+s_0}{2} \\
& \quad \cdot F \left(\frac{s+s_0}{2},\frac{s-s_0}{2}+1,s+1,\frac{Q(v)-Q(p_w(v))}{Q(v_{z^\perp})-Q(p_w(v))}\right).
\end{split}
\end{equation}
\end{definition}

The following basic properties of $\phi(v,w,z,s)$ are easily checked.
\begin{lemma} \phantomsection \label{lem:phi_vwzs}
\begin{enumerate}
\item For every $h \in H_v(\mathbb{R})$, $\phi(v,w,z,s)=\phi(v,w,hz,s)$.
\item For every $h \in H(\mathbb{R})$, $\phi(hv,hw,hz,s)=\phi(v,w,z,s)$.
\item The restriction of $\phi(v,w,z,s)$ to $\mathbb{D}_w$ equals $\phi^{(2)}_{\mathbb{D}_w}(p_{w^\perp}(v),z,s)$.
\item Consider the function $\phi(v,w,g,s)=\phi(v,w,gz_0,s)$ for $g \in G$. It belongs to $\mathcal{C}^\infty(G-G_vK_{z_0})$ and satisfies $\phi(v,w,g' g k,s)=\phi(v,w,g,s)$ for every $g' \in G_v$, $k \in K_{z_0}$. Writing $G=G_v A^+ K_{z_0}$ as above, we have
\begin{equation} \label{eq:phi_t_infty}
\phi(v,w,a_t,s)=log(t)+O(1) \text{ as } t \rightarrow 0,
\end{equation}
\begin{equation} \label{eq:phi_t_0}
\phi(v,w,a_t,s)=O(e^{-(Re(s)+s_0)t}) \text{ as } t \rightarrow +\infty.
\end{equation}
\end{enumerate}
\end{lemma}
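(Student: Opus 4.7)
The plan is to reduce everything to the explicit formula \eqref{eq:phi(v,w,z,s)_def}, recognizing that $\phi(v,w,z,s)$ depends on $v$, $w$, $z$ only through the two quadratic quantities
\begin{equation*}
A(v,w) := Q(v) - Q(p_w(v)) = Q(p_{w^\perp}(v)), \qquad B(v,w,z) := Q(v_{z^\perp}) - Q(p_w(v)),
\end{equation*}
via a universal function of $s$ and the ratio $A/B$. Properties (1)--(3) then amount to checking how $A$ and $B$ transform, while (4) reduces to asymptotic analysis of the Gauss hypergeometric function.

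For (1), if $h\in H_v(\mathbb{R})$, then $h$ fixes $v$ and preserves $Q$, so $p_w(v)$ is fixed by the identity and $v_{(hz)^\perp} = h(v_{z^\perp})$ has the same norm as $v_{z^\perp}$; hence $A$ and $B$ are unchanged. For (2), the same computation shows $A(hv,hw) = A(v,w)$ (since $p_{hw}(hv) = h\,p_w(v)$) and $B(hv,hw,hz) = B(v,w,z)$, so the formula is manifestly $H(\mathbb{R})$-equivariant.

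For (3), suppose $z\in \mathbb{D}_w$ so that $w\in z^\perp$. Then $p_w(v)\in \langle w\rangle \subset z^\perp$ and, inside $z^\perp$, the orthogonal complement of $\langle w\rangle$ is $z^\perp\cap w^\perp$. Decomposing $v_{z^\perp} = p_w(v) + \pi(v)$ with $\pi(v)$ the projection of $p_{w^\perp}(v)$ to $z^\perp\cap w^\perp$, orthogonality yields
\begin{equation*}
B(v,w,z) = Q(\pi(v)) = Q\bigl((p_{w^\perp}(v))_{(z^\perp\cap w^\perp)}\bigr), \qquad A(v,w) = Q(p_{w^\perp}(v)).
\end{equation*}
These are exactly the quantities entering $\phi^{(2)}_{\mathbb{D}_w}(p_{w^\perp}(v),z,s)$ with ambient signature $(n-1,2)$ and spectral parameter $\rho_0' = (n-1)/2 = s_0$. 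Matching of the Gamma factors and the arguments of $F$ is then immediate.

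Property (4) is the main point. The left $G_v$- and right $K_{z_0}$-invariance of $\phi(v,w,g,s)$ and its smoothness off $G_vK_{z_0}$ follow from (1) and the fact that $z_0\in\mathbb{D}_v$ (so the singular locus $\mathbb{D}_v$ corresponds to the coset $G_vK_{z_0}$). To read off the asymptotics along $A^+$, I will compute $v_{(a_tz_0)^\perp}$ explicitly: with $v=v_1$ and $z_0=\langle v_{n+1},v_{n+2}\rangle$, one finds $(a_tz_0)^\perp = \langle a_tv_1,v_2,\ldots,v_n\rangle$ and hence $v_{(a_tz_0)^\perp} = \cosh(t)\,a_tv_1$, giving $Q(v_{(a_tz_0)^\perp}) = \cosh^2(t)$. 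Writing $c := Q(p_w(v))$, the argument of $F$ becomes
\begin{equation*}
\xi(t) := \frac{A(v,w)}{B(v,w,a_tz_0)} = \frac{1-c}{\cosh^2(t)-c}.
\end{equation*}
As $t\to 0^+$ we have $1-\xi(t) \sim \sinh^2(t)/(1-c) \sim t^2$, and since $(s+s_0)/2 + (s-s_0)/2 + 1 = s+1$, the parameters of $F$ satisfy $c=a+b$, so $F$ has a logarithmic singularity at $1$: $F(a,b,a+b;z) \sim -\tfrac{\Gamma(a+b)}{\Gamma(a)\Gamma(b)}\log(1-z)$. The Gamma prefactor in \eqref{eq:phi(v,w,z,s)_def} is chosen precisely to cancel this constant and produce $\log(t) + O(1)$. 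As $t\to +\infty$, $\xi(t)\sim 4(1-c)e^{-2t}\to 0$, so $F\to 1$ and the prefactor $\xi(t)^{(s+s_0)/2}$ provides the claimed bound $O(e^{-(\operatorname{Re}(s)+s_0)t})$. The main obstacle in this step is keeping careful track of the normalizing constants from the hypergeometric connection formula at $z=1$ to confirm that the coefficient of $\log(t)$ is exactly $1$; the rest is straightforward asymptotic bookkeeping.
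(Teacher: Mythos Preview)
Your proof is correct. The paper itself offers no argument beyond the sentence ``The following basic properties of $\phi(v,w,z,s)$ are easily checked,'' so there is no approach to compare against; you have supplied exactly the kind of direct verification from the explicit formula \eqref{eq:phi(v,w,z,s)_def} that the author had in mind, including the key observation that the hypergeometric parameters satisfy $a+b=c$ so that the connection formula at $z=1$ produces the logarithm with the correct coefficient.
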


Note that one $(1)$ and $(2)$ imply $\phi(v,w,z,s)=\phi(v,h_vw,z,s)$ for every $h_v \in H_v(\mathbb{R})$, so that for fixed $v,z,s$, the function $\phi(v,w,z,s)$ only depends on the $H_v(\mathbb{R})$-orbit of $w$. Moreover, property \eqref{eq:phi_t_0} also holds for all partial derivatives of $\phi(v,w,z,s)$. Note also that property \eqref{eq:phi_t_infty} implies that $\phi(v,w,z,s)$ is locally integrable. Concerning the behaviour of the partial derivatives of $\phi(v,w,z,s)$ as $z$ approaches $\mathbb{D}_v$, we have the following lemma.
\begin{lemma} \label{lem:derivatives_loc_int}
The partial derivatives $\partial \phi(v,w,z,s)$, $\overline{\partial}\phi(v,w,z,s)$ and $\partial \overline{\partial}\phi(v,w,z,s)$ are locally integrable.
\end{lemma}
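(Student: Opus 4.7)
The plan is to localize the problem near $\mathbb{D}_v$, establish that $\phi(v,w,z,s)$ has a purely logarithmic singularity along this smooth complex hypersurface, and then apply standard local-integrability estimates for $\log|z_1|^2$ and its derivatives. Since $\phi$ is smooth away from $\mathbb{D}_v$ (the hypergeometric series in \eqref{eq:phi(v,w,z,s)_def} converges and is real analytic there), all three forms are smooth on $\mathbb{D}-\mathbb{D}_v$, and the question reduces to analyzing a neighborhood $U$ of an arbitrary point $z_0 \in \mathbb{D}_v$. Because $\mathbb{D}_v$ is cut out by the holomorphic equation $(v, \cdot) = 0$ in the quadric model $\mathcal{Q}_-$, I can pick local holomorphic coordinates $(z_1, \ldots, z_n)$ on $U$ with $\mathbb{D}_v \cap U = \{z_1 = 0\}$.

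The key step, and the main technical obstacle, is proving a decomposition
\[
\phi(v,w,z,s) = C(s) \log|z_1|^2 + h(z,s)
\]
on $U$, with $h$ smooth across $\mathbb{D}_v$. I would deduce this from \eqref{eq:phi(v,w,z,s)_def}: the parameters $(a,b,c) = ((s+s_0)/2, (s-s_0)/2 + 1, s+1)$ satisfy the resonance relation $c - a - b = 0$, for which one has the classical logarithmic expansion
\[
F(a, b, a+b; u) = -\frac{\Gamma(a+b)}{\Gamma(a)\Gamma(b)} \log(1-u) + (\text{function analytic in } 1 - u \text{ near } 0).
\]
The argument $u$ equals $1$ exactly on $\mathbb{D}_v$, and an inspection of the defining formula shows that $1-u = |z_1|^2 \cdot e(z)$ for a positive real analytic function $e$ near $z_0$; this uses that the projection $p_z(v)$ to the negative plane $z$ vanishes to first order as $z \to \mathbb{D}_v$ along the holomorphic normal direction. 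Substituting this into the hypergeometric expansion and into the prefactor $u^{(s+s_0)/2}$ (which is smooth at $u=1$) yields the desired decomposition. This substitution is where one must be most careful to ensure that no additional non-smooth terms sneak into $h$.

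With the decomposition in hand, the three claims are immediate:
\[
\partial \phi = C(s)\, \frac{dz_1}{z_1} + \partial h, \qquad \bar\partial \phi = C(s)\, \frac{d\bar z_1}{\bar z_1} + \bar\partial h,
\]
whose coefficients are of size $O(|z_1|^{-1})$, locally integrable on $\mathbb{C}^n$ because $\int_{|z_1| < \epsilon} |z_1|^{-1} dV \sim \int_0^\epsilon r^{-1} \cdot r\, dr < \infty$ in polar coordinates in the $z_1$-plane. For $\partial \bar\partial \phi$, the familiar cancellation $\partial \bar\partial \log|z_1|^2 \equiv 0$ on $\{z_1 \neq 0\}$ gives $\partial \bar\partial \phi = \partial \bar\partial h$ on $U - \mathbb{D}_v$, which extends to a smooth form on $U$ and is in particular locally integrable. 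Patching together over a covering of $\mathbb{D}_v$ and combining with the smoothness on the complement completes the proof.
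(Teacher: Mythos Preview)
Your overall strategy matches the paper's, but the expansion you quote for the hypergeometric function at the logarithmic point is not quite right, and this creates a real gap in your treatment of $\partial\overline{\partial}\phi$. The identity
\[
F(a,b,a+b;u) \;=\; -\frac{\Gamma(a+b)}{\Gamma(a)\Gamma(b)}\,\log(1-u) \;+\; (\text{analytic in }1-u)
\]
is false: the full expansion (see e.g.\ Lebedev, (9.7.5)) contains terms $(1-u)^n\log(1-u)$ for \emph{every} $n\geq 0$, so the remainder is not analytic in $1-u$. With $1-u = |z_1|^2\,e(z)$ this gives, as in the paper,
\[
\phi(v,w,z,s)=\log|z_1| \;+\; |z_1|^2\log|z_1|\cdot f(z) \;+\; g(z),
\]
with $f,g$ real analytic on $U$; the middle term is not smooth, so your ``$h$ smooth across $\mathbb{D}_v$'' is incorrect.

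This does not affect your argument for $\partial\phi$ and $\overline{\partial}\phi$: the extra term contributes only $O(|z_1|\log|z_1|)$ to the first derivatives, which is harmless. But for $\partial\overline{\partial}\phi$ your claim that it extends smoothly fails. A direct computation gives
\[
\partial_{z_1}\overline{\partial}_{\bar z_1}\bigl(|z_1|^2\log|z_1|\bigr)=\log|z_1|+1,
\]
so $\partial\overline{\partial}\phi$ has a genuine $\log|z_1|$ singularity along $\mathbb{D}_v$ and is \emph{not} smooth there. The conclusion of the lemma still holds, because $\log|z_1|$ is locally integrable; this is exactly how the paper finishes (``the singularities \ldots\ are at worst of the form $|z_1|^{-1}$ or $\log|z_1|$''). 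To repair your proof, replace the ``$h$ smooth'' decomposition by the correct three-term expansion above and argue local integrability of $\log|z_1|$ directly, rather than asserting smooth extension.
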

\begin{proof}
Let $U \subset \mathbb{D}^+$ be an open with coordinates $\{z_1,\ldots,z_n\}$ such that the analytic divisor $\mathbb{D}_v^+ \cap U$ is given by the equation $z_1=0$ on $U$. Choosing a trivialization of $\mathcal{L}$ on $U$ we can write $-Q(v_z)=||s_v(z)||^2=h(z)|z_1|^2$, where $h(z)$ is real analytic on $U$. It follows from the expansion of the hypergeometric function $F(a,b,a+b,w)$ around $w=1$ (see \citep[(9.7.5)]{Lebedev})) that, for fixed $v,w,s$ and $z \in U$:
\begin{equation}\label{eq:phi_expansion}
\phi(v,w,z,s)=\log|z_1| + |z_1|^2\log|z_1|\cdot f(z) + g(z),
\end{equation}
where $f$ and $g$ are real analytic functions on $U$. 
Thus the singularities of $||\partial \phi(v,w,z,s)||$, $||\overline{\partial} \phi(v,w,z,s)||$ and $||\partial \overline{\partial} \phi(v,w,z,s)||$ are at worst of the form $|z_1|^{-1}$ or $\log|z_1|$ and the statement follows.
\end{proof}
The function $\phi(v,w,z,s)$ can also be obtained as a Laplace transform of a certain Whittaker function that depends on $s$. Namely, consider Kummer's hypergeometric function:
\begin{equation}
M(a,b,z)=\sum_{n=0}^{+\infty}\frac{(a)_n}{(b)_n}\frac{z^n}{n!}.
\end{equation}
The function
\begin{equation}
M_{\nu,\mu}(z)=e^{-z/2}z^{1/2+\mu}M(\frac{1}{2}+\mu-\nu,1+2\mu,z)
\end{equation}
is then a solution of the Whittaker differential equation
\begin{equation}
\frac{d^2 w}{dz^2}+\left(-\frac{1}{4}+\frac{\nu}{z}-\frac{\mu^2-1/4}{z^2} \right)w =0.
\end{equation}
It is characterized among solutions of this equation by its asymptotic behaviour, given by:
\begin{equation} \label{eq:Whittaker_asymp_z_0}
M_{\nu,\mu}(z)=z^{\mu+1/2}(1+O(z)) \ \ \ \ \text{ when } z \rightarrow 0,
\end{equation}
\begin{equation} \label{eq:Whittaker_asymp_z_infty}
M_{\nu,\mu}(z)=\frac{\Gamma(1+2\mu)}{\Gamma(\mu-\nu+1/2)}e^{z/2}z^{-\nu}(1+O(z^{-1})), \ \ \ \ \text{ when } z \rightarrow \infty.
\end{equation}
For a positive definite symmetric matrix $T = \left( \begin{smallmatrix} a&b\\ b&c \end{smallmatrix}\right)$, define
\begin{equation}
s_0=(n-1)/2, \ \ \ \ k=1-s_0,
\end{equation}
\begin{equation}
C(T,s)=-\frac{1}{2} \cdot \frac{\Gamma\left( \frac{s-s_0}{2}+1\right)}{\Gamma(s+1)} \cdot \left( \frac{4\pi \det(T)}{c} \right)^{-k/2},
\end{equation}
\begin{equation}
M_T(y,s)= C(T,s) \cdot |y|^{-k/2}\cdot M_{-k/2,s/2}\left( \left|\frac{4\pi \det(T)}{c}y \right| \right) \cdot e^{\frac{2\pi b^2}{c}y}, \ \ \ Re(s)>s_0.
\end{equation}
Now consider $v,w \in V_1$ spanning a positive definite plane and denote by
\begin{equation}\label{eq:momentmatrix}
T(v,w)=\frac{1}{2}\left( \begin{array}{cc}
(v,v) & (v,w) \\
(v,w) & (w,w) \end{array} \right)
\end{equation}
the associated moment matrix. Then (see \citep[p. 215, (11)]{ErdelyiTables}):
\begin{equation}\label{eq:Laplace_transform}
\phi(v,w,z,s)=\int_{0}^\infty M_{T(v,w)}(y,s) \cdot e^{-2\pi y (Q(v_{z^\perp})-Q(v_z))}\frac{dy}{y}.
\end{equation}

\subsection{Currents in $\mathcal{D}^{1,1}(X_\Gamma)$} \label{subsection:currents_in_X_Gamma} 
We now define some $(1,1)$-forms and currents on $X_\Gamma$ by summation over translates by elements of $\Gamma$ of some differential forms with singularities on $\mathbb{D}$. For vectors $v,w \in V(F)$ spanning a totally positive definite space, consider the $(1,1)$-form $\omega(v,w,z,s)$ defined for $z \in \mathbb{D}^+-(\mathbb{D}_v^+ \cup \mathbb{D}_w^+)$ by
\begin{equation}
\begin{split}
\omega(v,w,z,s) & =\overline{\partial}(\phi(w,v,z,s) \partial \phi(v,w,z,s) ) \\
 & =\overline{\partial} \phi(w,v,z,s) \wedge \partial \phi(v,w,z,s) + \phi(w,v,z,s) \overline{\partial} \partial \phi(v,w,z,s).
\end{split}
\end{equation}
We would like to define a $(1,1)$-form on $X_\Gamma$ by averaging the form $\omega(v,w,z,s)$ over $\Gamma$. Before making such a definition, we need to check that the resulting sums converge in a suitable sense. This is the content of the next result. Note that we have
\[
\gamma^*(\omega(v,w,s))(z)=\omega(\gamma^{-1}v,\gamma^{-1}w,z,s)
\]
for all $\gamma \in \Gamma$, due to the invariance property in \hyperref[lem:phi_vwzs]{Lemma \ref*{lem:phi_vwzs}}, $(2)$.

\begin{proposition} \label{prop:normal_convergence}
Let $v,w \in V(F)$ be vectors spanning a totally positive definite plane. Let $U=\mathbb{D}^+ -(\Gamma \cdot \mathbb{D}_v^+ \cup \Gamma \cdot \mathbb{D}_w^+)$. For $Re(s) \gg 0$, the sum
\begin{equation*}
\sum_{\gamma \in \Gamma_{v,w} \backslash \Gamma} \omega(\gamma^{-1}v,\gamma^{-1}w,z,s)
\end{equation*}
and all of its partial derivatives converge normally for every $z \in U$.
\end{proposition}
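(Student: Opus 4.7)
The approach is to dominate the series termwise by a convergent Poincar\'e series of the type already analyzed for $\phi^{(2)}$ by Oda--Tsuzuki and Bruinier. Using the invariance property in \autoref{lem:phi_vwzs}(2),
\[
\omega(\gamma^{-1}v,\gamma^{-1}w,z,s) = \omega(v,w,\gamma z,s),
\]
and since $D^\alpha_z$ commutes with the pullback $\gamma^*$, the task reduces to showing that for every compact $K \subset U$ and every multi-index $\alpha$, the series $\sum_{\gamma \in \Gamma_{v,w}\backslash\Gamma} |D^\alpha \omega(v,w,\gamma z,s)|$ is uniformly bounded on $K$, provided $Re(s)\gg 0$.

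For the pointwise bound I would differentiate under the Laplace transform \eqref{eq:Laplace_transform},
\[
\phi(v,w,z,s) = \int_0^\infty M_{T(v,w)}(y,s)\,e^{-2\pi y R(v,z)}\,\frac{dy}{y},\qquad R(v,z) := Q(v_{z^\perp}) - Q(v_z),
\]
so that each $D^\alpha_z$ brings in a polynomial in $y$ with smooth coefficients. The Whittaker asymptotics \eqref{eq:Whittaker_asymp_z_0}--\eqref{eq:Whittaker_asymp_z_infty} combined with the identity $R(v,z) = Q(v) + 2|Q(v_z)|$ show that the leading exponential $e^{2\pi Q(v) y}$ of $M_{T(v,w)}(y,s)$ at $y\to\infty$ is exactly cancelled by the factor $e^{-2\pi y R(v,z)}$, leaving only $e^{-4\pi y |Q(v_z)|}$. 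The resulting Mellin-type integrals yield an estimate
\[
|D^\alpha \omega(v,w,\gamma z,s)| \leq C_\alpha(s) \cdot Q((\gamma^{-1}v)_{z^\perp})^{-M_1(s)}\,Q((\gamma^{-1}w)_{z^\perp})^{-M_2(s)}
\]
for $z \in K$, with exponents $M_i(s)$ that grow linearly in $Re(s)$; the symmetric bound with $v$ and $w$ exchanged controls the $\phi(w,v,\cdot,s)$-factor of $\omega$.

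To sum this majorant I would split via the double-coset decomposition $\Gamma_{v,w}\backslash\Gamma = \bigsqcup_{[\gamma]\in \Gamma_v\backslash\Gamma}(\Gamma_{v,w}\backslash\Gamma_v)\cdot\gamma$. Since $K$ lies at positive distance from every $\Gamma$-translate of $\mathbb{D}_v^+$, only finitely many cosets $[\gamma]$ bring the divisor $\mathbb{D}_{\gamma^{-1}v}^+$ within a fixed neighbourhood of $K$; for all other cosets $Q((\gamma^{-1}v)_{z^\perp})$ is uniformly bounded below on $K$, so the first factor is harmless. The outer sum over $[\gamma]\in\Gamma_v\backslash\Gamma$ is then controlled, for $Re(s)$ large, by the same lattice-counting estimate that underlies the convergence of the Poincar\'e series for $\phi^{(2)}(v,\cdot,Re(s))$ recalled in \autoref{subsection:Green_currents}; by \autoref{lem:phi_vwzs}(3) the inner sum over $\Gamma_{v,w}\backslash\Gamma_v$ restricts on $\mathbb{D}_v^+$ to the analogous one-vector Poincar\'e series attached to $p_{v^\perp}(w)$ and $\Gamma_v$, which converges by the same theorem applied one dimension down. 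The main obstacle is the decoupling of the two factors in this joint convergence: one needs a lattice-counting estimate on pairs $(\gamma^{-1}v,\gamma^{-1}w)$ in the $\Gamma$-orbit of $(v,w)$ that remains uniform in $z\in K$, and this is what forces the hypothesis $Re(s)\gg 0$ and fixes the precise threshold beyond which the proposition holds.
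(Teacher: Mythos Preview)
Your pointwise estimate is correct and leads to essentially the same majorant the paper uses,
\[
\|\omega(\gamma^{-1}v,\gamma^{-1}w,z,s)\| \leq C\cdot Q((\gamma^{-1}v)_{z^\perp})^{-\frac{s+s_0}{2}}\cdot Q((\gamma^{-1}w)_{z^\perp})^{-\frac{s+s_0}{2}}
\]
on a compact $K\subset U$ (the paper extracts this directly from the asymptotics in \autoref{lem:phi_vwzs} rather than via the Laplace transform, but your derivation is fine and handles derivatives cleanly).

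The gap is in the summation. Your double-coset split $\Gamma_{v,w}\backslash\Gamma = \bigsqcup_{[\gamma_1]\in\Gamma_v\backslash\Gamma}(\Gamma_{v,w}\backslash\Gamma_v)\gamma_1$ does not decouple the two factors: for fixed $\gamma_1$ the inner sum is $\sum_{\gamma_2\in\Gamma_{v,w}\backslash\Gamma_v} Q((\gamma_1^{-1}\gamma_2^{-1}w)_{z^\perp})^{-M}$, and you would need this to be bounded \emph{uniformly in $\gamma_1$} to multiply by the outer Poincar\'e series. The appeal to convergence of the one-vector series on $\mathbb{D}_v^+$ does not furnish that uniformity, since $z$ is neither on $\mathbb{D}_v^+$ nor on $\mathbb{D}_{\gamma_1^{-1}v}^+$, and the implied constants depend on where $\gamma_1^{-1}z$ sits relative to the relevant divisor. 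You recognize this in your last sentence but do not supply the missing estimate.

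The paper's device is much simpler and avoids the double-coset route entirely: choose a lattice $L\subset V(F)$ with $\Gamma\cdot(v,w)\subset L^2$. Then the majorant is dominated by
\[
\Bigl(\sum_{\substack{\lambda\in L\\ Q(\lambda)=Q(v)}} Q(\lambda_{z^\perp})^{-\frac{s+s_0}{2}}\Bigr)\cdot\Bigl(\sum_{\substack{\lambda\in L\\ Q(\lambda)=Q(w)}} Q(\lambda_{z^\perp})^{-\frac{s+s_0}{2}}\Bigr),
\]
a genuine product of two \emph{independent} lattice sums, each normally convergent on $K$ for $Re(s)\gg 0$ because $\lambda\mapsto Q(\lambda_{z^\perp})-Q(\lambda_z)$ is a positive definite form varying continuously with $z$. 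Embedding the diagonal $\Gamma$-orbit into the full product lattice is exactly the decoupling step your argument is missing.
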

\begin{proof}
Since the function $\phi(v,w,\gamma z,s)$ is defined and smooth for every $z \in \mathbb{D}-\mathbb{D}_{\gamma^{-1}v}$, all the terms in the sum are defined whenever $z \in U$. Fix $z_0 \in U$ and let $U_0 \subset U$ be a compact neighborhood of $z_0$; then there exists $\epsilon>0$ such that $|Q((\gamma v)_{z})|> \epsilon$ and $|Q((\gamma w)_{z})|>\epsilon$ for all $\gamma \in \Gamma$ and all $z \in U_0$. It follows from \hyperref[lem:phi_vwzs]{Lemma \ref*{lem:phi_vwzs}} that on $U_0$ we have
\begin{equation*}
||\omega(\gamma^{-1}v,\gamma^{-1} w,z,s)|| < C_\epsilon \cdot |Q((\gamma^{-1}v)_{z^\perp})|^{-\frac{s+s_0}{2}} \cdot |Q((\gamma^{-1}w)_{z^\perp})|^{-\frac{s+s_0}{2}}
\end{equation*}
for some constant $C_\epsilon>0$, and a similar bound holds for the sums of all the partial derivatives of the summands. Thus, for $z \in U_0$, the sums in the statement are dominated a constant multiple of
\begin{equation*}
\sum_{\gamma \in \Gamma_{v,w}\backslash \Gamma} |Q((\gamma^{-1}v)_{z^\perp})|^{-\frac{s+s_0}{2}} \cdot |Q((\gamma^{-1}w)_{z^\perp})|^{-\frac{s+s_0}{2}}.
\end{equation*}
Pick a lattice $L \subset V(F)$ such that $\Gamma \cdot (v,w) \subset L^{2}$; then the above sum is dominated by
\begin{equation*}
(\sum_{\stackrel{\lambda \in L}{Q(\lambda)=Q(v)}} |Q(\lambda_{z^\perp})|^{-\frac{s+s_0}{2}} ) \cdot (\sum_{\stackrel{\lambda \in L}{Q(\lambda)=Q(w)}} |Q(\lambda_{z^\perp})|^{-\frac{s+s_0}{2}} ),
\end{equation*}
which converges normally on $U$, since the assignment $v \mapsto Q(v_{z^\perp})-Q(v_z)$ defines a positive definite quadratic form on $V_1$ that depends continuously on $z$.
\end{proof}

Define
\begin{equation} \label{eq:Phi(v,w,z,s)_Gamma_def}
\Phi(v,w,z,s)_\Gamma=2 \cdot \sum_{\gamma \in \Gamma_{v,w} \backslash \Gamma} \omega(\gamma^{-1}v,\gamma^{-1}w,z,s).
\end{equation}
Note that
\begin{equation}\label{eq:equivariance_Phi_Gamma}
\Phi(v,w,z,s)_\Gamma=\Phi(\gamma v, \gamma w, z,s)_\Gamma \ \ \ \ \forall \gamma \in \Gamma.
\end{equation}
\autoref{prop:normal_convergence} shows that $\Phi(v,w,\cdot,s)_\Gamma$ converges and defines a smooth (1,1)-form on $X_\Gamma - (Z(v)_\Gamma \cup Z(w)_\Gamma)$. 

Denote the cotangent bundle of a manifold $X$ by $T^*X$. A section $s$ of a metrized vector bundle $(E,||\cdot||)$ over a manifold $X$ endowed with a measure $d\mu(z)$ is said to be $L^1$ (or integrable) if $||s|| \in L^1(X,d\mu(z))$. Our next goal is to show that $\Phi(v,w,z,s)_\Gamma$ is integrable on $X_\Gamma$; this is the content of \autoref{prop:Current_Sum_Converges_L1}. The next two lemmas will be used in the proof.

\begin{lemma}\label{lem:distances_nonpositive_curvature}
Let $M$ be a complete, simply-connected Riemannian manifold of everywhere nonpositive sectional curvature. Let $X, Y \subset M$ be complete, simply connected, totally geodesic submanifolds that intersect transversely and at a single point $z_0 \in M$. For $z \in M$, denote by $d(z,z_0)$ the geodesic distance between $z$ and $z_0$ and by $d_X(z)$ (resp. $d_Y(z)$) the geodesic distance from $z$ to $X$ (resp. from $z$ to $Y$). Then there exists a constant $k >0$ such that $d(z_0,z) \geq t$ implies $max \{d_X(z),d_Y(z)\} \geq kt$ for every $t \geq 0$.
\end{lemma}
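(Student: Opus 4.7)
The plan is to linearize the geometry at $z_0$ via the exponential map, reduce the desired inequality to a compactness statement on the unit sphere of $T_{z_0}M$, and then transport the result back to $M$ using the convexity of the distance functions to $X$ and $Y$. The two inputs from Hadamard geometry I will use are: (i) $\exp_{z_0}: T_{z_0}M \to M$ is a diffeomorphism and carries the linear subspaces $T_{z_0}X, T_{z_0}Y$ bijectively onto $X, Y$ respectively, since these are complete and totally geodesic through $z_0$; and (ii) a complete totally geodesic submanifold of a Hadamard manifold is convex, so $d_X$ and $d_Y$ are convex functions on $M$.

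First I would establish the linearized version of the statement: the subspaces $T_{z_0}X, T_{z_0}Y \subset T_{z_0}M$ intersect only at the origin. Indeed, a nonzero $v$ in the intersection would generate a geodesic $t \mapsto \exp_{z_0}(tv)$ lying entirely in both $X$ and $Y$, contradicting $X \cap Y = \{z_0\}$. Consequently the continuous function
\[
F(v) := \max\{d_{\mathrm{eucl}}(v, T_{z_0}X),\ d_{\mathrm{eucl}}(v, T_{z_0}Y)\}
\]
is strictly positive on the unit sphere $S \subset T_{z_0}M$, and by compactness of $S$ attains a positive minimum $k > 0$.

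To transfer this bound to $M$, write an arbitrary point as $z = \exp_{z_0}(tv)$ with $v \in S$ and $t = d(z_0,z)\geq 0$, and set $\gamma_v(s) = \exp_{z_0}(sv)$. The function $f(s) := d_X(\gamma_v(s))$ is convex with $f(0) = 0$, so $f(s)/s$ is non-decreasing on $s>0$ and in particular $f(t)/t \geq \lim_{s \to 0^+} f(s)/s$. In Riemannian normal coordinates at $z_0$, the submanifold $X$ is tangent to first order to $T_{z_0}X$ and the Riemannian metric agrees with the Euclidean metric at $z_0$, so $f(s) = s \cdot d_{\mathrm{eucl}}(v, T_{z_0}X) + O(s^2)$ as $s \to 0^+$; thus the limit above equals $d_{\mathrm{eucl}}(v, T_{z_0}X)$, and $d_X(z) \geq t \cdot d_{\mathrm{eucl}}(v, T_{z_0}X)$. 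The same argument gives $d_Y(z) \geq t \cdot d_{\mathrm{eucl}}(v, T_{z_0}Y)$, and taking maxima yields
\[
\max\{d_X(z), d_Y(z)\} \geq t \cdot F(v) \geq k \cdot d(z_0, z).
\]

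The only place where the nonpositive curvature hypothesis really enters is the convexity of $d_X$ and $d_Y$, which I would invoke as the standard CAT(0) fact that the distance to a closed convex subset of a Hadamard manifold is convex along geodesics; everything else is either compactness, the diffeomorphism property of $\exp_{z_0}$, or the first-order expansion in normal coordinates. The transversality hypothesis plays no essential role beyond a dimensional one: the conclusion only needs $T_{z_0}X \cap T_{z_0}Y = 0$, which already follows from $X \cap Y = \{z_0\}$.
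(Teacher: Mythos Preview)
Your argument is correct. The key steps---$T_{z_0}X\cap T_{z_0}Y=0$ from the single-intersection hypothesis, the positive minimum of $F$ on the unit sphere by compactness, convexity of $d_X$ along geodesics in a Hadamard space, and the first-order identification $f'(0^+)=d_{\mathrm{eucl}}(v,T_{z_0}X)$---are all sound. For the last step one can sharpen your $O(s^2)$ claim: since $X$ is totally geodesic and complete through $z_0$, in normal coordinates $X$ coincides \emph{exactly} with the linear subspace $T_{z_0}X$, and then the Rauch comparison (the exponential map in nonpositive curvature is distance non-decreasing) already gives $f(s)\ge s\,d_{\mathrm{eucl}}(v,T_{z_0}X)$ for every $s>0$, so the passage through the limit via convexity is in fact unnecessary.

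The paper's proof is quite different in flavour: rather than linearizing, it works synthetically in the CAT(0) space. Given $z$ with $\max\{d_X(z),d_Y(z)\}<d$, it picks nearby points $z_X\in X$, $z_Y\in Y$, forms the geodesic triangle $(z_0,z_X,z_Y)$, observes that the angle at $z_0$ is bounded below (this is where the tangent-space condition enters), and then invokes the CAT(0) angle comparison to bound $d(z_0,z_X)$ by a constant times $d(z_X,z_Y)<2d$. Your approach is more analytic and arguably more transparent about the constant $k$; it also makes explicit, as you note, that transversality per se is not needed---only $T_{z_0}X\cap T_{z_0}Y=0$, which already follows from $X\cap Y=\{z_0\}$. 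The paper's approach, on the other hand, avoids any discussion of normal coordinates or first-order expansions and stays entirely within comparison geometry.
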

\begin{proof} Let $d>0$ and suppose that $max \{d_X(z),d_Y(z)\} < d$. Choose points $z_X \in X$ and $z_Y \in Y$ such that $d(z_X,z)<d$ and $d(z_Y,z)<d$. Let $\gamma(z_X,z_Y)$ be the geodesic segment connecting $z_X$ and $z_Y$; such a geodesic exists, is unique and minimizes the distance (see \citep[IV.12]{Chavel}), hence its length $l(\gamma(z_X,z_Y))$ satisfies $l(\gamma(z_X,z_Y))<2d$. Let $\gamma(z_0,z_X)$ (resp. $\gamma(z_0,z_Y)$) be the geodesic segment in $X$ (resp. $Y$) connecting $z_0$ and $z_X$ (resp. $z_Y$); as before, these geodesics exist and are unique and minimizing.

Consider now the triangle $T$ in $M$ with sides $\{ \gamma(z_0,z_X),\gamma(z_0,z_Y),\gamma(z_X,z_Y) \}$. This is a geodesic triangle since $X$ and $Y$ are totally geodesic. Note that the angle at $z_0$ is bounded below since $X$ and $Y$ are assumed to intersect transversely. By the Cartan-Hadamard theorem (cf. \citep[II.4.1]{BridsonHaefliger}), the space $M$ is a $CAT(0)$ space, in other words the (unique up to congruence) triangle in the euclidean plane with same sides as $T$ has larger angles than $T$ (see \citep[II.1.7.(4)]{BridsonHaefliger}).
It follows that $d(z_0,z_X) \leq c \cdot d(z_X,z_Y)$ for some positive constant $c$. Hence $d(z_0,z)\leq d(z_0,z_X)+d(z_X,z)<(2c+1)d$ as required.
\end{proof}

\begin{lemma} \label{lem:integrable_fcn_M}
Let $M,X,Y$ be as in \autoref{lem:distances_nonpositive_curvature}. Assume that the codimension of $X$ and $Y$ in $M$ is greater than one and that the sectional curvature of $M$ is bounded below. Let $f_{1,s},f_{2,s}:\mathbb{R}_{>0} \rightarrow \mathbb{R}_{>0}$ be continuous functions defined for $Re(s)>s_0>0$ such that 
\begin{equation*}
t \cdot f_{i,s}(t) = O(1), \text{ as } t \rightarrow 0,
\end{equation*}
\begin{equation*}
f_{i,s}(t) = e^{-Re(s)\cdot t}, \text{ as } t \rightarrow \infty,
\end{equation*}
for $i=1,2$. Let $d\mu(z)$ be the Riemannian volume element of $M$. Then, with notations as in \autoref{lem:distances_nonpositive_curvature}, we have
\begin{equation*}
\int_{M}f_{1,s}(d_X(z))f_{2,s}(d_Y(z)) d\mu(z) < \infty
\end{equation*}
for $Re(s) \gg 0$.
\end{lemma}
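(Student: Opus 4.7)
The plan is to split $M$ based on proximity to $z_0$ and use \autoref{lem:distances_nonpositive_curvature} to trade one of the two factors $f_{i,s}$ for exponential decay in $d(z,z_0)$ whenever $z$ lies outside a fixed ball. Fix $R_0 > 0$, to be chosen large, and write $M = B(z_0, R_0) \cup (M \setminus B(z_0, R_0))$; the two pieces will be estimated separately.

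For the compact piece, I would use the exponential map $\exp_{z_0} : T_{z_0}M \to M$, which is a diffeomorphism by Cartan--Hadamard. Since $X$ and $Y$ are complete and totally geodesic, $X = \exp_{z_0}(T_{z_0}X)$ and $Y = \exp_{z_0}(T_{z_0}Y)$ globally. The intersection $X \cap Y$ is itself a complete totally geodesic submanifold of $M$ and a single point by hypothesis, which together with transversality forces $T_{z_0}X$ and $T_{z_0}Y$ to be complementary subspaces of $T_{z_0}M$. Writing $z \in B(z_0, R_0)$ via the exponential map as $(x,y) \in T_{z_0}X \oplus T_{z_0}Y$, the Riemannian distances satisfy $d_X(z) \asymp |y|$ and $d_Y(z) \asymp |x|$ with constants depending only on $R_0$, since on a compact ball the Riemannian and pulled-back flat metrics are bi-Lipschitz. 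The hypothesis $t f_{i,s}(t) = O(1)$ near $0$ yields $f_{i,s}(t) \leq C/t$, so the integral over $B(z_0, R_0)$ is bounded by a constant times
\[
\int_{|x| \leq R_0} \frac{dx}{|x|} \cdot \int_{|y| \leq R_0} \frac{dy}{|y|} = C' \left( \int_0^{R_0} r^{q-2} dr \right) \left( \int_0^{R_0} r^{p-2} dr \right),
\]
where $p = \mathrm{codim}(X)$ and $q = \mathrm{codim}(Y)$ satisfy $p + q = \dim M$; both one-dimensional integrals are finite because $p, q \geq 2$.

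For the non-compact piece, I would split $M \setminus B(z_0, R_0)$ into $M_X = \{d_X \geq d_Y\}$ and its complement and, by symmetry, only treat $M_X$. On $M_X$, \autoref{lem:distances_nonpositive_curvature} yields $d_X(z) \geq k \, d(z, z_0)$, and choosing $R_0 \geq 1/k$ puts us in the exponential-decay regime for $f_{1,s}$, so $f_{1,s}(d_X(z)) \leq C e^{-\mathrm{Re}(s) k \, d(z, z_0)}$ uniformly on $M_X \setminus B(z_0, R_0)$. I would then subdivide further according to the tube $T = \{d_Y < 1\}$. Off the tube the factor $f_{2,s}(d_Y)$ is uniformly bounded, and the Bishop--Gromov comparison (available because the sectional curvature of $M$ is bounded below) gives $\mathrm{vol}(B(z_0, r)) \leq C e^{\alpha r}$ for some $\alpha > 0$; passing to geodesic polar coordinates at $z_0$, the remaining exponential $e^{-\mathrm{Re}(s) k r}$ makes the integral convergent once $\mathrm{Re}(s) k > \alpha$. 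On $T$, I would use the normal exponential map $Y \times B_{\mathrm{eucl}}(0,1) \to M$ as parametrization: the volume element is comparable to $dA_Y(y) \, dv$; transverse to $Y$, the bound $f_{2,s}(|v|) \leq C/|v|$ is integrable because $\mathrm{codim}(Y) \geq 2$; tangentially, $e^{-\mathrm{Re}(s) k \, d(y, z_0)}$ (using $|d(z, z_0) - d(y, z_0)| \leq 1$) beats the at-most-exponential volume growth of $Y$, which inherits a sectional curvature lower bound as a totally geodesic submanifold of $M$.

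The main subtlety is the simultaneous control of the two singularities near their common point of intersection together with the exponential-decay trade-off at infinity: near $z_0$ both singularities appear at once, and their joint integrability depends critically on both codimensions exceeding one (which, combined with transversality and $X \cap Y = \{z_0\}$, also forces complementarity); far from $z_0$ the exponential volume growth of $M$ and of the tube about $Y$ must be beaten, so the whole estimate will hold only for $\mathrm{Re}(s)$ above an explicit threshold determined by the curvature lower bound and by the implicit constants in the asymptotic hypotheses on the $f_{i,s}$.
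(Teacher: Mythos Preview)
Your argument is correct and rests on the same three inputs as the paper's proof: \autoref{lem:distances_nonpositive_curvature} to convert one of the factors into exponential decay in $d(z,z_0)$, the sectional curvature lower bound to control volume growth of geodesic spheres (in $M$ and in $Y$), and the codimension $\geq 2$ hypothesis to make the $1/t$ singularity locally integrable transversally. The difference is in the decomposition. The paper splits $M$ into the two unit tubes $U_X$, $U_Y$ and their complement $U$: on $U$ both factors are bounded and \autoref{lem:distances_nonpositive_curvature} gives the decay directly; on $U_X \setminus U_Y$ one fibres over $X$ via the normal exponential map and integrates the $1/d_X$ singularity over each normal disk, with the decay in $d(z,z_0)$ coming from the remaining factor $f_{2,s}(d_Y)$. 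The overlap $U_X \cap U_Y$ is bounded (again by \autoref{lem:distances_nonpositive_curvature}) and the paper simply asserts local integrability there.

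Your decomposition---a fixed ball around $z_0$ versus its complement, the latter split by $\{d_X \geq d_Y\}$ and by the unit tube around $Y$---is a genuine reorganisation rather than a different idea, but it has the advantage that you actually \emph{prove} the local integrability near $z_0$ that the paper only states. Your use of the complementary splitting $T_{z_0}M = T_{z_0}X \oplus T_{z_0}Y$ (forced by transversality together with $X\cap Y=\{z_0\}$) to separate the two singularities into a product of one-variable integrals is clean and makes the role of the codimension hypothesis transparent. The paper's decomposition is slightly quicker to write down because it avoids the extra case split on $d_X \gtrless d_Y$; yours is more self-contained.

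One small point worth making explicit in your tube step: the uniform comparability of the volume element on the unit tube to $dA_Y \times dv$ needs the Jacobian of the normal exponential map along $Y$ to be bounded above at bounded normal distance, which is exactly where the sectional curvature lower bound enters again (Rauch comparison). You gesture at this but it would be good to say it.
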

\begin{proof}
Let $U_X=\{z \in M| d_X(z) \leq 1 \}$ and $U_Y=\{z \in M | d_Y(z) \leq 1\}$ be tubular neighborhoods around $X$ and $Y$ of radius $1$. Let $U=M-(U_X \cup U_Y)$. It suffices to show that $f_s(z)=f_{1,s}(d_X(z))f_{2,s}(d_Y(z))$ is integrable when restricted to $U$, $U_X$ and $U_Y$.

Consider first the integral over $U$. By hypothesis, the functions $f_{1,s}(d_X(z))$ and $f_{2,s}(d_Y(z))$ are bounded on $U$. By \autoref{lem:distances_nonpositive_curvature}, there exists a constant $k>0$ such that
\begin{equation*}
f_s(z)= O(e^{-Re(s)\cdot k \cdot d(z,z_0)})
\end{equation*}
for $z \in U$. Let $S(z_0,t)$ be the geodesic sphere with center $z_0$ and radius $t$ and denote by $A(t)$ its area.
Since $M$ has curvature that is bounded below, there exists $\rho>0$ such that $A(t)=O(e^{\rho \cdot t})$ (see \citep[Thm. III.4.4]{Chavel}). It follows that
\[
\int_U f_s(z)d\mu(z) < \infty
\]
whenever $Re(s)>\rho/k$.

Consider now the integral over $U_X$ (the same argument works for $U_Y$ by symmetry). Note that $f_s(z)$ is locally integrable, so it sufffices to integrate over $U_X-(U_X \cap U_Y)$. The inclusion $i:X \subset U_X$ admits a left inverse $\pi: U_X \rightarrow X$ whose fibers are diffeomorphic to the closed unit disk in $\mathbb{C}$ (this is because the exponential map from the total space of the normal bundle of $X$ to $M$ is a diffeomorphism). 
We can compute the integral over $U_X$ by first integrating over the fibers of $\pi$ and then integrating over $X$. By hypothesis, the integral of $f_s(z)$ over $\pi^{-1}(z)$ is $O(e^{-Re(s)d(z,z_0)})$ for every $z \in X-(X \cap U_Y)$. Now the resulting integral over $X$ converges for $Re(s) \gg 0$ since the area of a sphere of radius $t$ in $X$ is $O(e^{\rho \cdot t})$ as above.
\end{proof}

\noindent We can now prove that $\Phi(v,w,z,s)_\Gamma$ is integrable on $X_\Gamma$. Recall that $\mathbb{D}$ carries an $H(\mathbb{R})$-invariant Riemannian metric; it induces an invariant metric on $\wedge^2 T^*\mathbb{D}$ that we denote by $||\cdot||$.

\begin{proposition}\label{prop:Current_Sum_Converges_L1}
Let $v,w \in V(F)$ be vectors spanning a totally positive plane. For $Re(s) \gg 0$, the sum $\Phi(v,w,z,s)_\Gamma$ converges outside a set of measure zero in $X_\Gamma$ and defines an $L^1$ section of $(\wedge^2 T^*X_\Gamma,||\cdot||)$. 
\end{proposition}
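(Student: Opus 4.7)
The approach is to bound $\int_{X_\Gamma}\|\Phi(v,w,\cdot,s)_\Gamma\|\,d\mu$ by unfolding and then appeal to \autoref{lem:integrable_fcn_M} applied in a normal slice. By \autoref{prop:normal_convergence}, the defining series converges normally on an open set of full measure in $\mathbb{D}^+$, so $\Phi(v,w,\cdot,s)_\Gamma$ is well defined almost everywhere on $X_\Gamma$. The triangle inequality for the invariant norm $\|\cdot\|$, combined with the standard unfolding argument, yields
\[
\int_{X_\Gamma}\|\Phi(v,w,z,s)_\Gamma\|\,d\mu(z) \;\leq\; 2\int_{\Gamma_{v,w}\backslash\mathbb{D}^+}\|\omega(v,w,z,s)\|\,d\mu(z).
\]

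\textbf{Reduction via $H_U(\mathbb{R})$-invariance.} By \autoref{lem:phi_vwzs}(1) applied to both $\phi(v,w,\cdot,s)$ and $\phi(w,v,\cdot,s)$, the form $\omega(v,w,\cdot,s)$, and hence its invariant norm $\|\omega(v,w,\cdot,s)\|$, is invariant under the action of $H_U(\mathbb{R})$ with $U=\mathrm{span}(v,w)$. A Cartan-type decomposition $G=G_U A_U^+ K_{z_0}$ (with $A_U^+$ a two-parameter positive cone whose boundary rays correspond respectively to the $\mathbb{D}_v^+$- and $\mathbb{D}_w^+$-directions), together with the inclusion $\Gamma_{v,w}\subset G_U$, disintegrates the right-hand side as
\[
\mathrm{vol}(X(v,w)_\Gamma)\cdot \int_{N}\|\omega(v,w,z,s)\|\,d\mu_N(z),
\]
where $N$ is a totally geodesic, complete, nonpositively curved $4$-real-dimensional slice through a fixed base point of $\mathbb{D}_U^+$, transverse to $\mathbb{D}_U^+$ in $\mathbb{D}^+$ -- it arises from the symmetric pair $(SO(V_1),S(O(U)\times O(U^\perp)))$. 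The volume factor $\mathrm{vol}(X(v,w)_\Gamma)$ is finite by arithmeticity of $\Gamma_{v,w}$.

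\textbf{Conclusion via \autoref{lem:integrable_fcn_M}.} Inside $N$, the divisors $\mathbb{D}_v^+\cap N$ and $\mathbb{D}_w^+\cap N$ are complete totally geodesic $2$-real-dimensional submanifolds meeting transversely only at the chosen base point -- in particular of real codimension $2>1$ -- and $N$ inherits a lower bound on sectional curvature from $\mathbb{D}^+$. The expansion \eqref{eq:phi_expansion} and the asymptotics of \autoref{lem:phi_vwzs}(4), applied separately to $\phi(v,w)$ and $\phi(w,v)$ and combined through the Leibniz rule used to define $\omega$, supply a pointwise bound of the form
\[
\|\omega(v,w,z,s)\| \;\leq\; C\cdot f_{1,s}\bigl(d_{\mathbb{D}_v^+}(z)\bigr)\cdot f_{2,s}\bigl(d_{\mathbb{D}_w^+}(z)\bigr)
\]
with $t\cdot f_{i,s}(t)=O(1)$ as $t\to 0$ and $f_{i,s}(t)=O(e^{-Re(s)\cdot t})$ as $t\to\infty$, matching the hypotheses of \autoref{lem:integrable_fcn_M}. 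That lemma applied to $N$ then gives convergence of the slice integral for $Re(s)\gg 0$, completing the argument.

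\textbf{Main obstacle.} The principal technical point is setting up the Cartan decomposition $G=G_U A_U^+ K_{z_0}$ with its associated measure disintegration and verifying that the normal slice $N$ genuinely satisfies the hypotheses of \autoref{lem:integrable_fcn_M}. Reading off the pointwise bound on $\|\omega\|$ in the invariant metric from \eqref{eq:phi_expansion} also requires care: one must note that $\partial\bar\partial\phi(v,w)$ has only a logarithmic singularity along $\mathbb{D}_v^+$, while the term $\bar\partial\phi(w,v)\wedge\partial\phi(v,w)$ contributes the dominant $(d_{\mathbb{D}_v^+}\cdot d_{\mathbb{D}_w^+})^{-1}$ factor which is exactly what the codimension-$2$ hypothesis in \autoref{lem:integrable_fcn_M} is designed to handle.
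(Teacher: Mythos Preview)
Your approach is essentially the same as the paper's: unfold to $\int_{\Gamma_{v,w}\backslash\mathbb{D}^+}\|\omega\|\,d\mu$, use invariance under the joint stabilizer of $v$ and $w$ to reduce to a $4$-real-dimensional problem, and feed that into \autoref{lem:integrable_fcn_M}. The paper phrases the reduction slightly differently: rather than extracting a totally geodesic slice $N\subset\mathbb{D}^+$ via a Cartan decomposition $G=G_U A_U^+ K_{z_0}$, it passes directly to the quotient $M=H'(\mathbb{R})\backslash\mathbb{D}^+$ with $H'(\mathbb{R})=(H_v)_+(\mathbb{R})\cap(H_w)_+(\mathbb{R})$, and then takes $X=H'(\mathbb{R})\backslash\mathbb{D}_v^+$ and $Y=H'(\mathbb{R})\backslash\mathbb{D}_w^+$ inside $M$.

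The quotient formulation buys you a cleaner verification of the hypotheses of \autoref{lem:integrable_fcn_M}: simple connectedness of $M$, $X$, $Y$ follows immediately from the observation (due to Kudla--Millson) that the inclusions $\mathbb{D}_{v,w}^+\subset\mathbb{D}_v^+\subset\mathbb{D}^+$ are zero sections of equivariant real vector bundles, so after dividing by $H'(\mathbb{R})$ one gets a chain $\{*\}\subset X\subset M$ of zero sections. Your slice formulation is equivalent in content, but you then owe a direct check that $N$ itself is complete, totally geodesic, simply connected, and of nonpositive bounded-below curvature, and that $\mathbb{D}_v^+\cap N$, $\mathbb{D}_w^+\cap N$ remain totally geodesic in $N$ --- none of which is hard, but the quotient route sidesteps it. Your analysis of the singularity structure of $\|\omega\|$ and the role of the codimension-$2$ hypothesis is correct and matches the paper's use of \autoref{lem:derivatives_loc_int}.
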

\begin{proof}
The sum converges for $z \notin Z(v)_\Gamma \cup Z(w)_\Gamma$ by \autoref{prop:normal_convergence}, and this set has measure zero. Thus it remains to prove integrability. We need to show that
\begin{equation*}
\int_{X_\Gamma}|| \Phi(v,w,z,s)|| d\mu(z)
\end{equation*}
is convergent, where $d\mu(z)$ denotes an invariant volume form on $\mathbb{D}^+$. By Fubini's theorem, it suffices to show that
\begin{equation*}
\int_{\Gamma_{v,w}\backslash \mathbb{D}^+} ||\omega(w,v,z,s) ||  d \mu(z) < \infty.
\end{equation*}
Let $H'(\mathbb{R})=(H_v)_+(\mathbb{R}) \cap (H_w)_+(\mathbb{R})$ and let $Z_{H'}(\mathbb{R})$ be the center of $H'(\mathbb{R})$. Since the integrand is left invariant under $H'(\mathbb{R})$ by \hyperref[lem:phi_vwzs]{Lemma \ref*{lem:phi_vwzs}} and the quotient $Z_{H'}(\mathbb{R})\Gamma_{v,w}\backslash H'(\mathbb{R})$ has finite volume (see \citep{BorelBook}), this is equivalent to
\begin{equation*}
\int_{H'(\mathbb{R})\backslash \mathbb{D}^+} || \omega(w,v,z,s) ||  d \mu(z) < \infty. \qquad (*)
\end{equation*}
We now apply \autoref{lem:integrable_fcn_M}. Namely, let $M=H'(\mathbb{R}) \backslash \mathbb{D}^+$. Let $X=H'(\mathbb{R})\backslash \mathbb{D}_v^+$ and $Y=H'(\mathbb{R}) \backslash \mathbb{D}_w^+$. Note that there is a map $\pi : \mathbb{D}^+ \rightarrow \mathbb{D}_v^+$ that is left inverse to the inclusion $\mathbb{D}_v^+ \subset \mathbb{D}^+$ and turns $\mathbb{D}^+$ into an $H_v(\mathbb{R})_+$-equivariant real vector bundle of rank $2$ over $\mathbb{D}_v^+$ (see \citep[p. 26]{KudlaMillsonTubes}). Hence the inclusions
\begin{equation*}
\{*\}=H'(\mathbb{R})\backslash \mathbb{D}_{v,w}^+ \subset H'(\mathbb{R}) \backslash \mathbb{D}_v^+ \subset H'(\mathbb{R}) \backslash \mathbb{D}^+
\end{equation*}
are diffeomorphic to zero sections of vector bundles, in particular they are simply connected. Moreover $\mathbb{D}_v^+$ and $\mathbb{D}_w^+$ are totally geodesic submanifolds of $\mathbb{D}^+$, and the latter is known to have sectional curvatures that are bounded below and everywhere nonpositive. Hence $X$, $Y$ and $M$ satisfy the hypotheses in \autoref{lem:distances_nonpositive_curvature} and \autoref{lem:integrable_fcn_M}. Moreover, by \autoref{lem:derivatives_loc_int}, the integrand also satisfies the hypotheses in \autoref{lem:integrable_fcn_M}; applying it gives $(*)$ and hence the assertion.
\end{proof}

Since $\Phi(v,w,z,s)_\Gamma$ is an integrable section of $\wedge^2 T^*X_\Gamma$, its coordinates in any chart $U \subset X_\Gamma$ are locally integrable functions. Thus $\Phi(v,w,z,s)_\Gamma$ defines a current on $X_\Gamma$.

\begin{definition}
Let $v,w \in V(F)$ be vectors spanning a totally positive definite plane. For $Re(s) \gg 0$, define a current $[\Phi(v,w,s)_\Gamma] \in \mathcal{D}^{1,1}(X_\Gamma)$ by
\begin{equation} \label{eq:Phi(v,w,s)_Gamma_current_def}
[\Phi(v,w,s)_\Gamma](\omega)=\int_{X_\Gamma} \Phi(v,w,s)_\Gamma \wedge \omega
\end{equation}
for $\omega \in \mathcal{A}_c^{n-1,n-1}(X_\Gamma)$.
\end{definition}

Recall that we assume $\Gamma=H_+(\mathbb{Q}) \cap K$ for some open compact $K \subset H(\mathbb{A}_f)$. For $h \in H(\mathbb{A}_f)$, we write $\Gamma_h=H_+(\mathbb{Q}) \cap hKh^{-1}$ and we define
\begin{equation} \label{eq:Phi(v,w,h,s)_Gamma_def}
\Phi(v,w,h,s)_\Gamma = \Phi(v,w,s)_{\Gamma_h},
\end{equation}
an $L^1$ section of $\wedge^2 T^*(\Gamma_h \backslash \mathbb{D}^+)$. As above, we denote by $[\Phi(v,w,h,s)_\Gamma]$ the associated current in $\mathcal{D}^{1,1}(\Gamma_h \backslash \mathbb{D}^+)$.

\subsection{Some properties of $[\Phi(v,w,h,s)_\Gamma]$} \label{subsection:properties_Phi_vwhs_Gamma} 
We now introduce another family of currents $[\Phi(v,w)_\Gamma]$ on $X_\Gamma$. These currents are obtained by restricting a compactly supported form $\omega \in \mathcal{A}_c^{n-1,n-1}(X_\Gamma)$ to a special divisor $X(v)_\Gamma$ and integrating it against a Green function of the form \eqref{def:Green_current_G(v)_Gamma}. In this section we will prove that the the current $[\Phi(v,w,s)_\Gamma]$ introduced above, regarded modulo $im(\partial)+im(\overline{\partial})$, admits meromorphic continuation to the complex plane $s$ and that the current $[\Phi(v,w)_\Gamma]$ is cohomologous to the current obtained as the constant term of the meromorphic continuation of $[\Phi(v,w,s)_\Gamma]$ at a certain value $s=s_0$.

For $v \in V(F)$ of totally positive norm, denote by $\delta_{X(v)_\Gamma} \in \mathcal{D}^{1,1}(X_\Gamma)$ the current of integration along $X(v)_\Gamma$. That is, for $\omega \in \mathcal{A}_c^{n-1,n-1}(X_\Gamma)$, we have
\begin{equation} \label{eq:delta(X(v)_Gamma)_def}
\delta_{X(v)_\Gamma}(\omega)=\int_{X(v)_\Gamma}\omega.
\end{equation}
Consider now $v,w \in V(F)$ spanning a totally positive definite plane. In \autoref{subsection:Green_currents} we recalled the construction (see \citep{Bruinier, OdaTsuzuki}) of a function $G(v,w)_\Gamma \in \mathcal{C}^\infty(X(v)_\Gamma - Z(v,w)_\Gamma)$. The function has a logarithmic singularity along $Z(v,w)_\Gamma$, hence is locally integrable on $X(v)_\Gamma$ and defines an element of $\mathcal{D}^0(X(v)_\Gamma)$ that we denote by $[G(v,w)_\Gamma]$. Recall that there is a pushforward map
\begin{equation}
f_*:\mathcal{D}^0(X(v)_\Gamma) \rightarrow \mathcal{D}^{1,1}(X_\Gamma)
\end{equation}
induced by $f:X(v)_\Gamma \rightarrow X_\Gamma$ and defined by $(f_*(\alpha),\omega)=(\alpha,f^*(\omega))$ for $\alpha \in \mathcal{D}^0(X(v)_\Gamma)$ and $\omega \in \mathcal{A}_c^{n-1,n-1}(X_\Gamma)$.

\begin{definition}
Let $v,w \in V(F)$ spanning a totally positive definite plane. Define the current $[\Phi(v,w)_\Gamma] \in \mathcal{D}^{1,1}(X_\Gamma)$ by
\begin{equation} \label{def:Phi(v,w)_Gamma_current}
[\Phi(v,w)_\Gamma]=2\pi i \cdot f_*([G(v,w)_\Gamma]).
\end{equation}
For $h \in H(\mathbb{A}_f)$ and $K \subset H(\mathbb{A}_f)$ such that $\Gamma=H_+(\mathbb{Q}) \cap K$, define
\begin{equation} \label{eq:Phi(v,w,h)_Gamma_current}
[\Phi(v,w,h)_\Gamma] = [\Phi(v,w)_{\Gamma_h}],
\end{equation}
where $\Gamma_h=H_+(\mathbb{Q}) \cap hKh^{-1}$.
\end{definition}

That is, for $\omega \in \mathcal{A}_c^{n-1,n-1}(X_\Gamma)$, we have
\begin{equation}\label{eq:Phi(v,w)_Gamma_current}
[\Phi(v,w)_\Gamma](\omega)=2\pi i \cdot \int_{X(v)_\Gamma} G(v,w)_\Gamma \cdot \omega.
\end{equation}
See \ref{subsubsection:example_connected_currents} for an example.

The following proposition relates the currents $[\Phi(v,w)_\Gamma]$ and $[\Phi(v,w,s)_\Gamma]$ and is key to the computation of values of $[\Phi(v,w)_\Gamma]$ on forms obtained as theta lifts as below. Let
\begin{equation}
\tilde{\mathcal{D}}^{1,1}(X_\Gamma)=\mathcal{D}^{1,1}(X_\Gamma)/(im(\partial)+im(\overline{\partial})).
\end{equation}
We denote the class of $[\Phi(v,w,s)_\Gamma]$ (resp. $[\Phi(v,w)_\Gamma]$) in $\tilde{\mathcal{D}}^{1,1}(X_\Gamma)$ still by $[\Phi(v,w,s)_\Gamma]$ (resp. $[\Phi(v,w)_\Gamma]$).

\begin{proposition}\label{prop:cohomologous_currents}
The current $[\Phi(v,w,s)_\Gamma] \in \tilde{\mathcal{D}}^{1,1}(X_\Gamma)$ admits meromorphic continuation to $s \in \mathbb{C}$. Let $CT_{s=s_0} [\Phi(v,w,s)_\Gamma] \in \tilde{\mathcal{D}}^{1,1}(X_\Gamma)$ denote the constant term of $[\Phi(v,w,s)_\Gamma]$ at $s=s_0$. Then
\begin{equation}
CT_{s=s_0} [\Phi(v,w,s)_\Gamma]=[\Phi(v,w)_\Gamma]
\end{equation}
as elements of $\tilde{\mathcal{D}}^{1,1}(X_\Gamma)$.
\end{proposition}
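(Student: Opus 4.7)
The plan is to reduce the proposition to the meromorphic continuation of $G(v,w,z,s)_\Gamma$ recalled in Section~\ref{subsection:Green_currents} by establishing, for $\mathrm{Re}(s)\gg 0$, an equivalence of the form
\[
[\Phi(v,w,s)_\Gamma]\;\equiv\;2\pi i\cdot f_*[G(v,w,s)_\Gamma] \quad\text{in}\quad \tilde{\mathcal{D}}^{1,1}(X_\Gamma).
\]
Once this is proved, the meromorphic continuation of $[\Phi(v,w,s)_\Gamma]\in\tilde{\mathcal{D}}^{1,1}(X_\Gamma)$ is inherited from the right-hand side, and the constant-term identity follows from $CT_{s=s_0}G(v,w,s)_\Gamma=G(v,w)_\Gamma$ \eqref{eq:def_G(v,w,z)_Gamma} together with the definition \eqref{def:Phi(v,w)_Gamma_current} of $[\Phi(v,w)_\Gamma]$.

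The starting observation is the tautology $\omega(v,w,z,s)=\overline{\partial}\eta(v,w,z,s)$ on $\mathbb{D}^+\setminus(\mathbb{D}_v^+\cup\mathbb{D}_w^+)$, where $\eta(v,w,z,s):=\phi(w,v,z,s)\,\partial\phi(v,w,z,s)$. Using the local expansion \eqref{eq:phi_expansion} together with Lemma~\ref{lem:phi_vwzs} and arguments parallel to those of Lemma~\ref{lem:derivatives_loc_int}, Proposition~\ref{prop:normal_convergence} and Proposition~\ref{prop:Current_Sum_Converges_L1}, one checks that $\eta$ is $L^1_{\mathrm{loc}}$ on $\mathbb{D}^+$, that its $\Gamma$-translates satisfy the required normal convergence away from the special divisors, and that the Poincar\'e series
\[
\eta(v,w,s)_\Gamma \;:=\; 2\sum_{\gamma\in\Gamma_{v,w}\backslash\Gamma}\gamma^*\eta(v,w,z,s)
\]
defines a $(1,0)$-current on $X_\Gamma$.

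The heart of the argument is then a Poincar\'e--Lelong-type computation of $\overline{\partial}[\eta(v,w,s)_\Gamma]$ as a current. Locally near a generic point of $\mathbb{D}_v^+\setminus\mathbb{D}_w^+$, choose holomorphic coordinates $(z_1,\ldots,z_n)$ with $\mathbb{D}_v^+=\{z_1=0\}$: the expansion \eqref{eq:phi_expansion} gives $\partial\phi(v,w,z,s)=dz_1/(2z_1)+\alpha$ with $\alpha$ continuous, and since $\phi(w,v,z,s)$ is smooth in this neighborhood, the Leibniz rule for $\overline{\partial}$ applied to (smooth function)$\,\times\,$(current) combined with the distributional identity $\overline{\partial}(dz_1/z_1)=2\pi i\,[\{z_1=0\}]$ yields a residue term proportional to $\phi(w,v,z,s)|_{\mathbb{D}_v^+}\cdot[\mathbb{D}_v^+]$. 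Near $\mathbb{D}_w^+\setminus\mathbb{D}_v^+$ no residue appears, since $\phi(v,w,\cdot,s)$ is smooth there and $\overline{\partial}\log|z_1'|$ is only $L^1_{\mathrm{loc}}$. To globalise, I would use the coset decomposition $\Gamma_{v,w}\backslash\Gamma\cong(\Gamma_{v,w}\backslash\Gamma_v)\times(\Gamma_v\backslash\Gamma)$: by Lemma~\ref{lem:phi_vwzs}(3) and the $H(\mathbb{R})$-equivariance of $\phi^{(2)}_{\mathbb{D}_v}$, the inner sum over $\Gamma_{v,w}\backslash\Gamma_v$ of restricted residues reassembles on $\mathbb{D}_v^+$ into $G(v,w,z,s)_\Gamma$, while the outer sum over $\Gamma_v\backslash\Gamma$ realises, upon descent to $X_\Gamma$, the pushforward under $f\colon X(v)_\Gamma\to X_\Gamma$. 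Together with the factor $2$ in the definition of $[\Phi(v,w,s)_\Gamma]$, this produces the desired equivalence.

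The main obstacle is this Poincar\'e--Lelong step together with the subsequent descent: one must track the Dirac residues at the divisors $\mathbb{D}_v^+$ carefully, reindex the Poincar\'e series via the coset decomposition above, and match all the normalisation constants (the factors of $2$ in \eqref{eq:Phi(v,w,z,s)_Gamma_def} and in the definition of $G(v,w,z,s)_\Gamma$, together with the sign conventions for $\overline{\partial}$ and for the Poincar\'e--Lelong formula) in order to arrive at the clean coefficient $2\pi i$.
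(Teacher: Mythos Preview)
Your approach is correct and shares the same core idea as the paper's proof: both exploit the identity $\omega(v,w,z,s)=\overline{\partial}\eta(v,w,z,s)$ with $\eta=\phi(w,v,z,s)\,\partial\phi(v,w,z,s)$, compute the distributional $\overline{\partial}$ via a Poincar\'e--Lelong step that produces a residue along $\mathbb{D}_v^+$, identify that residue with $\phi^{(2)}_{\mathbb{D}_v}(p_{v^\perp}(w),z,s)$ using Lemma~\ref{lem:phi_vwzs}(3), and then fold up to $G(v,w,\cdot,s)_\Gamma$.

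The organizational difference is where the integration by parts takes place. The paper first \emph{unfolds}: it writes $[\Phi(v,w,s)_\Gamma](\alpha)=2\int_{\Gamma_{v,w}\backslash\mathbb{D}^+}\omega\wedge\alpha$ and applies Stokes on $\Gamma_{v,w}\backslash\mathbb{D}^+$, where only a single copy of $\mathbb{D}_v^+$ and $\mathbb{D}_w^+$ appears. The price is that the pulled-back test form $\alpha$ is no longer compactly supported on this larger quotient, so the paper must prove that the boundary contribution on geodesic spheres $S_{v,w}(t)$ tends to zero as $t\to\infty$; this limiting argument (using Lemma~\ref{lem:distances_nonpositive_curvature} and the curvature bound) occupies the second half of the proof. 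Your route stays on $X_\Gamma$, forms the Poincar\'e series $[\eta_\Gamma]$ there, and computes $\overline{\partial}[\eta_\Gamma]$ chart by chart, relying on the local finiteness of the $\Gamma$-translates of $\mathbb{D}_v^+\cup\mathbb{D}_w^+$ together with the normal convergence of Proposition~\ref{prop:normal_convergence} to split the sum locally into finitely many singular terms plus a smooth remainder. This genuinely sidesteps the boundary-at-infinity argument, at the cost of a somewhat more elaborate bookkeeping of residues via the coset decomposition $\Gamma_{v,w}\backslash\Gamma\cong\coprod_{\Gamma_v\backslash\Gamma}\Gamma_{v,w}\backslash\Gamma_v$. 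Both routes are rigorous; the paper's is perhaps more transparent about where the analytic difficulty lies, while yours keeps the test forms compactly supported throughout.
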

\begin{proof}
Let $\alpha \in \mathcal{A}_c^{n-1,n-1}(X_\Gamma)$. By \autoref{prop:Current_Sum_Converges_L1}, we have
\begin{equation*}
[\Phi(v,w,s)_\Gamma](\alpha) = 2 \cdot \int_{\Gamma_{v,w} \backslash \mathbb{D}^+}\omega(v,w,z,s) \wedge \alpha(z).
\end{equation*}
For fixed $s$, write $g_v(z)=\phi(v,w,z,s)$ and $g_w(z)=\phi(w,v,z,s)$. We regard $g_v$ as a smooth function defined on $\Gamma_{v,w} \backslash \mathbb{D}^+-\Gamma_{v,w} \backslash \mathbb{D}_v^+$. If we choose an open $U \subset \Gamma_{v,w} \backslash \mathbb{D}^+$ such that the analytic divisor $(\Gamma_{v,w} \backslash \mathbb{D}_v^+) \cap U$ is given by the equation $z=0$, then it follows from \eqref{eq:phi_expansion} that
\begin{equation*}
\begin{split}
\partial{g_v}(z)=\frac{dz}{z} + o(|z|^{-1}), \\
\overline{\partial}{g_v}(z)=\frac{d\overline{z}}{\overline{z}} + o(|z|^{-1}).
\end{split}
\end{equation*}
Similar statements hold for $g_w(z)$ when $z$ approaches $\Gamma_{v,w} \backslash \mathbb{D}^+_w$. Denote by $\delta_v \in \mathcal{D}^{1,1}(\Gamma_{v,w} \backslash \mathbb{D}^+)$ the current given by integration on $\Gamma_{v,w} \backslash \mathbb{D}_v^+$. The following identity of currents on $\Gamma_{v,w} \backslash \mathbb{D}^+$ follows from Stokes's theorem appplied to $\Gamma_{v,w} \backslash \mathbb{D}^+-(\Gamma_{v,w} \backslash \mathbb{D}_v^+ \cup \Gamma_{v,w} \backslash \mathbb{D}_w^+)$:
\begin{equation}
\overline{\partial}[g_w \partial g_v]=[\overline{\partial}g_w \partial g_v] + [g_w \overline{\partial}\partial g_v] - 2 \pi i g_w \delta_v.
\end{equation}
We find that for any closed compactly supported form $\alpha_c \in \mathcal{A}_c^{n-1,n-1}(\Gamma_{v,w}\backslash \mathbb{D}^+)$:
\begin{equation} \label{eq:eq_proof_cohom_currents}
\int_{\Gamma_{v,w} \backslash \mathbb{D}^+}\omega(v,w,z,s) \wedge \alpha_c(z)=2\pi i \cdot \int_{\Gamma_{v,w}\backslash \mathbb{D}_v^+} \phi(w,v,z,s)\alpha_c(z)
\end{equation}
The form $\alpha(z)$ is not compactly supported, but we claim that \eqref{eq:eq_proof_cohom_currents} is still true for $Re(s)\gg 0$ when we replace $\alpha_c(z)$ by $\alpha(z)$. Assuming this for now and using that the restriction of $\phi(w,v,z,s)$ to $\mathbb{D}_v$ equals $\phi^{(2)}_{\mathbb{D}_v}(p_{v^\perp}(w),z,s)$, we conclude that for $Re(s) \gg 0$:
\begin{equation*}
\begin{split}
[\Phi(v,w,s)_\Gamma](\alpha)
&\equiv 2\pi i \cdot 2 \cdot \int_{\Gamma_{v,w}\backslash \mathbb{D}_v^+} \phi^{(2)}_{\mathbb{D}_v}(p_{v^\perp}(w),z,s)\alpha(z) \\
&=2\pi i \cdot \int_{\Gamma_{v}\backslash \mathbb{D}_v^+} 2 \cdot \sum_{\gamma \in \Gamma_{v,w}\backslash \Gamma_v} \phi^{(2)}_{\mathbb{D}_v}(p_{v^\perp}(w),\gamma z,s)\alpha(z) \\
&=2\pi i \cdot \int_{\Gamma_{v}\backslash \mathbb{D}_v^+} G(p_{v^\perp}(w),z,s)_{\Gamma_v} \cdot \alpha(z).
\end{split}
\end{equation*}
This last equation defines a current on $X_\Gamma$ that admits meromorphic continuation to $s \in \mathbb{C}$ and whose constant term at $s=s_0$ is given by $[\Phi(v,w)_\Gamma]$; the claim follows from this.

It only remains to show that \eqref{eq:eq_proof_cohom_currents} still holds when we replace $\alpha_c(z)$ by $\alpha(z)$. Let $X=\Gamma_{v,w} \backslash \mathbb{D}^+$ and consider the submanifolds $X_v=\Gamma_{v,w} \backslash \mathbb{D}_v^+$ and $X_w=\Gamma_{v,w} \backslash \mathbb{D}_w^+$ of $X$. Let $X_{v,w}=X_v \cap X_w=\Gamma_{v,w}\backslash \mathbb{D}_{v,w}^+$. As remarked by \citep[p.26]{KudlaMillsonTubes}, the exponential map of the normal bundle of $X_{v,w} \subset X$ is a diffeomorphism, and hence $X$ carries a natural vector bundle structure $\pi:X \rightarrow X_{v,w}$ of rank $4$ over $X_{v,w}$ with totally geodesic fibers. For $t>0$, let $X_v(t)=\{z \in X | d_{X_v}(z) \leq t \}$ be the tubular neighborhood of radius $t$ around $X_v$; here $d_{X_v}(z)$ denotes the geodesic distance between $z$ and $X_v$. Define $X_w(t)$ and $X_{v,w}(t)$ similarly and let $X(t)=X_{v,w}(t)-(X_v(1/t)\cup X_w(1/t))$. Then we have $X-(X_v \cup X_w)=\cup_{t \geq 1}X(t)$ and
\[
\int_X \omega(v,w,z,s) \wedge \alpha=\lim_{t \rightarrow \infty} \int_{X(t)}\omega(v,w,z,s) \wedge \alpha.
\]
Denote by $S_{v,w}(t)=\partial X_{v,w}(t)$ the boundary of $X_{v,w}(t)$. By Stokes's theorem,  \eqref{eq:eq_proof_cohom_currents} is equivalent to
\[
\int_{S_{v,w}(t)-(X_v(1/t) \cup X_w(1/t))} \phi(w,v,z,s) \partial \phi(v,w,z,s) \wedge \alpha \rightarrow 0
\]
as $t \rightarrow \infty$. Since $||\alpha||$ is bounded, it suffices to show that
\begin{equation} \label{eq:Stokes_aux_1}
\int_{S_{v,w}(t)-(X_v(1/t) \cup X_w(1/t))} |\phi(w,v,z,s)| \cdot ||\partial \phi(v,w,z,s)|| d\mu(z) \rightarrow 0
\end{equation}
as $t \rightarrow \infty$. Now let $H'(\mathbb{R})=(H_v)_+(\mathbb{R}) \cap (H_w)_+(\mathbb{R})$ and note that the integrand is invariant under $H'(\mathbb{R})$. Let $M=H'(\mathbb{R}) \backslash \mathbb{D}^+$ and consider the submanifolds $X=H'(\mathbb{R})\backslash \mathbb{D}_v^+$ and $Y=H'(\mathbb{R})\backslash \mathbb{D}_w^+$ of $M$, whose intersection is a single point $z_0$. Let $S(z_0,t)$ be the sphere of geodesic radius $t$ around $z_0$ and let $X(1/t)$ and $Y(1/t)$ be tubular neighborhoods of $X$ and $Y$ with radius $1/t$. Since $X_{v,w}$ has finite volume by \citep{BorelBook}, to show that the integrals in \eqref{eq:Stokes_aux_1} tend to $0$ it suffices to show that
\[
\int_{S(z_0,t)-(X(1/t)\cup Y(1/t))}|\phi(w,v,z,s)| \cdot ||\partial \phi(v,w,z,s)|| d\mu(z) \rightarrow 0
\]
as $t \rightarrow \infty$. Now \autoref{lem:distances_nonpositive_curvature} and \hyperref[lem:phi_vwzs]{Lemma \ref*{lem:phi_vwzs}} show that the integrand is $O(t \cdot e^{-k \cdot Re(s)t})$ for some positive constant $k>0$. Since the sectional curvatures of $M$ are bounded below, we have $Area(S(z_0,t))=O(e^{\rho \cdot t})$ for some positive constant $\rho>0$ and hence \eqref{eq:eq_proof_cohom_currents} holds, with $\alpha_c$ replaced by $\alpha$, for $Re(s)>\rho/k$.
\end{proof}

\subsection{Currents on $X_K$} \label{subsection:currents_X_K}

We introduce now currents in $\mathcal{D}^{1,1}(X_K)$. Fix a neat open compact subgroup $K \subset H(\mathbb{A}_f)$ and recall that we write
\begin{equation*}
X_K =H(\mathbb{Q}) \backslash (\mathbb{D} \times H(\mathbb{A}_f))/K.
\end{equation*}
Thus $X_K$ is a compact complex manifold with finitely many components. These were described in \autoref{section:Shimura_varieties}: choose $h_1=1,\ldots,h_r \in H(\mathbb{A}_f)$ such that
\begin{equation*}
H_+(\mathbb{Q}) \backslash H(\mathbb{A}_f)/K = \coprod_{j=1}^r H_+(\mathbb{Q})h_jK.
\end{equation*}
For $h \in H(\mathbb{A}_f)$, we write $\Gamma_h=H_+(\mathbb{Q}) \cap hKh^{-1}$ (and $\Gamma=\Gamma_1$). Then
\begin{equation*}
X_K \cong \coprod_{j=1}^r \Gamma_{h_j} \backslash \mathbb{D}^+. 
\end{equation*}
Let $v,w \in V(F)$ spanning a totally positive definite plane $U$ and recall that we denote by $H_U \subset H$ the pointwise stabilizer of $U$. For $h \in H(\mathbb{A}_f)$, let $K_{U,h}=H_U(\mathbb{A}_f) \cap hKh^{-1}$. Choose coset representatives $h'_i \in H_U(\mathbb{A}_f)$ such that
\begin{equation} \label{eq:double_coset_H_U}
(H_U)_+(\mathbb{Q}) \backslash H_U(\mathbb{A}_f)/K_{U,h}=\coprod_{i=1}^s (H_U)_+(\mathbb{Q})h_i'K_{U,h}
\end{equation}
and write $h'_i h=\gamma_i h_j k_i$ with $\gamma_i \in H_+(\mathbb{Q})$, $k_i \in K$ and $h_j=h_{j(i)}$ a coset representative as in \eqref{eq:connected_components_X_K}. Note that the double coset $(H_U)_+(\mathbb{Q})\gamma_i \Gamma_{h_j}$ is well defined, that is, it is independent of the choice of $h_i'$ and decomposition $h'_i h=\gamma_i h_j k_i$.

\begin{definition}
Assume that $n>2$. We define $\Phi(v,w,h,s)_K$ to be the section of $\wedge^2T^*(X_K)$ whose restriction to the connected component $\Gamma_{h_j} \backslash \mathbb{D}^+$ is 
\begin{equation} \label{eq:Phi(v,w,h,s)_K_def_n>2}
\sum_{i \rightarrow j} \Phi(\gamma_i^{-1}v,\gamma_i^{-1}w,h_j,s)_{\Gamma}
\end{equation}
where the sum runs over those $i$ such that $j(i)=j$.
\end{definition}

Note that this is well defined because of the invariance property \eqref{eq:equivariance_Phi_Gamma}. For $n=2$ we give a different definition. Namely, assume that $n=2$ and choose $\gamma_0$ in $H(\mathbb{Q})$ such that $\gamma_0^{-1} \mathbb{D}_U^+=\mathbb{D}_U^-$. With $h_i'$ as in \eqref{eq:double_coset_H_U}, write $\gamma_0 h'_i h=\gamma_{i_0} h_{j_0} k_{i_0}$ with $\gamma_{i_0} \in H_+(\mathbb{Q})$, $k_{i_0} \in K$ and $h_{j_0}=h_{j_0(i_0)}$ a coset representative as in \eqref{eq:connected_components_X_K}. As above, the double coset $(H_U)_+(\mathbb{Q})\gamma_{i0} \Gamma_{h_{j_0}}$ is well defined.

\begin{definition}
Assume that $n=2$. We define $\Phi(v,w,h,s)_K$ to be the section of $\wedge^2T^*(X_K)$ whose restriction to the connected component $\Gamma_{h_j} \backslash \mathbb{D}^+$ is 
\begin{equation} \label{eq:Phi(v,w,h,s)_K_def_n=2}
\sum_{i \rightarrow j} \Phi(\gamma_i^{-1}v,\gamma_i^{-1}w,h_j,s)_{\Gamma}+\sum_{i_0 \rightarrow j} \Phi(\gamma_{i_0}^{-1}v,\gamma_{i_0}^{-1}w,h_j,s)_{\Gamma}
\end{equation}
where the sums run over those $i$ (resp. $i_0$) such that $j(i)=j$ (resp. $j_0(i_0)=j$).
\end{definition}

The forms $\Phi(v,w,h,s)_K$ are locally integrable on $X_K$. We denote by 
\begin{equation} \label{eq:def_[Phi(v,w,h,s)_K]}
[\Phi(v,w,h,s)_K] \in \mathcal{D}^{1,1}(X_K)
\end{equation}
the corresponding current on $X_K$.

We also define a current 
\begin{equation} \label{eq:Phi(v,w,h)_K_def}
[\Phi(v,w,h)_K] \in \mathcal{D}^{1,1}(X_K)
\end{equation}
whose restriction to the connected component $\Gamma_{h_j} \backslash \mathbb{D}^+$ is 
\begin{equation}
\begin{split}
\sum_{i \rightarrow j} [\Phi(\gamma_i^{-1}v,\gamma_i^{-1}w,h_j)_{\Gamma}], \quad \text{ if } n>2, \\
\sum_{i \rightarrow j} [\Phi(\gamma_i^{-1}v,\gamma_i^{-1}w,h_j)_{\Gamma}]+\sum_{i_0 \rightarrow j} [\Phi(\gamma_{i_0}^{-1}v,\gamma_{i_0}^{-1}w,h_j)_{\Gamma}], \quad \text{ if } n=2,
\end{split}
\end{equation}
with the currents in the sum as in \eqref{eq:Phi(v,w,h)_Gamma_current}. See \ref{subsubsection:example_adelic_current} for an example.

\begin{remark}
The above definitions reflect the structure of the connected components of the special cycles $Z(v,w,h)_K$ in \autoref{subsection:special_cycles}. Namely, let $v,w \in V(F)$ be vectors spanning a totally positive definite plane and $h \in H(\mathbb{A}_f)$. Attached to such a pair there are Shimura varieties $X(v,w,h)_K$ and $X(v,h)_K$ (see \eqref{eq:def_X(U,h)_K}) together with proper maps
\begin{equation}
X(v,w,h)_K \overset{\iota}{\rightarrow} X(v,h)_K \overset{f}{\rightarrow} X_K.
\end{equation}
Then $\iota_*([X(v,w,h)_K])$ defines a divisor on $X(v,h)_K$, and one can define a Green function $G(v,w,h)_K$ on $X(v,h)_K$ with a logarithmic singularity along $\iota_*([X(v,w,h)_K])$ as a finite sum of functions of the form \eqref{eq:def_G(v,w,z)_Gamma}. Writing $[G(v,w,h)_K]$ for the current in $\mathcal{D}^0(X_K)$ associated with $G(v,w,h)_K$, it follows from Kudla's description of the connected components of the cycles $Z(v,w,h)_K$ (see \citep[Lemma 4.1]{KudlaOrthogonal}) that
\[
[\Phi(v,w,h)_K]=2\pi i \cdot f_*([G(v,w,h)_K]).
\]
\end{remark}

The following lemma summarizes some basic properties of the forms $\Phi(v,w,h,s)_K$; these properties are analogous to those of special cycles proved in \citep[Lemma 2.2]{KudlaOrthogonal}. Recall that for every $h \in H(\mathbb{A}_f)$ there is a map
\begin{equation}
r(h):X_{hKh^{-1}} \rightarrow X_K
\end{equation}
sending $H(\mathbb{Q})(z,h')hKh^{-1}$ to $H(\mathbb{Q})(z,h'h)K$. The map $r(h)$ is an isomorphism of complex manifolds, and we denote by $\Phi \mapsto \Phi \cdot h$ the induced map defined on sections of the bundle of differential forms.

\begin{lemma} \phantomsection \label{lemma:PhivwsK_properties}
\begin{enumerate}
\item $\Phi(v,w,hk,s)_K=\Phi(v,w,h,s)_K$ for all $k \in K$.
\item $\Phi(v,w,h_Uh,s)_K=\Phi(v,w,h,s)_K$ for all $h_U \in H_U(\mathbb{A}_f)$.
\item $\Phi(\gamma v, \gamma w, \gamma h,s)_K=\Phi(v,w,h,s)_K$ for all $\gamma \in H(\mathbb{Q})$.
\item $\Phi(v,w,h_1h^{-1},s)_{hKh^{-1}}\cdot h=\Phi(v,w,h_1,s)_K$ for all $h_1,h \in H(\mathbb{A}_f)$.
\end{enumerate}
\end{lemma}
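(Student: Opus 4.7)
The plan is to verify each of the four statements by unraveling the definition of $\Phi(v,w,h,s)_K$ in \eqref{eq:Phi(v,w,h,s)_K_def_n>2} and \eqref{eq:Phi(v,w,h,s)_K_def_n=2} and tracking how the defining data---coset representatives $h'_i$ for $(H_U)_+(\mathbb{Q})\backslash H_U(\mathbb{A}_f)/K_{U,h}$ together with the decompositions $h'_i h = \gamma_i h_j k_i$---transform under the operations on the two sides. In each case a change of variables will match the sums summand-by-summand, yielding the claimed equality on every connected component $\Gamma_{h_j}\backslash \mathbb{D}^+$ of $X_K$.

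Properties (1) and (2) are routine. For (1), when $k \in K$ one has $K_{U,hk}= H_U(\mathbb{A}_f) \cap hk\,K\,k^{-1}h^{-1} = K_{U,h}$, so the same representatives $h'_i$ serve, and the decomposition $h'_i(hk) = \gamma_i h_j(k_i k)$ only alters the factor in $K$; the triple $(\gamma_i, h_j,\text{component})$ is unchanged. For (2), the stabilizer transforms by conjugation as $K_{U, h_U h} = h_U K_{U,h} h_U^{-1}$, so $\{h'_i h_U^{-1}\}$ is a system of representatives for the new double cosets, and $(h'_i h_U^{-1})(h_U h) = h'_i h = \gamma_i h_j k_i$ recovers the original decomposition term by term.

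For (3), the key observation is that the spinor norm is invariant under conjugation, hence $\gamma (H_U)_+(\mathbb{Q}) \gamma^{-1} = (H_{\gamma U})_+(\mathbb{Q})$ for every $\gamma \in H(\mathbb{Q})$, and representatives for $(H_{\gamma U})_+(\mathbb{Q})\backslash H_{\gamma U}(\mathbb{A}_f)/K_{\gamma U, \gamma h}$ can be taken to be $\gamma h'_i \gamma^{-1}$. The decomposition becomes $(\gamma h'_i \gamma^{-1})(\gamma h) = \gamma \gamma_i\cdot h_j\cdot k_i$; when $\gamma \in H_+(\mathbb{Q})$ the rational factor $\gamma\gamma_i$ still lies in $H_+(\mathbb{Q})$, and the corresponding summand equals
\[
\Phi((\gamma\gamma_i)^{-1}\gamma v,\, (\gamma \gamma_i)^{-1}\gamma w,\, h_j, s)_\Gamma = \Phi(\gamma_i^{-1}v,\, \gamma_i^{-1}w,\, h_j, s)_\Gamma,
\]
matching the defining sum of $\Phi(v,w,h,s)_K$. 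When $\gamma \notin H_+(\mathbb{Q})$ (which can occur only for $n=2$ under our hypotheses), one writes $\gamma\gamma_i = \gamma'_i \gamma_0^{-1}$ with $\gamma'_i \in H_+(\mathbb{Q})$ and $\gamma_0$ the orientation-reversing element fixed in the definition \eqref{eq:Phi(v,w,h,s)_K_def_n=2}; the summand is then absorbed into the second sum of \eqref{eq:Phi(v,w,h,s)_K_def_n=2}, and the roles of the two sums are interchanged, yielding the desired equality.

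For (4), set $K' = hKh^{-1}$. A direct check gives $K_{U, h_1 h^{-1}} = H_U(\mathbb{A}_f) \cap h_1 h^{-1}(hKh^{-1})hh_1^{-1} = K_{U, h_1}$, so the same $h'_i$ may be used, and the decomposition reads $h'_i(h_1 h^{-1}) = \gamma_i (h_j h^{-1})(h k_i h^{-1})$ with $h k_i h^{-1} \in K'$; moreover $\{h_j h^{-1}\}$ serves as a system of coset representatives for the connected components of $X_{K'}$. The essential computation is $\Gamma^{K'}_{h_j h^{-1}} = H_+(\mathbb{Q}) \cap (h_jh^{-1})K'(h_jh^{-1})^{-1} = H_+(\mathbb{Q}) \cap h_j K h_j^{-1} = \Gamma_{h_j}$, so both $\Phi(\gamma_i^{-1}v,\gamma_i^{-1}w,h_j h^{-1}, s)_\Gamma$ and $\Phi(\gamma_i^{-1}v,\gamma_i^{-1}w,h_j, s)_\Gamma$ are defined with respect to the same arithmetic group. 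Since $r(h)$ sends $[z, h_j h^{-1}]_{K'}$ to $[z, h_j]_K$, the pullback by $r(h)$ identifies the components under these labellings, and the term-by-term matching of the two sums yields (4). The principal obstacle is the bookkeeping in property (3) when $\gamma \notin H_+(\mathbb{Q})$: one must verify carefully that the two sums in \eqref{eq:Phi(v,w,h,s)_K_def_n=2} are swapped in a coherent fashion, which amounts to checking that every $(H_U)_+(\mathbb{Q})$-coset of the second kind corresponds under left multiplication by $\gamma$ to one of the first kind, and vice versa.
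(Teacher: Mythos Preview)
Your proof follows essentially the same approach as the paper's: track how the coset representatives $h'_i$ and the decompositions $h'_i h = \gamma_i h_j k_i$ transform under each operation, and match summands. Your treatment of (1), (2), and (4) is correct and, if anything, more explicit than the paper's (which simply declares (1) obvious and states the relevant change of representatives for (2) and (4) without further comment).

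There is one inaccuracy in your handling of (3). You assert that $\gamma \notin H_+(\mathbb{Q})$ ``can occur only for $n=2$ under our hypotheses''; this is not true as stated, since the spinor norm $\nu: H(\mathbb{Q}) \to F^\times$ hits elements that are not totally positive for every $n$. What \emph{is} true is that for $n>2$ the issue can be sidestepped: by \citep[Lemma 5.5]{KudlaOrthogonal}, $H_+(\mathbb{Q})$ acts transitively on $\Omega_T(F)$, so for any $\gamma \in H(\mathbb{Q})$ one can write $\gamma = \gamma_+ h_U$ with $\gamma_+ \in H_+(\mathbb{Q})$ and $h_U \in H_U(\mathbb{Q}) \subset H_U(\mathbb{A}_f)$, after which property (2) reduces the claim to the case $\gamma = \gamma_+ \in H_+(\mathbb{Q})$ that you (and the paper) handle directly. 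The paper's proof of (3) writes only the decomposition $(\gamma\gamma_i)h_j k_i$, which is valid precisely when $\gamma \in H_+(\mathbb{Q})$, so it is implicitly relying on the same reduction; your proof is more careful in flagging the issue, but the justification you give for dismissing it when $n>2$ should be replaced by the argument above. Your $n=2$ swapping argument is the right idea for that case.
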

\begin{proof} Part $(1)$ is obvious. Part $(2)$ follows from the fact that for any complete set $\{h_i'|i=1,\ldots,s\}$ of coset representatives for
\[
S(U,h,K)=(H_U)_+(\mathbb{Q}) \backslash H_U(\mathbb{A}_f)/K_{U,h},
\]
the set $\{h_i'h_U^{-1}|i=1,\ldots,s\}$ is a complete set of representatives for $S(U,h_Uh,K)$. To prove part $(3)$, note that given any set $\{h_i'|i=1,\ldots,s\}$ as above and any $\gamma \in H(\mathbb{Q})$, the elements $\gamma h_i' \gamma^{-1}$ for $i=1,\ldots,s$ form a complete set of representatives for $S(\gamma(U),\gamma h,K)$, so that writing $\gamma h_i' \gamma^{-1} \cdot (\gamma h)=(\gamma \gamma_i)h_j k_i$ with $j=j(i)$ leads to
\[
\left. \Phi(\gamma v, \gamma w, \gamma h, s)_K \right|_{\Gamma_{h_j}\backslash \mathbb{D}^+}= \sum_{i \rightarrow j} \Phi((\gamma \gamma_i)^{-1} \gamma v,(\gamma \gamma_i)^{-1} \gamma w, z, s)_{\Gamma_{h_j}}
\]
\[
=\sum_{i \rightarrow j} \Phi(\gamma_i^{-1} v,\gamma_i^{-1} w, z, s)_{\Gamma_{h_j}}=\left. \Phi(v,w,h,s)_K \right|_{\Gamma_{h_j}\backslash \mathbb{D}^+}
\]
as was to be shown. Finally, $(4)$ follows directly from the fact that if $\{h_j|j=1,\ldots,r\}$ is a set of coset representatives for $H_+(\mathbb{Q})\backslash H(\mathbb{A}_f)/K$, then $\{h_jh^{-1}|j=1,\ldots,r\}$ is a set of coset representatives for $H_+(\mathbb{Q})\backslash H(\mathbb{A}_f)/hKh^{-1}$.
\end{proof}

Assume that $K' \subset K$, with $K'$ an open compact subgroup of $H(\mathbb{A}_f)$ and let  $pr:X_{K'} \rightarrow X_K$ be the natural projection map. The following lemma computes $pr^*(\Phi(v,w,h,s)_K)$.

\begin{lemma} \label{lemma:pullback_Phi_K}
Let $K' \subset K$ be as above. Then
\[
pr^*(\Phi(v,w,h,s)_K)=\sum_{k \in h^{-1}K_{U,h}h \backslash K/K'} \Phi(v,w,hk,s)_{K'}.
\]
\end{lemma}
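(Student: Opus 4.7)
The plan is first to reduce to the case $h=1$ using Lemma \ref{lemma:PhivwsK_properties}(4): applying $r(h)$ to both sides translates the purported identity, via the isomorphism $X_{K'} \cong X_{hK'h^{-1}}$, into the analogous statement for $(v,w,1,s)$ at the conjugated levels $\tilde K = hKh^{-1}$ and $\tilde K' = hK'h^{-1}$. Since the double coset spaces $h^{-1}K_{U,h}h \backslash K/K'$ and $\tilde K_{U,1}\backslash \tilde K/\tilde K'$ are in natural bijection, it suffices to prove that
\begin{equation*}
pr^* \Phi(v,w,1,s)_K = \sum_{k \in K_U \backslash K/K'}\Phi(v,w,k,s)_{K'}
\end{equation*}
with $K_U = H_U(\mathbb{A}_f) \cap K$.

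I would then verify this identity connected component by connected component on $X_{K'}$. Fix coset representatives $\{h_\alpha\}$ and $\{h'_\beta\}$ for $H_+(\mathbb{Q})\backslash H(\mathbb{A}_f)/K$ and $H_+(\mathbb{Q})\backslash H(\mathbb{A}_f)/K'$ respectively, and for each $\beta$ write $h'_\beta = \gamma_0 h_{\alpha(\beta)} k_0$ with $\gamma_0 \in H_+(\mathbb{Q})$ and $k_0 \in K$; then the restriction of $pr$ to the component $\Gamma'_{h'_\beta}\backslash \mathbb{D}^+$ is induced by the biholomorphism $\gamma_0^{-1}:\mathbb{D}^+ \to \mathbb{D}^+$ followed by projection to $\Gamma_{h_{\alpha(\beta)}}\backslash \mathbb{D}^+$. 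Unpacking the definitions, the restrictions of both sides of the displayed equation to this component become Poincar\'e-type sums
\begin{equation*}
\sum_\gamma 2 \cdot \omega(\gamma^{-1}v, \gamma^{-1}w, z, s)
\end{equation*}
indexed by certain $\Gamma'_{h'_\beta,U}$-orbits of elements $\gamma \in H(\mathbb{Q})$, and the task reduces to showing that these indexing sets coincide.

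The key combinatorial input is the decomposition
\begin{equation*}
H_U(\mathbb{A}_f) \cdot K / K' = \coprod_{k \in K_U \backslash K/K'} H_U(\mathbb{A}_f) \cdot kK'/K',
\end{equation*}
which, after quotienting by $(H_U)_+(\mathbb{Q})$ on the left, yields
\begin{equation*}
(H_U)_+(\mathbb{Q}) \backslash H_U(\mathbb{A}_f) K/K' = \coprod_{k \in K_U \backslash K/K'} (H_U)_+(\mathbb{Q}) \backslash H_U(\mathbb{A}_f)/K_{U,k}.
\end{equation*}
Via the assignment $(h'_{(i)}, k) \mapsto h'_{(i)}k$, this identifies the disjoint union over $k$ of the index sets used to define the summands $\Phi(v,w,k,s)_{K'}$ with a refinement of the index set used for $\Phi(v,w,1,s)_K$; under this bijection the relation $h'_{(i)}k = \gamma_{(i,k)} h'_{\beta(i,k)} k'_{(i,k)}$ produces, up to $\Gamma'_{h'_\beta}$-equivalence, the same elements $\gamma$ as those arising from the left-hand side after the pullback by $\gamma_0^{-1}$.

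The main obstacle is purely notational: carefully tracking how the parameters $\gamma_0, k_0, \gamma_i, k_i$ transform under the refinement $K' \subset K$, so that one obtains term-by-term agreement of the Poincar\'e series on both sides. There is no analytic subtlety, since Proposition \ref{prop:normal_convergence} guarantees normal convergence outside the union of special divisors, and the equivariance formula $\gamma^*\omega(v,w,s)(z) = \omega(\gamma^{-1}v, \gamma^{-1}w, z, s)$ together with Lemma \ref{lem:phi_vwzs}(1)--(2) ensures that all the rearrangements involved are legitimate.
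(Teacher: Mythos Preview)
Your approach is correct and leads to the same result, but it differs from the paper's proof in one organizational respect. The paper does \emph{not} reduce to $h=1$; instead it works directly on a component $\Gamma_{h_j}\backslash \mathbb{D}^+$ and invokes the bijection of \citep[Lemma~5.7.i)]{KudlaOrthogonal} between the double coset $(H_U)_+(\mathbb{Q}) \backslash H_U(\mathbb{A}_f) \cap H_+(\mathbb{Q}) h_jKh^{-1}/K_{U,h}$ and the set of $\Gamma_{h_j}$-orbits on
\[
S(v,w,h_jKh^{-1}) := H_+(\mathbb{Q})\cdot (v,w) \cap h_jKh^{-1}\cdot (v,w).
\]
This converts the restriction of $\tfrac{1}{2}\Phi(v,w,h,s)_K$ to a plain sum $\sum_{(v',w')\in S(v,w,h_jKh^{-1})}\omega(v',w',z,s)$, which is then decomposed as $\sum_{k}\sum_{(v',w')\in S(v,w,h_jK'(hk)^{-1})}\omega(v',w',z,s)$, and the same bijection reassembles each inner sum into $\Phi(v,w,hk,s)_{K'}$. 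Your combinatorial identity $(H_U)_+(\mathbb{Q}) \backslash H_U(\mathbb{A}_f) K/K' = \coprod_{k} (H_U)_+(\mathbb{Q}) \backslash H_U(\mathbb{A}_f)/K_{U,k}$ is essentially the double-coset shadow of this decomposition; you avoid citing Kudla's lemma at the cost of the preliminary reduction via Lemma~\ref{lemma:PhivwsK_properties}(4). One point you should make explicit: for $n=2$ the definition of $\Phi(v,w,h,s)_K$ carries a second sum (over the $i_0$ indexed by $\gamma_0 h'_i h$), and your argument must be applied to that sum as well; the paper handles this by invoking \citep[Lemma~5.7.ii)]{KudlaOrthogonal}.
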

\begin{proof} Note that the sum on the right hand side is well defined by $(1)$ and $(2)$ of \hyperref[lemma:PhivwsK_properties]{Lemma \ref*{lemma:PhivwsK_properties}}. Now consider the restriction of $\Phi(v,w,h,s)_K$ to ${\Gamma_{h_j} \backslash \mathbb{D}^+}$. By definition, this is the sum
\[
\sum_{i \in I} \Phi(\gamma_i^{-1}v,\gamma_i^{-1}w,h,s)_{\Gamma_{h_j}},
\]
where $\gamma_i \in H_+(\mathbb{Q})$ satisfies $\gamma_i h_j k_i=h'_i h$ for some $k_i \in K$ and $h'_i \in H_U(\mathbb{A}_f)$, with $\{h'_i|i \in I \}$ a complete set of representatives of the double coset
\[
(H_U)_+(\mathbb{Q}) \backslash H_U(\mathbb{A}_f) \cap H_+(\mathbb{Q}) h_jKh^{-1}/K_{U,h}.
\]
Assume first that $n>2$. By \citep[Lemma 5.7.i)]{KudlaOrthogonal}, this double coset is in bijection with the set of $\Gamma_{h_j}$-orbits in 
\[
S(v,w,h_jKh^{-1}):=H_+(\mathbb{Q})\cdot (v,w) \cap h_jKh^{-1}\cdot (v,w).
\]
The bijection sends $\Gamma_{h_j}\cdot (v_i,w_i)$, where $(v_i,w_i)=\gamma_i \cdot(v,w)=h_jk_ih^{-1} \cdot (v,w)$ with $\gamma_i \in H_+(\mathbb{Q})$ and $k \in K$, to the double coset $(H_U)_+(\mathbb{Q})\gamma_i^{-1} h_jk_ih^{-1}K_{U,h}$. Substituting the definition of $\Phi(v,w,h,s)_{\Gamma_{h_j}}$ we see that the restriction of $(1/2)\cdot \Phi(v,w,h,s)_K$ to ${\Gamma_{h_j} \backslash \mathbb{D}^+}$ is given by
\[
\sum_{(v',w') \in S(v,w,h_jKh^{-1})} \omega(v',w',z,s).
\]
This sum can be rewritten as
\[
\sum_{k \in h^{-1}K_{U,h}h \backslash K /K'}\sum_{(v',w') \in S(v,w,h_jK'(hk)^{-1})} \omega(v',w',z,s)
\]
and the claim follows directly from this. The proof for $n=2$ proceeds similarly by using \citep[Lemma 5.7.ii)]{KudlaOrthogonal}.
\end{proof}

Analogous statements as those in \hyperref[lemma:PhivwsK_properties]{Lemma \ref*{lemma:PhivwsK_properties}} and \autoref{lemma:pullback_Phi_K} hold for the currents $[\Phi(v,w,h,s)_K]$ and $[\Phi(v,w,h)_K]$.

\subsection{Weighted currents} \label{subsection:weighted_currents} 

Following Kudla's definition of weighted cycles in [\citeyear{KudlaOrthogonal}], we introduce currents in $\mathcal{D}^{1,1}(X)=\varprojlim \mathcal{D}^{1,1}(X_K)$ as finite sums of the currents $[\Phi(v,w,h,s)_K]$ above weighted by the values of a Schwartz function $\mathcal{S}(V(\mathbb{A}_f)^2)$.

Given a totally positive definite symmetric matrix $T \in Sym_2(F)$, let
\begin{equation}
\Omega_T(\mathbb{A}_f)=\{ (v,w) \in V(\mathbb{A}_f)^2 \ | \  T(v,w)=T \},
\end{equation}
where $T(v,w)$ is defined in \eqref{eq:momentmatrix}. Assume that $\Omega_T(\mathbb{A}_f) \neq \emptyset$. Then there exists $(v_0,w_0) \in \Omega_T(\mathbb{A}_f) \cap V(F)^2$ by the Hasse principle for quadratic forms. Moreover, the action of $H(\mathbb{A}_f)$ on $\Omega_T(\mathbb{A}_f)$ is transitive by Witt's theorem. Let $K$ be a compact open subgroup of $H(\mathbb{A}_f)$. The orbits of $K$ on $\Omega_T(\mathbb{A}_f)$ are open, and so if $\varphi \in \mathcal{S}(V(\mathbb{A}_f)^2)$ is invariant under $K$, we have
\begin{equation} \label{eq:supp_varphi_K_decomp}
Supp(\varphi) \cap \Omega_T(\mathbb{A}_f) = \coprod_{i=1}^k K \xi_i^{-1} \cdot (v_0,w_0)
\end{equation}
for some elements $\xi_1,\ldots,\xi_k \in H(\mathbb{A}_f)$.

\begin{definition}
Let $T \in Sym_2(F)$ be a totally positive definite matrix and $\varphi \in \mathcal{S}(V(\mathbb{A}_f)^2)$ fixed by $K$. With $(v_0,w_0)$ and $\xi_i$ as above and $Re(s) \gg 0$, define
\[
\Phi(T,\varphi,s)_K=\sum_{i=1}^k \varphi(\xi_i^{-1} \cdot(v_0,w_0)) \cdot \Phi(v_0,w_0,\xi_i,s)_K.
\]
We denote by $[\Phi(T,\varphi,s)_K]$ the corresponding current in $\mathcal{D}^{1,1}(X_K)$.
\end{definition}

Note that $\Phi(T,\varphi,s)_K$ is independent of the choice of $\{\xi_1,\ldots,\xi_k\}$ by (1) and (2) of \hyperref[lemma:PhivwsK_properties]{Lemma \ref*{lemma:PhivwsK_properties}}. The behaviour of $\Phi(T,\varphi,s)_K$ under pullbacks coming from compact subgroups $K' \subset K$ is simpler than that of the forms $\Phi(v,w,h,s)_K$ defined above. The next proposition proves this and an equivariance property for the action of $H(\mathbb{A}_f)$.

\begin{proposition} \phantomsection \label{prop:weighted_currents_pullback_equivariance}
\begin{enumerate}
\item Let $K' \subset K$ be an open compact subgroup of $H(\mathbb{A}_f)$ and consider the natural map $pr:X_{K'} \rightarrow X_K$. Then
\[
pr^*(\Phi(T,\varphi,s)_K)=\Phi(T,\varphi,s)_{K'}.
\]
\item For any $h \in H(\mathbb{A}_f)$, we have
\[
\Phi(T,\omega(h)\varphi,s)_{hKh^{-1}} = \Phi(T,\varphi,s)_K \cdot h^{-1}.
\]
Here $\omega(h)\varphi$ denotes the Schwartz function defined by $\omega(h)\varphi(v,w)=\varphi(h^{-1}v,h^{-1}w)$.
\end{enumerate}
\end{proposition}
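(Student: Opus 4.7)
My plan is to prove both parts by reducing to properties of the unweighted currents $\Phi(v,w,h,s)_K$ (in particular \hyperref[lemma:PhivwsK_properties]{Lemma \ref*{lemma:PhivwsK_properties}} and \autoref{lemma:pullback_Phi_K}) via the orbit decomposition \eqref{eq:supp_varphi_K_decomp} of $\mathrm{Supp}(\varphi)\cap\Omega_T(\mathbb{A}_f)$. Throughout, write $U$ for the plane spanned by $(v_0,w_0)$ and note that, since $H_U$ is the pointwise stabilizer of $U$, the stabilizer of $(v_0,w_0)$ in $H(\mathbb{A}_f)$ is exactly $H_U(\mathbb{A}_f)$.

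For part (1), the first step is to refine the decomposition \eqref{eq:supp_varphi_K_decomp} under the smaller group $K'$. A direct computation with stabilizers shows that, for each fixed $i$,
\[
K\xi_i^{-1}\cdot(v_0,w_0)=\coprod_{k\in\xi_i^{-1}K_{U,\xi_i}\xi_i\backslash K/K'}K'\cdot(\xi_ik)^{-1}\cdot(v_0,w_0),
\]
so the double cosets appearing here are exactly those in the pullback formula of \autoref{lemma:pullback_Phi_K}. Using that $\varphi$ is $K$-invariant, so $\varphi(k^{-1}\xi_i^{-1}(v_0,w_0))=\varphi(\xi_i^{-1}(v_0,w_0))$ for $k\in K$, one obtains
\[
\Phi(T,\varphi,s)_{K'}=\sum_{i=1}^{k}\varphi(\xi_i^{-1}(v_0,w_0))\sum_{k\in\xi_i^{-1}K_{U,\xi_i}\xi_i\backslash K/K'}\Phi(v_0,w_0,\xi_ik,s)_{K'}.
\]
By \autoref{lemma:pullback_Phi_K} the inner sum equals $pr^{*}(\Phi(v_0,w_0,\xi_i,s)_K)$, and summing over $i$ yields $pr^{*}(\Phi(T,\varphi,s)_K)$.

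For part (2), observe first that $\mathrm{Supp}(\omega(h)\varphi)=h\cdot\mathrm{Supp}(\varphi)$ and that $\Omega_T(\mathbb{A}_f)$ is $H(\mathbb{A}_f)$-stable, so
\[
\mathrm{Supp}(\omega(h)\varphi)\cap\Omega_T(\mathbb{A}_f)=\coprod_{i=1}^{k}(hKh^{-1})\cdot(\xi_ih^{-1})^{-1}\cdot(v_0,w_0).
\]
Thus the orbits of $hKh^{-1}$ are represented by $\eta_i=\xi_ih^{-1}$, and $\omega(h)\varphi(\eta_i^{-1}(v_0,w_0))=\varphi(h^{-1}\eta_i^{-1}(v_0,w_0))=\varphi(\xi_i^{-1}(v_0,w_0))$. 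Applying part (4) of \hyperref[lemma:PhivwsK_properties]{Lemma \ref*{lemma:PhivwsK_properties}} with $h_1=\xi_i$ gives $\Phi(v_0,w_0,\xi_ih^{-1},s)_{hKh^{-1}}=\Phi(v_0,w_0,\xi_i,s)_K\cdot h^{-1}$, and summing over $i$ yields the claim.

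The only nontrivial point is the bookkeeping in step (1): one must verify that the parameters $k$ arising from refining $K$-orbits into $K'$-orbits coincide exactly with the double-coset set $\xi_i^{-1}K_{U,\xi_i}\xi_i\backslash K/K'$ from \autoref{lemma:pullback_Phi_K}. This is a direct stabilizer computation but is the only place where anything beyond formal manipulation is required; once established, both parts follow cleanly from the already-proved properties of $\Phi(v,w,h,s)_K$.
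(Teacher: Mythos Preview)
Your proof is correct and follows essentially the same approach as the paper: for part (1) both arguments reduce to \autoref{lemma:pullback_Phi_K} via the identification of the $K'$-orbits inside each $K$-orbit with the double coset $\xi_i^{-1}K_{U,\xi_i}\xi_i\backslash K/K'$, and for part (2) both invoke part (4) of \hyperref[lemma:PhivwsK_properties]{Lemma \ref*{lemma:PhivwsK_properties}} directly. The only cosmetic difference is that the paper starts from $pr^*(\Phi(T,\varphi,s)_K)$ and collapses the resulting double sum, whereas you start from $\Phi(T,\varphi,s)_{K'}$ and recognize the inner sum as the pullback; the underlying bookkeeping is identical.
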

\begin{proof} To prove part $(1)$, let $(v_0,w_0) \in \Omega_T(F)$ and denote by $H_U$ the pointwise stabilizer in $H$ of the plane spanned by $v_0$ and $w_0$. Note that the map $h \mapsto h^{-1} \cdot (v_0,w_0)$ induces a bijection $H_U(\mathbb{A}_f)\backslash H(\mathbb{A}_f) \cong \Omega_T(\mathbb{A}_f)$. Now we use \autoref{lemma:pullback_Phi_K} and obtain
\[
pr^*(\Phi(T,\varphi,s)_K)=\sum_{h \in H_U(\mathbb{A}_f) \backslash H(\mathbb{A}_f)/K} \omega(h)\varphi(v_0,w_0) \cdot pr^*(\Phi(v_0,w_0,h,s)_K)
\]
\[
=\sum_{h \in H_U(\mathbb{A}_f) \backslash H(\mathbb{A}_f)/K} \sum_{k \in h^{-1}K_{U,h}h \backslash K/K'} \omega(h)\varphi(v_0,w_0)  \Phi(v,w,hk,s)_{K'}
\]
\[
\sum_{h \in H_U(\mathbb{A}_f) \backslash H(\mathbb{A}_f)/K'} \omega(h)\varphi(v_0,w_0) \cdot \Phi(v_0,w_0,h,s)_{K'}=\Phi(T,\varphi,s)_{K'}.
\]
Part $(2)$ follows directly from part $(4)$ of \hyperref[lemma:PhivwsK_properties]{Lemma \ref*{lemma:PhivwsK_properties}}.
\end{proof}
We can also define a weighted version of the currents $[\Phi(v,w,h)_K]$ in \eqref{eq:Phi(v,w,h)_K_def}. Namely, for $T \in Sym_2(F)_{\gg 0}$ and $\varphi \in \mathcal{S}(V(\mathbb{A}_f)^2)$ fixed by $K$ as above and $\xi_i$ as in \eqref{eq:supp_varphi_K_decomp}, let
\begin{equation} \label{eq:Phi(T,varphi)_K_def}
[\Phi(T,\varphi)_K]=\sum_{i=1}^k \varphi(\xi_i^{-1} \cdot (v_0,w_0)) \cdot [\Phi(v_0,w_0,\xi_i)_K] \in \mathcal{D}^{1,1}(X_K).
\end{equation}
See \ref{subsubsection:example_weighted_current} for an example. It follows from $(1)$ in \hyperref[prop:weighted_currents_pullback_equivariance]{Proposition \ref*{prop:weighted_currents_pullback_equivariance}} that the currents $[\Phi(T,\varphi,s)_K]$ and $[\Phi(T,\varphi)_K]$ are compatible under inclusions $K'\subset K$ and hence one can define
\begin{equation} \label{eq:Limit_currents}
\begin{split}
[\Phi(T,\varphi,s)] & =([\Phi(T,\varphi,s)_K])_K \in \mathcal{D}^{1,1}(X)=\varprojlim_K \mathcal{D}^{1,1}(X_K), \\
[\Phi(T,\varphi)]   & =([\Phi(T,\varphi)_K])_K \in \mathcal{D}^{1,1}(X).
\end{split}
\end{equation}
Moreover, the space $\mathcal{D}^{1,1}(X)$ carries a natural left action of $H(\mathbb{A}_f)$ induced by the maps $r(h)^{-1}:X_K \rightarrow X_{hKh^{-1}}$; we denote the action of $h \in H(\mathbb{A}_f)$ on $\Phi \in \mathcal{D}^{1,1}(X)$ by $\Phi \cdot r(h)^{-1}$. Then, for any $h \in H(\mathbb{A}_f)$, we have
\begin{equation}
\begin{split}
[\Phi(T,\omega(h)\varphi,s)]&=[\Phi(T,\varphi,s)]\cdot r(h)^{-1}, \\
[\Phi(T,\omega(h)\varphi)]&=[\Phi(T,\varphi)]\cdot r(h)^{-1}.
\end{split}
\end{equation}
That is, the assignments $T \otimes \varphi \mapsto [\Phi(T,\varphi,s)]$ and $T \otimes \varphi \mapsto [\Phi(T,\varphi)]$ induce linear maps
\begin{equation}
\mathbb{C}[Sym_2(F)_{>0}] \otimes \mathcal{S}(V(\mathbb{A}_f)^2) \rightarrow \mathcal{D}^{1,1}(X)
\end{equation}
that are $H(\mathbb{A}_f)$-equivariant.

\subsection{A regularized theta lift} \label{subsection:current_as_thetalift} From now on and to avoid dealing with metaplectic groups, we will assume that $V$ has even dimension over $F$. Our next goal is to show that, for $Re(s) \gg 0$, the form $\Phi(T,\varphi,s)$ can be obtained as a regularized theta lift. More precisely, below we introduce a function $\mathcal{M}_T(g,s)$ defined on a certain subgroup of $Sp_4(\mathbb{A}_F)$ and a theta function $\theta(g;\varphi)$ that takes values in $\mathcal{A}^{1,1}(X)$. We then define a regularized theta lift $(\mathcal{M}_T(s),\theta(\cdot;\varphi))^{reg}$. The main result of this section (\autoref{proposition:theta_lift_convergence}) shows that the regularized theta lift converges on an open dense subset of $X$ and moreover agrees with $\Phi(T,\varphi,s)$ there. The next two subsections define the functions just mentioned.

\subsubsection{Schwartz forms} For $z \in \mathbb{D}$, note that the map $v \mapsto Q(v_{z^\perp})-Q(v_z)$ defines a positive definite quadratic form on $V_1$. We write
\begin{equation}\label{eq:Siegel_Gaussian}
\varphi^0(v,z)=e^{-2\pi (Q(v_{z^\perp})-Q(v_z))}
\end{equation}
for the Gaussian associated with $z$. Note that $\varphi^0(v,z) \in \mathcal{S}(V_1) \otimes \mathcal{C}^\infty(\mathbb{D})$ and that it is fixed by $H(\mathbb{R})$, i.e. $\varphi^0(hx,hz)=\varphi^0(x,z)$ for every $h \in H(\mathbb{R})$. Now define
\begin{equation}
\varphi^{1,1}(v,w,z) \in [\mathcal{S}(V_1^2) \otimes \mathcal{A}^{1,1}(\mathbb{D})]^{H(\mathbb{R})}
\end{equation}
by
\begin{equation} \label{eq:def_phi_11}
\begin{split}
\varphi^{1,1}(v,w,z) & =\overline{\partial}(\varphi^0(w,z) \partial \varphi^0(v,z)) \\
& = \overline{\partial}\varphi^0(w,z) \wedge \partial \varphi^0(v,z) + \varphi^0(w,z) \overline{\partial} \partial \varphi^0(v,z).
\end{split}
\end{equation}

For a quadratic vector space $(W,Q)$ with positive definite quadratic form, let $\varphi^0_+(v,w) \in \mathcal{S}(W^2)$ be the standard Gaussian defined by
\begin{equation} \label{eq:pos_def_Gaussian}
\varphi^0_+(v,w)=e^{-2\pi (Q(v)+Q(w))}.
\end{equation}
For $v \in V(\mathbb{R})$, denote by $v_i$, $i=1,\ldots,d$ the image of $v$ under the natural map $V(\mathbb{R}) \rightarrow V \otimes_{F,\sigma_i}\mathbb{R}$. Define
\begin{equation}
\varphi^{1,1}_\infty  \in [\mathcal{S}(V(\mathbb{R})^2) \otimes \mathcal{A}^{1,1}(\mathbb{D})]^{H(\mathbb{R})}
\end{equation}
by
\begin{equation} \label{eq:def_phi_1,1_infty}
\varphi^{1,1}_\infty(v,w,z)=\varphi^{1,1}(v_1,w_1,z) \otimes \varphi^0_+(v_2,w_2) \otimes \ldots \otimes \varphi^0_+(v_d,w_d).
\end{equation}

Denote by $\omega=\omega_\psi$ the Weil representation of $Sp_4(\mathbb{A}_F)$ on $\mathcal{S}(V(\mathbb{A})^2)$ with respect to our fixed character $\psi$ (see e.g. \citep{KudlaRallis1} for explicit formulas). For $g=(g_f,g_\infty) \in Sp_4(\mathbb{A}_F)$, $h \in H(\mathbb{A}_f)$ and $\varphi \in \mathcal{S}(V(\mathbb{A}_f)^2)$ fixed by an open compact subgroup $K$ of $H(\mathbb{A}_f)$, the theta function
\begin{equation}\label{eq:theta_current}
\theta(g;\varphi)_K=\sum_{(v,w) \in V(F)^2} \omega(g_f)\varphi(v,w) \cdot \omega(g_\infty)\varphi^{1,1}_\infty(v,w)
\end{equation}
defines a $(1,1)$-form on $X_K$.
\subsubsection{Regularized lifts} Let $\kappa=\frac{n+2}{2}$. For $a \in \mathbb{R}_{>0}$, define:
\begin{equation}
W_a(y)=\frac{(4\pi a)^{\kappa-1}}{\Gamma(\kappa-1)}\cdot y^{\kappa/2}e^{-2\pi a y}, \ \ \ y>0.
\end{equation}
Note that
\begin{equation}\label{eq:W_a_integral}
\int_{0}^{\infty}W_{a}(y) y^{\kappa/2}e^{-2\pi a y} \frac{dy}{y^2}=1.
\end{equation}
Consider the following subgroups of $Sp_{4,F}$:
\begin{equation}
N(k)=\left\{n=n(X)=\left(\begin{array}{cc} 1_2 & X \\ & 1_2 \end{array} \right) | X= {}^tX \in Sym_2(k) \right\},
\end{equation}
\begin{equation}
A(k)=\left\{ a=m(t,v)=\left(\begin{array}{cccc} y & & & \\ & t & & \\ & & y^{-1} & \\ & & & t^{-1} \end{array} \right) | y,t \in k^{\times} \right\}.
\end{equation}
Let $dn$ be the unique Haar measure on $N(\mathbb{A})$ such that $Vol(N(F) \backslash N(\mathbb{A}),dn)=1$. Denote by $A(\mathbb{R})^0$ the connected component of the identity in $A(\mathbb{R})$. Let $da$ be the measure on $A(\mathbb{R})^0$ defined by
\begin{equation}
\int_{A(\mathbb{R})^0}f(a)da=\int_{(\mathbb{R}_{>0})^d}\int_{(\mathbb{R}_{>0})^d}f(m(y_1^{1/2},t_1^{1/2}),\ldots,m(y_d^{1/2},t_d^{1/2}))
\frac{dy_1}{y_1^2}\frac{dt_1}{t_1^2}\ldots \frac{dy_d}{y_d^2}\frac{dt_d}{t_d^2},
\end{equation}
where $dy_i$, $dt_i$ denote the Lebesgue measure.

For a symmetric matrix $T \in Sym_2(F)$, define a character $\psi_T:N(F) \backslash N(\mathbb{A}) \rightarrow \mathbb{C}^\times$ by $\psi_T(n(X))=\psi(tr(TX))$. For such a symmetric matrix $T=\left(\begin{smallmatrix} a & b \\ b & c \end{smallmatrix} \right)$ and $i=1,\ldots,d$, we write
\begin{equation*}
\sigma_i(T)=\left(\begin{smallmatrix} a_i & b_i \\ b_i & c_i \end{smallmatrix} \right)
\end{equation*}
where $a_i=\sigma_i(a)$, $b_i=\sigma_i(b)$ and $c_i=\sigma_i(c)$. We also write $T^\iota=\left(\begin{smallmatrix} c & b \\ b & a \end{smallmatrix} \right)$.
\begin{definition}
For $T=\left(\begin{smallmatrix} a & b \\ b & c \end{smallmatrix} \right) \in Sym_2(F)$ totally positive definite, the function 
\begin{equation*}
\mathcal{M}_{T}(na,s): N(F)\backslash N(\mathbb{A}) \times  A(\mathbb{R})^0 \rightarrow \mathbb{C}
\end{equation*}
is defined by
\begin{equation}\label{eq:M_T}
\begin{split}
\mathcal{M}_T(nm(y^{1/2},t^{1/2}),s) & =(2\cdot \kappa_{\dim(V)}^{-1}) \cdot \overline{\psi_T(n)} \cdot M_{\sigma_1(T)}(y_1,s) M_{\sigma_1(T)^\iota}(t_1,s) \\
& \quad \cdot (y_1t_1)^{1-\frac{\kappa}{2}} \cdot \prod_{i=2}^d W_{a_i}(y_i)\cdot W_{c_i}(t_i),
\end{split}
\end{equation}
where $\kappa_4=2$ and $\kappa_n=1$ for $n>5$.
\end{definition}

Given a measurable function $f:Sp_4(\mathbb{A}_F) \rightarrow \mathbb{C}$ that satisfies $f(ng)=f(g)$ for all $n \in N(F)$, define
\begin{equation} \label{eq:def_regularized_theta_1}
(\mathcal{M}_T(s),f)^{reg}=\int_{A(\mathbb{R})^0}\int_{N(F) \backslash N(\mathbb{A})} \mathcal{M}_T(na,s)f(na) \, dnda,
\end{equation}
provided that the integral converges. 

\begin{proposition} \label{proposition:theta_lift_convergence}
Let $T \in Sym_2(F)$ be a positive definite symmetric matrix, $\varphi$ be a Schwartz form in $\mathcal{S}(V(\mathbb{A}_f)^2)$ fixed by an open compact subgroup $K \subset H(\mathbb{A}_f)$ and $\theta(g;\varphi)_K$ be the theta function defined in \eqref{eq:theta_current}. Then there is a dense open set $U \subseteq X_K$ with complement of measure zero such that for $Re(s) \gg 0$, the regularized theta lift
\begin{equation*}
(\mathcal{M}_T(s),\theta(\cdot;\varphi)_K)^{reg}
\end{equation*}
converges and equals $\Phi(T,\varphi,s)_K$ on $U$.
\end{proposition}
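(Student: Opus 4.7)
The plan is to unfold the $N(F)\backslash N(\mathbb{A})$ integral using the character $\psi_T$ and then evaluate the remaining $A(\mathbb{R})^0$ integration via the Laplace transform identity \eqref{eq:Laplace_transform}. The key representation-theoretic facts are that $\omega(n(X))$ multiplies $\varphi(v,w)$ by $\psi(tr(T(v,w) X))$, while $\omega(m(y_i^{1/2}, t_i^{1/2}))$ at each archimedean place $i$ rescales the Gaussian factors in a Laplace-compatible way.

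First I would substitute the definition \eqref{eq:theta_current} of $\theta(g;\varphi)_K$ into \eqref{eq:def_regularized_theta_1} and interchange the sum with the $N(F)\backslash N(\mathbb{A})$ integral. Integration of $\theta(na;\varphi)_K$ against $\overline{\psi_T(n)}$ picks out exactly those $(v,w) \in V(F)^2$ with moment matrix $T(v,w)=T$, namely the set $\Omega_T(F)$. By Witt's theorem, $\Omega_T(F) = H(F)\cdot (v_0,w_0)$ is a single $H(F)$-orbit, and combining the $K$-orbit decomposition \eqref{eq:supp_varphi_K_decomp} with the connected-component decomposition \eqref{eq:connected_components_X_K} rewrites this sum, indexed by the $\xi_i$ and by $\gamma \in \Gamma_{\xi_i, v_0, w_0} \backslash \Gamma_{\xi_i}$, as exactly the summands appearing in the definition of $\Phi(v_0,w_0,\xi_i,s)_K$. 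This reduces the claim to a pointwise identity between an archimedean integral and the form $\omega(v_0,w_0,z,s)$.

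The second step is the archimedean integration. At each place $i \geq 2$ (where $V$ is positive definite), $\omega(m(y_i^{1/2}, t_i^{1/2}))$ converts $\varphi^0_+(v,w)$ to $(y_it_i)^{\kappa/2} e^{-2\pi(y_iQ(v_i)+t_iQ(w_i))}$; since the character $\psi_T$ has pinned $Q(v_i)=a_i$ and $Q(w_i)=c_i$, the integral against $W_{a_i}(y_i) W_{c_i}(t_i)$ with the measure $dy_i/y_i^2\,dt_i/t_i^2$ collapses to $1$ by \eqref{eq:W_a_integral}. At the first place, after pulling the $z$-operator $\overline{\partial}(\,\cdot\,\partial\,\cdot\,)$ of \eqref{eq:def_phi_11} outside the integral, the $(y_1,t_1)$-integrand is precisely the tensor product of the integrands in the Laplace representations \eqref{eq:Laplace_transform} of $\phi(v_0,w_0,z,s)$ and of $\phi(w_0,v_0,z,s)$ (here $T(w_0,v_0)=T(v_0,w_0)^\iota$ matches the $M_{\sigma_1(T)^\iota}(t_1,s)$ factor in \eqref{eq:M_T}). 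By Fubini the iterated integral produces $\phi(w_0,v_0,z,s) \cdot \partial \phi(v_0,w_0,z,s)$, and the outer $\overline{\partial}$ yields $\omega(v_0,w_0,z,s)$. The prefactor $2\cdot \kappa_{\dim V}^{-1}$ and the exponents $y_1^{1-\kappa/2}$, $t_1^{1-\kappa/2}$ in \eqref{eq:M_T} are calibrated so that the final normalization matches the factor $2$ in \eqref{eq:Phi(v,w,z,s)_Gamma_def}.

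The main obstacle is justifying all of these interchanges. Take $U \subset X_K$ to be the complement of the finite union of special cycles $Z(v_0,\xi_i)_K \cup Z(w_0,\xi_i)_K$, which has measure zero. For $z \in U$, the inner sum of $\Gamma$-translates converges normally together with its derivatives by \autoref{prop:normal_convergence}, so passing $\overline{\partial}\partial$ through the sum is legitimate. The $(y_1,t_1)$-integral is absolutely convergent for $Re(s)\gg 0$: the Whittaker asymptotic \eqref{eq:Whittaker_asymp_z_0} gives $M_{-k/2,s/2}(cy_1)=O(y_1^{(s+1)/2})$ as $y_1 \to 0$, ensuring integrability at the origin, while \eqref{eq:Whittaker_asymp_z_infty} produces a growth $e^{cy_1/2}$ at infinity that is cancelled by the Gaussian $e^{-2\pi y_1(Q(v_{z^\perp})-Q(v_z))}$ (whose exponent is strictly positive for $z \in U$, since $z \notin \mathbb{D}_v$), leaving the surplus factor $e^{-Re(s)y_1}$. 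The analogous analysis applies in $t_1$ and at the places $i\geq 2$. Combining these uniform-on-compacta bounds with Fubini and dominated convergence, together with \autoref{prop:Current_Sum_Converges_L1} to control the resulting sum of forms on $U$, legitimates every manipulation and establishes the claimed equality on $U$.
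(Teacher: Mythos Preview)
Your overall strategy matches the paper's: unfold the $N$-integral to pick out $\Omega_T(F)$, evaluate the archimedean integral term-by-term via \eqref{eq:Laplace_transform} and \eqref{eq:W_a_integral}, then regroup the resulting sum into $\Phi(T,\varphi,s)_K$. However, two steps need tightening.

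First, the combinatorial regrouping is more delicate than you indicate. You write that $\Omega_T(F)$ is a single $H(F)$-orbit, but the definition of $\Phi(v_0,w_0,\xi_i,s)_K$ is built from $H_+(\mathbb{Q})$-orbits, and here the cases $n>2$ and $n=2$ genuinely differ: for $n>2$ one has a single $H_+(\mathbb{Q})$-orbit, while for $n=2$ there are two (cf.\ \citep[Lemma 5.5]{KudlaOrthogonal}), which is why the paper's definition of $\Phi(v,w,h,s)_K$ splits into \eqref{eq:Phi(v,w,h,s)_K_def_n>2} and \eqref{eq:Phi(v,w,h,s)_K_def_n=2}. To actually match the sum over $\Omega_T(F)$ with $\sum_i \varphi(\xi_i^{-1}(v_0,w_0))\,\Phi(v_0,w_0,\xi_i,s)_K$ on each component $\Gamma_{h_j}\backslash\mathbb{D}^+$, the paper invokes the bijection of \citep[Lemma 5.7]{KudlaOrthogonal} between $\Gamma_{h_j}$-orbits in $H_+(\mathbb{Q})\cdot(v_0,w_0)\cap h_jK\xi_i^{-1}\cdot(v_0,w_0)$ and the double coset appearing in the definition; your ``indexed by the $\xi_i$ and by $\gamma\in\Gamma_{\xi_i,v_0,w_0}\backslash\Gamma_{\xi_i}$'' skips this and does not handle the extra $\gamma_0$-translate in the $n=2$ case.

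Second, your justification of the interchange of $\sum_{(v,w)}$ with $\int_{A(\mathbb{R})^0}$ cites \autoref{prop:normal_convergence} and \autoref{prop:Current_Sum_Converges_L1}, but those control the sum of the \emph{post-integration} forms $\omega(\gamma^{-1}v,\gamma^{-1}w,z,s)$, not the sum of the absolute integrals $\int_{A(\mathbb{R})^0}|\mathcal{M}_T(a,s)|\,\|\omega(a)\varphi^{1,1}_\infty(v,w,z)\|\,da$ required for Fubini. The paper isolates this as a separate convergence lemma (\autoref{lemma:theta_lift_convergence}); the argument is close in spirit to what you sketch (polynomial-times-Gaussian bounds reducing to a lattice sum), but it is not a consequence of the propositions you cite.
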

\begin{proof}
Unfolding the sum defining $\theta(na;\varphi)_K$ and the inner integral in $(\mathcal{M}_T(s),\theta(\cdot,h;\varphi)_K)^{reg}$ leads to
\begin{equation*}
\int_{A(\mathbb{R})^0}\mathcal{M}_T(a,s) \cdot \sum_{(v,w) \in \Omega_T(F)} \varphi(v,w) \cdot \omega(a)\varphi^{1,1}_\infty(v,w,z) da.
\end{equation*}
Let 
\begin{equation*}
\tilde{U} = \mathbb{D}- \cup_{(v,w) \in \Omega_T(F) \cap Supp(\varphi)} (\mathbb{D}_v \cup \mathbb{D}_w),
\end{equation*}
so that $\tilde{U}$ is an open dense subset of $\mathbb{D}$ whose complement has measure zero. By Fubini's theorem and \autoref{lemma:theta_lift_convergence} below, the sum and the integral can be interchanged whenever $z \in \tilde{U}$; thus the above equals
\begin{equation*}
\sum_{(v,w) \in \Omega_T(F)} \varphi(v,w) \cdot \int_{A(\mathbb{R})^0}\mathcal{M}_T(a,s) \omega(a)\varphi^{1,1}_\infty(v,w,z) da.
\end{equation*}
The integral can be computed using equations \eqref{eq:Laplace_transform} and \eqref{eq:W_a_integral}. We obtain:
\begin{equation*}
\int_{A(\mathbb{R})^0}\mathcal{M}_T(a,s) \cdot \omega(a)\varphi^{1,1}_\infty(v,w,z) da=2 \cdot \omega(v,w,z,s).
\end{equation*} 
Assume first that $n>2$. Then $H_+(\mathbb{Q})$ acts transitively on $\Omega_T(F)$ (cf. \citep[Lemma 5.5]{KudlaOrthogonal}); fixing $(v_0,w_0) \in \Omega_T(F)$ we see that for $z \in \tilde{U}$, the integral $(\mathcal{M}_T(s),\theta(\cdot;\varphi)_K)^{reg}$ equals
\begin{equation*}
I(v_0,w_0,\varphi,s):=\sum_{(v,w) \in H_+(\mathbb{Q})\cdot (v_0,w_0)} \varphi(v,w) \cdot \omega(v,w,z,s).
\end{equation*}
With $h_j$, $j=1,\ldots,r$ as in \eqref{eq:connected_components_X_K}, we have
\[
I(v_0,w_0,\varphi,s)|_{\Gamma_{h_j} \backslash \mathbb{D}^+}=\sum_{(v,w) \in H_+(\mathbb{Q}) \cdot (v_0,w_0)} \omega(h_j)\varphi(v,w) \cdot \omega(v,w,z,s).
\]
Let $\xi_i$, $i=1,\ldots,k$ be as in \eqref{eq:supp_varphi_K_decomp} and define
\[
S_{j,i}(v_0,w_0)=H_+(\mathbb{Q}) \cdot (v_0,w_0) \cap h_j K \xi_i^{-1} \cdot (v_0,w_0).
\]
Note that $\omega(h_j)\varphi(v,w)=\varphi(\xi_i^{-1} \cdot (v_0,w_0))$ for every $(v,w) \in S_{j,i}(v_0,w_0)$ and
\begin{equation*}
H_+(\mathbb{Q})\cdot (v_0,w_0) \cap Supp(\omega(h_j)\varphi) =\coprod_{i=1}^k S_{j,i}(v_0,w_0).
\end{equation*}
Hence
\[
I(v_0,w_0,\varphi,s)|_{\Gamma_{h_j} \backslash \mathbb{D}^+}=\sum_{i=1}^k \varphi(\xi_i^{-1} \cdot (v_0,w_0)) \cdot \sum_{(v,w) \in S_{j,i}(v_0,w_0)} \omega(v,w,z,s).
\]
Note that the set $S_{j,i}(v_0,w_0)$ is stable under $\Gamma_{h_j}=H_+(\mathbb{Q}) \cap h_j K h_j^{-1}$, so that we can write
\begin{equation*}
\begin{split}
\sum_{(v,w) \in S_{j,i}(v_0,w_0)} \omega(v,w,z,s) &= \sum_{(v,w) \in \Gamma_{h_j} \backslash S_{j,i}(v_0,w_0)} \sum_{\gamma \in (\Gamma_{h_j})_{v,w} \backslash \Gamma_{h_j}} \omega(\gamma^{-1} v,\gamma^{-1} w,z,s) \\ 
&=\sum_{(v,w) \in \Gamma_{h_j} \backslash S_{j,i}(v_0,w_0)} \Phi(v,w,z,s)_{\Gamma_{h_j}}.
\end{split}
\end{equation*}
By \citep[Lemma 5.7.i)]{KudlaOrthogonal}, the set of orbits $\Gamma_{h_j}\backslash S_{j,i}(v_0,w_0)$ is in bijection with the double coset (where we write $H_U$ for $H_{v_0,w_0}$)
\[
(H_U)_+(\mathbb{Q}) \backslash H_U(\mathbb{A}_f) \cap H_+(\mathbb{Q}) h_j K \xi_i^{-1}/K_{U,\xi_i}.
\]
Moreover, the bijection is as follows: suppose $(v,w) \in S_{j,i}(v_0,w_0)$ is of the form $\gamma \cdot (v_0,w_0)=h_jk \xi^{-1} \cdot (v_0,w_0)$. Then $\Gamma_{h_j} \cdot (v,w)$ corresponds to the double coset $(H_U)_+(\mathbb{Q}) \gamma^{-1}h_jk \xi^{-1} K_{U,\xi_i}$. Thus, by definition of $\Phi(v,w,h,s)_K$ we have
\[
\sum_{(v,w) \in \Gamma_{h_j} \backslash S_{j,i}(v_0,w_0)} \Phi(v,w,z,s)_\Gamma=\Phi(v_0,w_0,\xi_i,s)_K|_{\Gamma_{h_j} \backslash \mathbb{D}^+}
\]
and hence
\begin{equation*}
I(v_0,w_0,\varphi,s)|_{\Gamma_{h_j} \backslash \mathbb{D}^+}=\sum_{i=1}^k \varphi(\xi_i^{-1} \cdot (v_0,w_0)) \cdot \Phi(v_0,w_0,\xi_i,s)_K|_{\Gamma_{h_j} \backslash \mathbb{D}^+}
\end{equation*}
for every $j$, as was to be shown.

Assume now that $n=2$. By \citep[Lemma 5.5]{KudlaOrthogonal}, the group $H_+(\mathbb{Q})$ acts with two orbits on $\Omega_T(F)$, and we have $\Omega_T(F)=H_+(\mathbb{Q})\cdot (v_0,w_0) \coprod H_+(\mathbb{Q}) \gamma_0 \cdot (v_0,w_0)$ for any $\gamma_0 \in H(\mathbb{Q})$ that fixes the plane $U_0$ spanned by $(v_0,w_0)$ but reverses its orientation given by the ordered basis $\{v_0,w_0\}$. Thus, for $z \in \tilde{U}$, the integral $(\mathcal{M}_T(s),\theta(\cdot;\varphi)_K)^{reg}$ equals
\[
I(v_0,w_0,\varphi,s)+I(\gamma_0 \cdot (v_0,w_0),\varphi,s).
\]
Define
\[
S_{j,i}(v_0,w_0,\gamma_0)=H_+(\mathbb{Q})\gamma_0 \cdot (v_0,w_0) \cap h_j K \xi_i^{-1} \cdot (v_0,w_0).
\]
Then $S_{j,i}(v_0,w_0,\gamma_0)$ is stable under $\Gamma_{h_j}$ and one shows as above that
\[
I(\gamma_0 \cdot (v_0,w_0),\varphi,s)|_{\Gamma_{h_j}\backslash \mathbb{D}^+}= \sum_{i=1}^k \varphi(\xi_i^{-1} \cdot (v_0,w_0)) \sum_{(v,w) \in \Gamma_{h_j} \backslash S_{j,i}(v_0,w_0,\gamma_0)} \Phi(v,w,z,s)_{\Gamma_{h_j}}.
\]
Note that we can choose $\gamma_0$ so that $\gamma_0 \cdot v_0=v_0$ and $\gamma_0 \cdot w_0=-w_0$. Since $\omega(v,w,z,s)=\omega(v,-w,z,s)$, we conclude from \citep[Lemma 5.7.ii)]{KudlaOrthogonal} that
\begin{equation*}
\begin{split}
\Phi(v_0,w_0,\xi_i,s)_K|_{\Gamma_{h_j} \backslash \mathbb{D}^+}&= \sum_{(v,w) \in \Gamma_{h_j} \backslash S_{j,i}(v_0,w_0)} \Phi(v,w,z,s)_{\Gamma_{h_j}} \\
&\quad +\sum_{(v,w) \in \Gamma_{h_j} \backslash S_{j,i}(v_0,w_0,\gamma_0)} \Phi(v,w,z,s)_{\Gamma_{h_j}}
\end{split}
\end{equation*}
and the claim follows from this.
\end{proof}

The following lemma completes the proof of \autoref{proposition:theta_lift_convergence}.

\begin{lemma} \label{lemma:theta_lift_convergence}
Let $T \in Sym_2(F)$ be totally positive definite, $\varphi$ be a Schwartz form in $\mathcal{S}(V(\mathbb{A}_f)^2)$ and let
\begin{equation*}
\tilde{U} = \mathbb{D}- \cup_{(v,w) \in \Omega_T(F) \cap Supp(\varphi)} (\mathbb{D}_v \cup \mathbb{D}_w).
\end{equation*}
Then, for $Re(s) \gg 0$, the sum
\begin{equation} \label{eq:some_sum}
\sum_{(v,w) \in \Omega_T(F)} |\varphi(v,w)| \cdot \int_{A(\mathbb{R})^0}|\mathcal{M}_T(a,s)| \cdot ||\omega(a)\varphi^{1,1}_\infty(v,w,z)|| da
\end{equation}
converges for every $z \in \tilde{U}$.
\end{lemma}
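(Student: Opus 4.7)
The plan is to bound the summand by a multiple of $|\omega(v,w,z,\mathrm{Re}(s))|$ (essentially the quantity that appears in Proposition \ref{prop:normal_convergence}) and then to reduce the resulting sum to the convergent lattice sums that were shown to converge in the proof of Proposition \ref{prop:normal_convergence}.

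First, I would use the factorization of both $\mathcal{M}_T$ and $\varphi_\infty^{1,1}$ as products over archimedean places. After applying $\omega(a)$ in the Weil representation, the explicit formula \eqref{eq:M_T} writes $|\mathcal{M}_T(a,s)| \cdot ||\omega(a)\varphi_\infty^{1,1}(v,w,z)||$ as a product of two pieces. At each positive definite place $i=2,\ldots,d$, the factor is
\[
|W_{a_i}(y_i)|\cdot |W_{c_i}(t_i)|\cdot y_i^{\kappa/2}t_i^{\kappa/2} e^{-2\pi(y_i Q(v_i)+t_i Q(w_i))};
\]
since $(v,w)\in\Omega_T(F)$ forces $Q(v_i)=a_i$ and $Q(w_i)=c_i$, the integral in $(y_i,t_i)$ is a constant independent of $(v,w)$ by \eqref{eq:W_a_integral}. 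At the first archimedean place, the factor involves $|M_{\sigma_1(T)}(y_1,s)|\cdot |M_{\sigma_1(T)^\iota}(t_1,s)|\cdot (y_1t_1)^{1-\kappa/2}$ multiplied by the norm $||\omega(a_1)\varphi^{1,1}(v_1,w_1,z)||$.

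The main estimate is at the first place. Using the asymptotic behavior \eqref{eq:Whittaker_asymp_z_0}--\eqref{eq:Whittaker_asymp_z_infty} of $M_{\nu,\mu}$, I would bound $|M_{\sigma_1(T)}(y_1,s)|$ by a constant times the integrand appearing in the Laplace transform representation \eqref{eq:Laplace_transform} with $s$ replaced by $\mathrm{Re}(s)$ (and similarly for $t_1$). From \eqref{eq:def_phi_11} together with the formula $\omega(a_1)\varphi^0(v_1,z)$, which is a polynomial in the norms of $(v_1,w_1)$ projected onto $z$ and $z^\perp$ times the Gaussian $e^{-2\pi(y_1 Q(v_{1,z^\perp})+y_1^{-1}\ldots)}$ (and analogously in $t_1$), the norm $||\omega(a_1)\varphi^{1,1}(v_1,w_1,z)||$ is bounded by a polynomial in $(y_1,t_1)$ times $e^{-2\pi(y_1(Q(v_{1,z^\perp})-Q(v_{1,z}))+t_1(Q(w_{1,z^\perp})-Q(w_{1,z})))}$. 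Performing the Laplace transform \eqref{eq:Laplace_transform} at $\mathrm{Re}(s)$ (enlarging $\mathrm{Re}(s)$ to absorb the polynomial factors) yields a bound of the form
\[
C\cdot |\phi(v_1,w_1,z,\mathrm{Re}(s)-\epsilon)|\cdot |\phi(w_1,v_1,z,\mathrm{Re}(s)-\epsilon)|
\]
(with $C$ depending on $T,\varphi,z$ but not on $(v,w)$).

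Finally I would sum over $(v,w)$. Because $\varphi\in\mathcal{S}(V(\mathbb{A}_f)^2)$ is compactly supported and fixed by an open compact $K$, there is a lattice $L\subset V(F)$ such that every $(v,w)$ in $\Omega_T(F)\cap\mathrm{Supp}(\varphi)$ lies in $L^2$; moreover, at the real places $i\geq 2$, the constraint $Q(v_i)=a_i, Q(w_i)=c_i$ already confines $(v_i,w_i)$ to a compact set (the product of two spheres in the positive definite space $V_{\sigma_i}$). Thus the sum reduces to one over $(v_1,w_1)$ in a lattice in $V_1^2$ with $Q(v_1)=a_1$, $Q(w_1)=c_1$, of the product $|\phi(v_1,w_1,z,\mathrm{Re}(s)-\epsilon)|\cdot |\phi(w_1,v_1,z,\mathrm{Re}(s)-\epsilon)|$. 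By the bound on $\phi$ in Lemma \ref{lem:phi_vwzs} and the argument at the end of the proof of Proposition \ref{prop:normal_convergence}, this sum is dominated by the product of two theta-type majorants associated with the positive definite quadratic form $v\mapsto Q(v_{z^\perp})-Q(v_z)$ on $V_1$, which converges normally for $z\in\tilde U$ once $\mathrm{Re}(s)$ is sufficiently large.

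The main obstacle is step two, controlling the archimedean integral after taking absolute values: the Whittaker function $M_{\nu,\mu}$ is not of constant sign, so the cancellation that produces $\phi(v,w,z,s)$ exactly in \eqref{eq:Laplace_transform} is lost and one must check that the same Laplace integral with $s$ replaced by $\mathrm{Re}(s)$ still gives a bound of the correct size in $(v_1,w_1)$, which is exactly what the asymptotics \eqref{eq:Whittaker_asymp_z_0}--\eqref{eq:Whittaker_asymp_z_infty} (combined with a slight loss $\epsilon$ in $\mathrm{Re}(s)$ to absorb polynomial factors from the derivatives in \eqref{eq:def_phi_11}) allow one to do.
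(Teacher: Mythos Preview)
Your proposal is correct and follows essentially the same strategy as the paper: bound the archimedean integral for each $(v,w)$ by a quantity of the form $C\cdot Q(v_{1,z^\perp})^{-\sigma}\,Q(w_{1,z^\perp})^{-\sigma}$ and then invoke the lattice-sum argument from the proof of Proposition~\ref{prop:normal_convergence}.

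The only organizational difference is that the paper first reduces the sum over $\Omega_T(F)\cap\mathrm{Supp}(\varphi)$ to finitely many $\Gamma$-orbit sums, and then bounds the integral directly by writing $\|\omega(a)\varphi^{1,1}_\infty\|$ as a polynomial in $Q(v_{z^\perp})$ times the Gaussian, obtaining the estimate $C_\epsilon\cdot |Q((\gamma^{-1}v)_{z^\perp})\,Q((\gamma^{-1}w)_{z^\perp})|^{-\frac{s+s_0}{2}+k}$ without passing through the intermediate function $\phi$. Your detour via $|\phi(v_1,w_1,z,\mathrm{Re}(s)-\epsilon)|$ is slightly more circuitous (and, as you correctly flag, requires the Whittaker asymptotics to handle the loss of sign in the Laplace transform), but it lands at the same majorant and the same conclusion.
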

\begin{proof}
Let $(v,w) \in \Omega_T(F)$. It is enough to show that, for $Re(s) \gg 0$ and any $\Gamma \subset H_+(\mathbb{R})$, the sum
\begin{equation*}
\sum_{\gamma \in \Gamma_{v,w}\backslash \Gamma} \int_{A(\mathbb{R})^0}|\mathcal{M}_T(a,s)| \cdot ||\omega(a)\varphi^{1,1}_\infty(\gamma^{-1}v,\gamma^{-1}w,z)|| da
\end{equation*}
converges for $z \in \mathbb{D}^+-(\Gamma \cdot \mathbb{D}^+_v \cup \Gamma \cdot \mathbb{D}^+_{w})$, since \eqref{eq:some_sum} is a finite linear combination of sums of this form.
Note that if $\omega(v,z)$ is any of the forms $\partial \varphi^0(v,z)$, $\overline{\partial} \varphi^0(v,z)$ or $\partial \overline{\partial}\varphi^0(v,z)$, then we can write
\begin{equation*}
|| \omega(v,z)||=\sum_i ||P_i(v,z)|| \cdot \varphi^0(v,z)
\end{equation*}
where the sum over $i$ is finite and the functions $P_i(v,z)$ are polynomial functions of $v$ for fixed $z$ satisfying $||P_i(hv,hz)||=||P_i(v,z)||$ for every $h \in H(\mathbb{R})$. In particular, there exists a positive constant $C$ and a natural number $k$ (in fact, $k=2$ will do) such that $||P_i(v,z)|| \leq C \cdot Q(v_{z^\perp})^k$ for every $z \in \mathbb{D}^+$ and every $v$ of fixed positive norm $Q(v)=m>0$. Now choose $\epsilon>0$ such that $|Q(\gamma^{-1} v)_z|> \epsilon$ and $|Q(\gamma^{-1} w)_z| > \epsilon$ for all $\gamma \in \Gamma$. Then there exists a constant $C_\epsilon>0$ such that
\begin{equation*}
\int_{A(\mathbb{R})^0}|\mathcal{M}_T(a,s)| \cdot ||\omega(a)\varphi^{1,1}_\infty(\gamma^{-1}v,\gamma^{-1}w,z)|| da < C_\epsilon \cdot |Q(\gamma^{-1} v)_{z^\perp} \cdot Q(\gamma^{-1} w)_{z^\perp}|^{-\frac{s+s_0}{2}+k},
\end{equation*}
and hence the claim the follows as in the proof of \autoref{prop:normal_convergence}.
\end{proof}

\hyperref[thm:Theorem1_currents]{Theorem \ref*{thm:Theorem1_currents}} now follows from \autoref{prop:cohomologous_currents} and \autoref{proposition:theta_lift_convergence}.

\subsection{Higher Chow groups and regulators} \label{subsection:higher_chow_groups} We next focus on the relationship between the currents $\Phi(T,\varphi)$ introduced above and the currents in the image of the regulator map
\begin{equation}
r_{\mathcal{D}}: CH^2(X_K,1) \rightarrow \mathcal{D}^{1,1}(X_K).
\end{equation}
Let us first recall the definitions of the higher Chow group $CH^2(X_K,1)$ and of the above map.

Let $Y$ be an irreducible algebraic variety defined over a field $k$. The group $CH^2(Y,1)$ is defined as a quotient
\begin{equation}
CH^2(Y,1)=Z^2(Y,1)/B^2(Y,1).
\end{equation}
An element $c \in Z^2(Y,1)$ is a finite linear combination
\begin{equation}
c=\sum_{i}a_i \cdot (\pi_i:Z_i \rightarrow Y,f_i),
\end{equation}
where $Z_i$ is a normal variety over $k$ of dimension $\dim(Y)-1$, $\pi_i$ is a generically finite proper map, $f_i$ is a meromorphic function on $Z_i$, and $a_i \in \mathbb{Q}$; it is also required that
\begin{equation} \label{eq:Higher_Chow_d0}
\sum_i a_i \cdot (\pi_i)_*(div(f_i))=0
\end{equation}
as a cycle of codimension $2$ in $Y$. For a description of $B^2(Y,1)$, see \citep{VoisinHigherChow}.

Suppose that $k \subseteq \mathbb{C}$. Define a map
\begin{equation}
\begin{split}
r_{\mathcal{D}}: CH^2(Y,1) &\rightarrow \mathcal{D}^{1,1}(Y_\mathbb{C}) \\
\sum_{i}a_i \cdot (\pi_i:Z_i \rightarrow Y,f_i)  &\mapsto 2\pi i \cdot \sum a_i \cdot (\pi_i)_*([\log|f_i|]),
\end{split}
\end{equation}
where $(\pi_i)_*(\log |f_i|) \in \mathcal{D}^{1,1}(Y_\mathbb{C})$ is the current defined by
\begin{equation}
((\pi_i)_*(\log |f_i|),\alpha)=\int_{Z_i} \pi_i^*(\alpha) \cdot \log|f_i|
\end{equation}
for $\alpha \in \mathcal{A}_c^{2\dim(Y)-2}(Y_\mathbb{C})$. The map $r_{\mathcal{D}}$ is known as a regulator map; it is linear and its image defines a rational vector subspace of $\mathcal{D}^{1,1}(Y_\mathbb{C})$. Note also that for any $c \in CH^2(Y,1)$, the current $r_{\mathcal{D}}(c)$ is $dd^c$-closed: this follows from the identity of currents
\begin{equation}
dd^c (\pi_i)_*(\log|f_i|^2)=\delta_{div(f_i)}
\end{equation} 
and condition \eqref{eq:Higher_Chow_d0}.

Note that the currents $[\Phi(T,\varphi)]$ in \eqref{eq:Limit_currents} are not $d d^c$-closed. In fact, for the currents $[\Phi(v,w)_\Gamma]$ in \eqref{def:Phi(v,w)_Gamma_current}, we have
\begin{equation} \label{eq:ddbar_Phi(v,w)_Gamma}
dd^c[\Phi(v,w)_\Gamma]=\delta_{Z(v,w)_\Gamma} + dd^c G(v,w)_\Gamma \cdot \delta_{X(v)_\Gamma}.
\end{equation}
Here $dd^c G(v,w)_\Gamma$ extends to a smooth $2$-form defined on $X(v)_\Gamma$, and the current $dd^c G(v,w)_\Gamma \cdot \delta_{X(v)_\Gamma} \in \mathcal{D}^{1,1}(X_\Gamma)$ is defined by
\begin{equation*}
(dd^c G(v,w)_\Gamma \cdot \delta_{X(v)_\Gamma},\alpha)=\int_{X(v)_\Gamma} dd^c G(v,w)_\Gamma \wedge \alpha
\end{equation*}
for $\alpha \in \mathcal{A}_c^{n-1,n-1}(X_\Gamma)$.

Since $\Phi(T,\varphi)$ is not $dd^c$-closed, it is not in the image of the regulator map defined above. It is natural to ask for necessary and sufficient conditions for a finite linear combination $\sum_{T,\varphi}a(T,\varphi) [\Phi(T,\varphi)]$ with $a(T,\varphi) \in \mathbb{Q}$ to belong to the image of the regulator. The next proposition proves a weak result in this direction when $n \geq 4$. It turns out that in this case being $dd^c$-closed is also sufficient.

\begin{proposition} \label{prop:higher_chow_currents}
Assume that $n \geq 4$. Let $\Phi_K=\sum_{T,\varphi}a(T,\varphi) [\Phi(T,\varphi)_K] \in \mathcal{D}^{1,1}(X_K)$, where the sum is finite and $a(T,\varphi) \in \mathbb{Q}$. Then $dd^c \Phi_K=0$ if and only if $\Phi_K=r_{\mathcal{D}}(c)$ for some $c \in CH^2(X_K,1)$.
\end{proposition}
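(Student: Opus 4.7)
The equivalence will be established in two directions, of which only the "only if" direction is substantial.

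The "if" direction is an immediate consequence of the Poincar\'e--Lelong formula. Given $c = \sum_i a_i(\pi_i : Z_i \to X_K, f_i) \in Z^2(X_K,1)$, the formula gives $dd^c[\log|f_i|] = \delta_{\mathrm{div}(f_i)}$ on $Z_i$, so that commuting $dd^c$ with the proper pushforward yields
\[
dd^c r_{\mathcal{D}}(c) = 2\pi i \sum_i a_i (\pi_i)_*(\delta_{\mathrm{div}(f_i)}),
\]
which vanishes by the cycle condition \eqref{eq:Higher_Chow_d0}.

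For the "only if" direction, the plan is to regroup $\Phi_K$ by the codimension-one stratum supporting each Green current and then exploit a splitting of the equation $dd^c\Phi_K = 0$ into independent pieces of different support dimensions. Unfolding \eqref{eq:Phi(T,varphi)_K_def} and the definition of $[\Phi(v,w,\xi_i)_K]$ on connected components, one obtains a decomposition
\[
\Phi_K = 2\pi i \sum_\ell (f_\ell)_*[G_\ell],
\]
where $f_\ell : Y_\ell \hookrightarrow X_K$ runs over finitely many codimension-one special subvarieties $Y_\ell = f_\ell(X(v_\ell)_{\Gamma_{h_\ell}})$, and each $G_\ell$ is a $\mathbb{Q}$-linear combination of Green functions of the shape $G(v_\ell, w, \cdot)_{\Gamma_{h_\ell}}$ as in \eqref{eq:def_G(v,w,z)_Gamma}. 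Using \eqref{eq:ddbar_Phi(v,w)_Gamma} termwise and pushing forward, I compute
\[
dd^c \Phi_K = 2\pi i \sum_\ell (f_\ell)_*\bigl(\delta_{\mathrm{div}(G_\ell)}\bigr) + 2\pi i \sum_\ell (f_\ell)_*\bigl([dd^c G_\ell]\cdot \delta_{Y_\ell}\bigr).
\]
The first sum is a current supported on codimension-two subvarieties; the second is supported on the codimension-one subvarieties $Y_\ell$ with smooth densities (smoothness being Bruinier's extension result recalled after \eqref{def:Green_current_G(v)_Gamma}). Since these two types of currents have distinct support dimensions, the hypothesis $dd^c \Phi_K = 0$ forces each to vanish separately: (a) $dd^c G_\ell = 0$ on $Y_\ell$ as a smooth $(1,1)$-form for each $\ell$, and (b) $\sum_\ell (f_\ell)_*(\mathrm{div}(G_\ell)) = 0$ as a codimension-two cycle on $X_K$.

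Condition (b) is exactly the cycle condition \eqref{eq:Higher_Chow_d0} needed to produce an element of $Z^2(X_K,1)$, once we have meromorphic functions to attach to each $Y_\ell$. Condition (a) is the heart of the argument, and the main obstacle: it requires converting the \emph{analytic} vanishing $dd^c G_\ell = 0$ into the \emph{algebraic} existence of a meromorphic function $\Psi_\ell \in \mathbb{C}(Y_\ell)^\times \otimes_\mathbb{Z}\mathbb{Q}$ with $G_\ell = \log|\Psi_\ell|^2$. The plan is to invoke Bruinier's automorphic product construction (\citep[Theorem 6.8]{Bruinier}) on $Y_\ell$, which is itself a $GSpin$ Shimura variety of complex dimension $n-1 \geq 3$ (this is where the hypothesis $n\geq 4$ enters, ensuring the relevant surjectivity in Bruinier's theorem). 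The input is that $G_\ell$ lies in the $\mathbb{Q}$-span of Bruinier Green functions and has vanishing smooth $dd^c$-part; Bruinier's theorem then produces $\Psi_\ell$ with $\log|\Psi_\ell|^2$ matching the singularities of $G_\ell$ along $\mathrm{div}(G_\ell)$. The difference $G_\ell - \log|\Psi_\ell|^2$ is then smooth and $dd^c$-closed on the compact $Y_\ell$, hence constant, and can be absorbed into a rescaling of $\Psi_\ell$.

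Having produced the $\Psi_\ell$, setting $c = \sum_\ell (f_\ell : Y_\ell \to X_K, \Psi_\ell^2)$ gives an element of $Z^2(X_K,1)$ by (b), and by construction
\[
r_{\mathcal{D}}(c) = 2\pi i \sum_\ell (f_\ell)_*\bigl([\log|\Psi_\ell^2|]\bigr) = 2\pi i \sum_\ell (f_\ell)_*[G_\ell] = \Phi_K,
\]
completing the proof. The entire argument hinges on step (a), which packages the nontrivial content of Bruinier's theory on the smaller Shimura variety $Y_\ell$.
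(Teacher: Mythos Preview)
Your overall architecture matches the paper's proof exactly: regroup $\Phi_K$ along connected special divisors $X(v)_\Gamma$, split the equation $dd^c\Phi_K=0$ into a codimension-two piece (the cycle condition) and a codimension-one piece (forcing the smooth form $dd^c G_\ell$ to vanish on each $Y_\ell$), and then assemble an element of $Z^2(X_K,1)$.

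The one substantive divergence is in how you pass from ``$dd^c G_\ell=0$ smoothly'' to ``$G_\ell=\log|\Psi_\ell|$ for a meromorphic function $\Psi_\ell$''. You appeal to Bruinier's Borcherds-product construction \citep[Theorem 6.8]{Bruinier}. The paper instead argues directly: since $\dim X(v)_\Gamma=n-1\geq 3$, the vanishing theorem of \citet[Theorem 8.1]{VoganZuckerman} gives $H^1(X(v)_\Gamma,\mathbb{C})=0$, hence $\mathrm{Pic}^0(X(v)_\Gamma)=0$; the equation $dd^c[G_\ell]=\delta_{D_\ell}$ shows $[D_\ell]=0$ in $H^2_\mathbb{Q}$, so $D_\ell$ is principal in $\mathrm{Pic}\otimes\mathbb{Q}$, and $G_\ell-a\log|f|$ is smooth pluriharmonic on a compact K\"ahler manifold, hence constant.

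Your route via Bruinier is not wrong, but it is heavier and the justification you give is imprecise. Borcherds/Bruinier lifts a priori produce meromorphic \emph{modular forms} of some weight $c_f(0,0)$; to land on a meromorphic \emph{function} you need the weight to be zero, which amounts to showing that your divisor $D_\ell$ is principal in $\mathrm{Pic}\otimes\mathbb{Q}$, not merely cohomologically trivial. That step again comes down to $\mathrm{Pic}^0=0$, i.e.\ to $H^1=0$. So the role of $n\geq 4$ is not really ``surjectivity in Bruinier's theorem'' but the Vogan--Zuckerman vanishing of $H^1$ on the $(n-1)$-dimensional special divisor; once you isolate that, the Borcherds machinery becomes unnecessary for this argument.
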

\begin{proof} Above we showed that $r_{\mathcal{D}}(c)$ is $dd^c$-closed for any $c \in CH^2(X_K,1)$. Now let $\Phi_K=\sum_{T,\varphi}a(T,\varphi) [\Phi(T,\varphi)_K]$ as in the statement and assume that $dd^c \Phi_K=0$. We compute
\begin{equation} \label{eq:ddc_Phi_is_0}
0=dd^c \Phi_K = \sum_{T,\varphi} a(T,\varphi) \cdot (\delta_{Z(T,\varphi)_K} + \Psi(T,\varphi)_K),
\end{equation}
where $\Psi(T,\varphi)_K \in \mathcal{D}^{1,1}(X_K)$ is a current whose support is a finite union of special divisors on $X_K$. More precisely, we have
$$
\Psi(T,\varphi)_K=\sum_i dd^c G_{i} \cdot \delta_{X(v_i,h_i)_K},
$$
where the sum is finite and $G_{i}$ is a finite linear combination of Green functions of the form \eqref{def:Green_current_G(v)_Gamma} on $X(v_i,h_i)_K$ with logarithmic singularities on special divisors. Since the cycles $Z(T,\varphi)_K$ are of codimension $2$, it follows from \eqref{eq:ddc_Phi_is_0} that
\begin{equation} \label{eq:ddc_Phi_0_part1}
\sum_{T,\varphi} a(T,\varphi) \cdot \delta_{Z(T,\varphi)_K}=0,
\end{equation}
\begin{equation} \label{eq:ddc_Phi_0_part2}
\sum_{T,\varphi} a(T,\varphi) \cdot \Psi(T,\varphi)_K=0.
\end{equation}
Now write
\[
\sum_{T,\varphi}a(T,\varphi)[\Phi(T,\varphi)_K]=\sum_{i}G_i \delta_{X(v_i,h_i)_K},
\]
where the sum over $i$ is finite and $G_i$ is a Green function on $X(v_i,h_i)_K$. Equation \eqref{eq:ddc_Phi_0_part2} implies that the summand corresponding to a connected special divisor $X(v)_\Gamma$ in this sum is of the form $G(v,\Gamma) \cdot \delta_{X(v)_\Gamma}$, where $G(v,\Gamma)$ is a Green function on $X(v)_\Gamma$ that satisfies $dd^c G(v,\Gamma)=0$. Since $n \geq 4$, we have $H^1(X(v)_\Gamma,\mathbb{C})=0$ (see \citep[Theorem 8.1]{VoganZuckerman}) and it follows that $G(v,\Gamma)=a(v,\Gamma) \cdot \log|f_{v,\Gamma}|$ for some meromorphic function $f_{v,\Gamma} \in k(X(v)_\Gamma)^\times$ and some $a(v,\Gamma) \in \mathbb{Q}$. Thus, denoting by $\pi_{v,\Gamma}$ the map $X(v)_\Gamma \rightarrow X_K$, we find that $\Phi_K=\sum_{v,\Gamma}a(v,\Gamma) \cdot (\pi_{v,\Gamma})_*(\log|f_{v,\Gamma}|)$, where the sum is finite. Consider now the formal sum $\sum a(v,\Gamma) \cdot (\pi_{v,\Gamma}:X(v)_\Gamma \rightarrow X_K,f_{v,\Gamma})$. By \eqref{eq:ddc_Phi_0_part1}, we have $\sum_{v,\Gamma} a(v,\Gamma) \cdot (\pi_{v,\Gamma})_*(div(f_{v,\Gamma}))=0$ and hence it defines an element $c \in CH^2(X_K,1)$ satisfying $r_{\mathcal{D}}(c)=\Phi_K$.
\end{proof}

\subsection{Evaluating currents on differential forms} \label{subsection:interchange_integrals} 
Let $\alpha \in \mathcal{A}_c^{n-1,n-1}(X_K)$ be a compactly supported form. Since \autoref{proposition:theta_lift_convergence} shows that the forms $\Phi(T,\varphi,s)_K$ are theta lifts, one can try to evaluate
\[
[\Phi(T,\varphi,s)_K](\alpha)=\int_{X_K} \Phi(T,\varphi,s)_K \wedge \alpha
\]
by interchanging the integrals. However, this interchange is not justified since the resulting integrals are not absolutely convergent. In this section, we will introduce certain currents $[\tilde{\Phi}(T,\varphi,s)]$ closely related to the $[\Phi(T,\varphi,s)]$. These currents will be meromorphic in $s \in \mathbb{C}$ (modulo $im(\partial)+im(\overline{\partial})$ as before) and we will show that their constant term at $s=s_0$ is a certain $\mathbb{Q}$-linear combination of the $[\Phi(T,\varphi)]$. Moreover, following ideas in \citep{BruinierFunke}, we will give an expression of these currents as regularized theta lifts that allows to evaluate them by interchanging the integrals (see \autoref{prop:interchange_integrals}).

For a pair of vectors $(v,w) \in V(F)^2$ spanning a totally positive definite plane, consider the $(1,1)$-form
\begin{equation}
\tilde{\omega}(v,w,z,s)=\phi(v,w,z,s) \partial \overline{\partial}\phi(w,v,z,s)
\end{equation}
in $\mathcal{A}^{1,1}(\mathbb{D}-(\mathbb{D}_v \cup \mathbb{D}_w))$. The form $\tilde{\omega}(v,w,z,s)$ is related to the form $\omega(v,w,z,s)$ as follows:
\begin{equation} \label{eq:two_forms_on_D_relation}
\begin{split}
\omega(v,w,z,s)+\overline{\omega(w,v,z,s)} & =\overline{\partial}\phi(w,v,z,s) \wedge \partial \phi(v,w,z,s)+\phi(w,v,z,s) \overline{\partial} \partial \phi(v,w,z,s) \\
& \quad + \partial \phi(v,w,z,s) \wedge \overline{\partial}\phi(w,v,z,s) +\phi(v,w,z,s) \partial \overline{\partial} \phi(w,v,z,s) \\
& = \tilde{\omega}(v,w,z,s)-\tilde{\omega}(w,v,z,s).
\end{split}
\end{equation}

For $\Gamma \subset H_+(\mathbb{R})$, define a $(1,1)$-form on $X_\Gamma$ by
\begin{equation}
\tilde{\Phi}(v,w,z,s)_\Gamma=\sum_{\gamma \in \Gamma_{v,w} \backslash \Gamma}\tilde{\omega}(\gamma^{-1}v,\gamma^{-1}w,z,s).
\end{equation}
The proofs of \autoref{prop:normal_convergence} and \autoref{prop:Current_Sum_Converges_L1} apply to this sum and show that it converges normally on $X_\Gamma - (X(v)_\Gamma \cup X(w)_\Gamma)$ and defines a locally integrable $(1,1)$-form on $X_\Gamma$. We define forms $\tilde{\Phi}(v,w,h,s)_K$ and $\tilde{\Phi}(T,\varphi,s)_K$ as in \autoref{subsection:currents_X_K} and \autoref{subsection:weighted_currents} by replacing $\omega(v,w,z,s)$ with $\tilde{\omega}(v,w,z,s)$ throughout. As before, denote by $[\tilde{\Phi}(T,\varphi,s)_K]$ the current in $\mathcal{D}^{1,1}(X_K)$ corresponding to the form $\tilde{\Phi}(T,\varphi,s)_K$. The proof of \hyperref[prop:weighted_currents_pullback_equivariance]{Proposition \ref*{prop:weighted_currents_pullback_equivariance}} shows that the currents $[\tilde{\Phi}(T,\varphi,s)_K]$ for varying $K$ form a compatible system under the maps induced by inclusions $K' \subset K$, so that we obtain a current $[\tilde{\Phi}(T,\varphi,s)] \in \mathcal{D}^{1,1}(X)=\varprojlim_K \mathcal{D}^{1,1}(X_K)$.

Let us now describe the relation of the currents $[\tilde{\Phi}(T,\varphi,s)]$ with the currents $[\Phi(T,\varphi)]$. For $T=\left(\begin{smallmatrix} a & b \\ b & c \end{smallmatrix}\right) \in Sym_2(F)_{>0}$ and $\varphi \in \mathcal{S}(V(\mathbb{A}_f)^2)$, define
\begin{equation} \label{eq:def_iota_involution}
\begin{split}
T^\iota=\left(\begin{smallmatrix} c & b \\ b & a \end{smallmatrix}\right), & \quad \varphi^\iota(v,w)=\varphi(w,v).
\end{split}
\end{equation}

Then it follows from \autoref{prop:cohomologous_currents} and \eqref{eq:two_forms_on_D_relation} that the image of the current $[\tilde{\Phi}(T,\varphi,s)]-[\tilde{\Phi}(T^\iota,\varphi^\iota,s)]$ in $\tilde{\mathcal{D}}^{1,1}(X)$ admits meromorphic continuation to $s \in \mathbb{C}$ and that its constant term at $s=s_0=(n-1)/2$ is given by
\begin{equation} \label{eq:cohomologous_currents_2}
CT_{s=s_0} [\tilde{\Phi}(T,\varphi,s)]-[\tilde{\Phi}(T^\iota,\varphi^\iota,s)] \equiv [\Phi(T,\varphi)]-[\Phi(T^\iota,\varphi^\iota)],
\end{equation}
where $\equiv$ denotes equality of currents modulo $\partial + \overline{\partial}$. See \ref{subsubsection:example_iota_involution} for an example of a current of this form.

\begin{remark} \label{remark:why_differences}
From the point of view of regulator maps $r_{\mathcal{D},K}: CH^{2}(X_K,1) \rightarrow \mathcal{D}^{1,1}(X_K)$, the currents on the right hand side of this equality are quite natural objects. Namely, let $[\Phi] \in \mathcal{D}^{1,1}(X_\Gamma)$ be any current of the form $[\Phi]=r_{\mathcal{D}}(c)$ with $c=\sum n_i (C_i,f_i) \in CH^{2}(X_\Gamma,1)$ (see \autoref{subsection:higher_chow_groups} for definitions) such that the $C_i$ are special divisors and the $f_i \in k(C_i)^\times \otimes \mathbb{Q}$ are (pushforwards of) the meromorphic functions constructed by \citet[Thm. 6.8]{Bruinier}. Then condition \eqref{eq:Higher_Chow_d0} implies that $[\Phi]$ is a linear combination with $\mathbb{Q}$-coefficients of currents $[\Phi(v,w)_\Gamma]-[\Phi(w,v)_\Gamma]$ for some pairs $(v,w) \in V(F)^2$. The current $[\Phi(T,\varphi)]-[\Phi(T^\iota,\varphi^\iota)]$ is just a finite sum of such currents, weighted by the values of $\varphi$.
\end{remark}

Our next goal is to obtain an expression of $\tilde{\Phi}(T,\varphi,s)_K$ as a regularized theta lift with good convergence properties. To do so, we will use a relation between $\partial \overline{\partial} \varphi^0(v,z)$ and $\varphi_{KM}(v,z)$ established by Bruinier and Funke.

Denote by
\begin{equation} \label{eq:def_phi_KM_1}
\varphi_{KM} \in [\mathcal{S}(V_1) \otimes \mathcal{A}^{1,1}(\mathbb{D})]^{H(\mathbb{R})}
\end{equation}
the $\mathcal{S}(V_1)$-valued, closed $(1,1)$-form constructed by Kudla and Millson in [\citeyear{KudlaMillson1}]. We have
\begin{equation} \label{eq:def_phi_KM_2}
\varphi_{KM}(v,z)=P(v,z)\varphi^0(v,z),
\end{equation}
where $P(v,z) \in [\mathcal{C}^\infty(V_1) \otimes \mathcal{A}^{1,1}(\mathbb{D})]^{H(\mathbb{R})}$ is, for fixed $z$, a polynomial in $v$ of degree $2$ (see \citep[(7.16)]{KudlaOrthogonal} for an explicit description of $P(v,z)$; our $\varphi_{KM}$ is denoted $\varphi^{(1)}$ there).

Let $\tau=x+iy$ be an element of the upper half plane and let $g_\tau= \left( \begin{smallmatrix} y^{1/2} & xy^{-1/2} \\ 0 & y^{-1/2} \end{smallmatrix} \right) \in SL_2(\mathbb{R})$. Define
\begin{equation}
\varphi^0(v,\tau,z)=y^{-\frac{n-2}{4}} \cdot \omega(g_{\tau})\varphi^0(v,z)=y \cdot e(Q(v_{z^\perp})\tau + Q(v_z)\overline{\tau}),
\end{equation}
\begin{equation}
\varphi_{KM}(v,\tau,z)=y^{-\frac{n+2}{4}} \omega(g_{\tau})\varphi_{KM}(v,z).
\end{equation}
Here $\omega$ denotes the Weil representation of $SL_2(\mathbb{R})$ on $\mathcal{S}(V_1)$ and $e(x)=e^{2\pi i x}$. Our presentation of $[\tilde{\Phi}(T,\varphi,s)_K]$ as a regularized theta lift will use the following result.

\begin{proposition}\citep[Thm. 4.4]{BruinierFunke}
Let $L=-2i Im(\tau)^2 \frac{\partial}{\partial \overline{\tau}}$ be the Maass lowering operator. Then
\begin{equation} \label{eq:ddc_BrFu}
d d^c \varphi^0(v,\tau,z)=-L \varphi_{KM}(v,\tau,z)
\end{equation}
where $d$ and $d^c=\frac{1}{4\pi i}(\partial-\overline{\partial})$ are the usual differential operators on $\mathbb{D}$.
\end{proposition}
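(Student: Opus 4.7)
The plan is to verify the identity by direct computation, reducing both sides to a common explicit form. Both sides are real-analytic in $(\tau,z)$ and transform $H(\mathbb{R})$-equivariantly, so the identity is pointwise in $z$ and it suffices to verify it for a general $v \in V_1$. Write $R(v,z) := Q(v_{z^\perp}) - Q(v_z)$ for the majorant at $z$; then $Q(v) - R(v,z) = 2Q(v_z)$ and $\varphi^0(v,\tau,z) = y\,e^{2\pi i x Q(v) - 2\pi y R(v,z)}$.

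First I would compute the left-hand side. Since the prefactor $y\,e^{2\pi i x Q(v)}$ is independent of $z$, applying $dd^c_z = \tfrac{i}{2\pi}\partial_z\bar\partial_z$ together with the identity $dd^c e^f = (df\wedge d^c f + dd^c f)e^f$ to $f = -2\pi y\, R(v,z)$ gives
\[
dd^c_z\,\varphi^0(v,\tau,z) = \varphi^0(v,\tau,z)\cdot\bigl(4\pi^2 y^2\, d_z R\wedge d^c_z R - 2\pi y\, dd^c_z R\bigr).
\]
Next, for the right-hand side, unwinding the Weil representation shows that $\varphi_{KM}(v,\tau,z) = P(v,z)\,\varphi^0(v,\tau,z)$, where $P$ is the Kudla-Millson polynomial of \eqref{eq:def_phi_KM_2}; here the weight factors $y^{\pm(n+2)/4}$ cancel thanks to $P$ being homogeneous of degree $2$ in $v$ and $\varphi^0(v,\tau,z)$ carrying the extra factor of $y$. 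A direct computation with $\partial_{\bar\tau} = \tfrac12(\partial_x + i\partial_y)$ and $Q(v) - R(v,z) = 2Q(v_z)$ yields $L\varphi^0(v,\tau,z) = \bigl(y + 4\pi y^2 Q(v_z)\bigr)\varphi^0(v,\tau,z)$, so that
\[
-L\,\varphi_{KM}(v,\tau,z) = -P(v,z)\bigl(y + 4\pi y^2 Q(v_z)\bigr)\varphi^0(v,\tau,z).
\]

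Matching the coefficients of $y$ and of $y^2$ in the two displayed expressions reduces the theorem to the two identities
\[
P(v,z) = 2\pi\, dd^c_z R(v,z), \qquad d_z R(v,z)\wedge d^c_z R(v,z) = -2\, Q(v_z)\, dd^c_z R(v,z),
\]
understood as equalities of $\mathcal{A}^{1,1}(\mathbb{D})$-valued polynomials in $v$ of degrees $2$ and $4$ respectively. To finish, I would choose an orthonormal basis of $V_1$ with $z_0 = \langle e_{n+1}, e_{n+2}\rangle$, introduce Harish-Chandra coordinates centered at $z_0$, and expand $R(v,z)$ explicitly in these coordinates; then $d_z R$, $d^c_z R$, and $dd^c_z R$ all have elementary expressions, and $P(v,z)$ is known in the same trivialization from \citep[(7.16)]{KudlaOrthogonal}. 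The main obstacle is the second identity, a non-trivial algebraic fact about $R$ alone: that $dR\wedge d^c R$ factors, as a polynomial in $v$, through the negative-definite quadratic form $Q(v_z)$. This factorization is the key geometric content that bridges the scalar Siegel Gaussian and the form-valued Kudla-Millson construction.
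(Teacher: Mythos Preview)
The paper does not give its own proof of this proposition; it is simply quoted from \citet[Thm.~4.4]{BruinierFunke}. Your strategy of direct computation in the $\tau$ and $z$ variables, followed by matching powers of $y$, is exactly how Bruinier and Funke prove it, so the overall plan is sound.

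There is, however, a genuine gap. You assert that $\varphi_{KM}(v,\tau,z)=P(v,z)\,\varphi^0(v,\tau,z)$, justifying this by saying that $P$ is homogeneous of degree $2$ in $v$. It is not: the paper only says $P$ has degree $2$, and in fact the Kudla--Millson form has $\varphi_{KM}(0,z)\neq 0$ (it is a nonzero multiple of the invariant K\"ahler form, being the Euler form of the tautological bundle in the Mathai--Quillen picture). Writing $P=P_2+P_0$ for the homogeneous pieces, the correct relation is
\[
\varphi_{KM}(v,\tau,z)=y^{-1}P(y^{1/2}v,z)\,\varphi^0(v,\tau,z)=\bigl(P_2(v,z)+y^{-1}P_0(z)\bigr)\,\varphi^0(v,\tau,z).
\]
Carrying the extra $y^{-1}P_0$ term through your computation of $-L\varphi_{KM}$ and matching coefficients of $y$ and $y^2$ against $dd^c\varphi^0$ now gives
\[
P_2+4\pi\,Q(v_z)\,P_0=2\pi\,dd^c R,\qquad Q(v_z)\,P_2=-\pi\,dR\wedge d^cR,
\]
rather than your two displayed identities. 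In particular your first identity $P=2\pi\,dd^cR$ fails (the left side has a constant term in $v$, the right side does not), and your second identity $dR\wedge d^cR=-2Q(v_z)\,dd^cR$ is off by a term $4Q(v_z)^2P_0$. Once you incorporate $P_0$, the endgame you describe (passing to explicit coordinates at $z_0$ and comparing with the formula \citep[(7.16)]{KudlaOrthogonal}) does go through; this is the computation carried out in \citet{BruinierFunke}.
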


Using this result, we can find a different expression for the form $\overline{\partial}\partial \phi(v,w,z,s)$. Let $L$ be the lowering operator in the previous Proposition. For a symmetric positive definite matrix $T=\left(\begin{smallmatrix} a & b \\ b & c \end{smallmatrix}\right)$ and $\tau=x+iy \in \mathbb{H}$, define
\begin{equation}
\widetilde{M}_T(\tau,s)=4\pi y^2 \frac{\partial}{\partial \overline{\tau}}(M_T(y,s)e^{-2\pi iax}).
\end{equation}
One computes
\begin{equation}
\widetilde{M}_T(\tau,s)=\widetilde{C}(T,s) \cdot y^{1-k/2} \cdot M_{1-k/2,s/2} \left(\left| \frac{4\pi \det(T)}{c}y\right| \right) e^{\frac{2\pi b^2}{c}y} \cdot e^{-2\pi i ax},
\end{equation}
with $\widetilde{C}(T,s)=\pi i C(T,s) \cdot (s+s_0)$.

\begin{lemma}
For $v,w \in \Omega_T(V_1)$ and $Re(s) \gg 0$, we have
\[
\overline{\partial}\partial \phi(v,w,z,s) = \int_{0}^{\infty} \widetilde{M}_T(y,s) \varphi_{KM}(v,y,z) \frac{dy}{y^2}.
\]
\end{lemma}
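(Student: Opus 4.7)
The plan is to start from the Laplace transform \eqref{eq:Laplace_transform}, apply $\overline{\partial}\partial$ to it, pass the derivatives inside the $y$-integral, invoke the Bruinier--Funke identity \eqref{eq:ddc_BrFu}, and then integrate by parts in $y$ to repackage the result in terms of $\widetilde{M}_T$. First, using the identity $\varphi^0(v, iy, z) = y \cdot e^{-2\pi y(Q(v_{z^\perp}) - Q(v_z))}$, I would rewrite \eqref{eq:Laplace_transform} in the cleaner form
\[
\phi(v, w, z, s) = \int_0^\infty M_T(y, s)\, \varphi^0(v, iy, z)\, \tfrac{dy}{y^2}.
\]
Applying $\overline{\partial}\partial$ to both sides and interchanging with the integral (justified for $z \notin \mathbb{D}_v \cup \mathbb{D}_w$ by the same type of uniform-convergence argument underlying \autoref{prop:normal_convergence}), and then using the identity $\overline{\partial}\partial\varphi^0(v, \tau, z) = -4\pi y^2\, \partial_{\overline{\tau}}\varphi_{KM}(v, \tau, z)$ --- a restatement of \eqref{eq:ddc_BrFu} obtained from $dd^c = (2\pi i)^{-1}\overline{\partial}\partial$ and $L = -2iy^2\, \partial_{\overline{\tau}}$ --- yields
\[
\overline{\partial}\partial\phi(v, w, z, s) = -4\pi \int_0^\infty M_T(y, s) \bigl[\partial_{\overline{\tau}}\varphi_{KM}(v, \tau, z)\bigr]_{\tau = iy}\, dy.
\]

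The main calculation is to evaluate $\partial_{\overline{\tau}}\varphi_{KM}(v, \tau, z)|_{\tau = iy}$ in terms of a $y$-derivative of $\varphi_{KM}(v, iy, z)$. Since $v \in \Omega_T(V_1)$ has $Q(v) = a$ (the $(1,1)$-entry of $T$), the explicit formula for the Weil representation action on $g_\tau$ shows that the entire $x$-dependence of $\varphi_{KM}(v, \tau, z)$ is the scalar factor $e^{2\pi i a x}$. Writing $\partial_{\overline{\tau}} = \tfrac{1}{2}(\partial_x + i\partial_y)$ therefore gives
\[
\partial_{\overline{\tau}}\varphi_{KM}(v, \tau, z)\bigr|_{\tau = iy} = \tfrac{i}{2}\left(2\pi a\, \varphi_{KM}(v, iy, z) + \tfrac{d}{dy}\varphi_{KM}(v, iy, z)\right).
\]
Substituting this and integrating by parts in $y$ on the term involving $\tfrac{d}{dy}\varphi_{KM}$ transfers the $y$-derivative onto $M_T$ and produces
\[
\overline{\partial}\partial\phi(v, w, z, s) = 2\pi i \int_0^\infty \bigl(M'_T(y, s) - 2\pi a\, M_T(y, s)\bigr)\, \varphi_{KM}(v, iy, z)\, dy.
\]
The parenthesized factor equals $\widetilde{M}_T(y, s)/(2\pi i y^2)$ directly from the definition of $\widetilde{M}_T$ restricted to $x = 0$, so the integral rearranges into the desired form.

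The hard part will be verifying that the boundary contributions at $y = 0$ and $y = \infty$ in the integration by parts vanish. This should follow from the Whittaker asymptotics \eqref{eq:Whittaker_asymp_z_0}--\eqref{eq:Whittaker_asymp_z_infty} (which, for $\mathrm{Re}(s) \gg 0$, force $M_T(y, s)$ to vanish at $y = 0$ and decay exponentially as $y \to \infty$) combined with the exponential decay in $y$ of the Gaussian factor in $\varphi_{KM}(v, iy, z)$ for $z \notin \mathbb{D}_v$. The remaining tasks --- bookkeeping the factors in the passage from $dd^c$ to $\overline{\partial}\partial$, tracking the powers of $y$, and justifying the interchange of $\overline{\partial}\partial$ with the $y$-integral --- are then routine.
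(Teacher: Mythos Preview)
Your approach is essentially the paper's: both start from the Laplace transform \eqref{eq:Laplace_transform}, apply the Bruinier--Funke identity \eqref{eq:ddc_BrFu}, and move the $\overline{\partial}_\tau$ derivative from $\varphi_{KM}$ onto $M_T$, with boundary terms killed by Whittaker asymptotics. The only cosmetic difference is that the paper introduces an auxiliary $x$-integral over $[0,1]$ and applies Stokes on the truncated strip $\mathcal{F}_N=[0,1]\times[N^{-1},N]\subset\mathbb{H}$, whereas you compute $\partial_{\overline{\tau}}|_{\tau=iy}$ explicitly (using that the $x$-dependence is $e^{2\pi i a x}$) and do a one-variable integration by parts in $y$; these are the same calculation packaged two ways.

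One correction: you assert that \eqref{eq:Whittaker_asymp_z_infty} forces $M_T(y,s)$ to \emph{decay} exponentially as $y\to\infty$. This is false --- the asymptotic gives $M_T(y,s)\sim C'\,e^{2\pi a y}$, i.e.\ exponential \emph{growth} (the $e^{z/2}$ in \eqref{eq:Whittaker_asymp_z_infty} combines with the $e^{2\pi b^2 y/c}$ factor to produce $e^{2\pi a y}$). What rescues the boundary term is exactly what you say next: the Gaussian in $\varphi_{KM}(v,iy,z)$ decays like $e^{-2\pi y(Q(v_{z^\perp})-Q(v_z))}$ with $Q(v_{z^\perp})-Q(v_z)>a$ strictly whenever $z\notin\mathbb{D}_v$, so the \emph{product} $M_T\cdot\varphi_{KM}$ decays. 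Your argument therefore goes through; just delete the incorrect claim about $M_T$ itself. (Also, $z\notin\mathbb{D}_v$ suffices throughout --- $\phi(v,w,z,s)$ is smooth across $\mathbb{D}_w$, since $w$ enters only through the scalar $Q(p_w(v))$.)
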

\begin{proof}
Recall the integral expression for $\phi(v,w,z,s)$ given in \eqref{eq:Laplace_transform}. In terms of $\varphi^0(v,\tau,z)$, we have
\begin{equation*}
\begin{split}
\phi(v,w,z,s)&=\int_{0}^{\infty} M_T(y,s) \varphi^0(v,y,z) \frac{dy}{y^2} \\
&=\int_0^\infty \int_{0}^1 M_T(y,s) e^{-2\pi i Q(v)x} \varphi^0(v,\tau,z)\frac{dxdy}{y^2}.
\end{split}
\end{equation*}
Using \eqref{eq:ddc_BrFu}, we obtain
\begin{equation*}
\begin{split}
dd^c \phi(v,w,z,s)&=\int_0^\infty \int_{0}^1 M_T(y,s) e^{-2\pi i Q(v)x} dd^c \varphi^0(v,\tau,z)\frac{dxdy}{y^2} \\
&=- \int_0^\infty \int_{0}^1 M_T(y,s) e^{-2\pi i Q(v)x} \cdot L \varphi_{KM}(v,\tau,z)\frac{dxdy}{y^2} \\
&=- \int_0^\infty \int_{0}^1 M_T(y,s) e^{-2\pi i Q(v)x} \cdot \overline{\partial}( \varphi_{KM}(v,\tau,z)d\tau) \\
&=-\lim_{N \rightarrow \infty} \int_{\mathcal{F}_N} M_T(y,s) e^{-2\pi i Q(v)x} \cdot \overline{\partial}( \varphi_{KM}(v,\tau,z)d\tau),
\end{split}
\end{equation*}
where $\mathcal{F}_N=[0,1] \times [N^{-1},N] \subset \mathbb{H}$. Applying Stokes's Theorem, we find
\begin{equation*}
\begin{split}
dd^c \phi(v,w,z,s)&=\int_0^\infty \int_0^1 \overline{\partial}(M_T(y,s)e^{-2\pi iQ(v)x}) \wedge \varphi_{KM}(v,\tau,z)d\tau \\ 
& \quad -\lim_{N \rightarrow \infty} (M_T(N,s) \varphi_{KM}(v,N,z)-M_T(N^{-1},s) \varphi_{KM}(v,N^{-1},z)).
\end{split}
\end{equation*}
Since $dd^c=-(2\pi i)^{-1}\partial \overline{\partial}$, we see that to establish the claim it suffices to show that the second term in the right hand side vanishes. This follows for $z \notin \mathbb{D}_v$ from the asymptotic behaviour of $M_T(y,s)$ given by \eqref{eq:Whittaker_asymp_z_0} and \eqref{eq:Whittaker_asymp_z_infty}.
\end{proof}

We can now express $\tilde{\Phi}(T,\varphi,s)_K$ as a regularized theta lift. Namely, for $T=\left(\begin{smallmatrix} a & b \\ b & c \end{smallmatrix} \right) \in Sym_2(F)$ totally positive definite, define a function 
\begin{equation*}
\widetilde{\mathcal{M}}_{T}(na,s): N(F)\backslash N(\mathbb{A}) \times  A(\mathbb{R})^0 \rightarrow \mathbb{C}
\end{equation*}
by
\begin{equation}\label{eq:tilde_M_T}
\begin{split}
\widetilde{\mathcal{M}}_T(nm(y^{1/2},t^{1/2}),s) & =2 \kappa_{\dim(V)}^{-1} \cdot \overline{\psi_T(n)} \cdot M_{\sigma_1(T)}(y_1,s) \widetilde{M}_{\sigma_1(T)^\iota}(t_1,s) \\
& \quad \cdot y_1^{1-\frac{\kappa}{2}}t_1^{-\frac{\kappa}{2}} \cdot \prod_{i=2}^d W_{a_i}(y_i)\cdot W_{c_i}(t_i).
\end{split}
\end{equation}

We also need to specify a Schwartz form
\begin{equation*}
\tilde{\varphi}_{\infty} \in [\mathcal{S}(V(\mathbb{R})^2) \otimes \mathcal{A}^{1,1}(\mathbb{D})]^{H(\mathbb{R})}
\end{equation*}
to define the regularized theta lift. Define
\begin{equation}
\tilde{\varphi}^{1,1}(v,w,z)= \varphi^0(v,z) \cdot \varphi_{KM}(w,z) \in \mathcal{S}(V_1^2) \otimes \mathcal{A}^{1,1}(\mathbb{D})
\end{equation}
and
\begin{equation} \label{eq:def_tilde_varphi_infty}
\tilde{\varphi}_\infty(v,w,z)= \tilde{\varphi}^{1,1}(v_1,w_1,z) \otimes \varphi^0_+(v_2,w_2) \otimes \ldots \otimes \varphi^0_+(v_d,w_d),
\end{equation}

so that for every $g \in Sp_4(\mathbb{A}_F)$ and $\varphi \in \mathcal{S}(V(\mathbb{A}_f)^2)$ fixed by $K$, the theta function 
\begin{equation}
\theta(g;\varphi \otimes \tilde{\varphi}_{\infty})_K=\sum_{(v,w) \in V(F)^2} \omega(g_f)\varphi(v,w) \cdot \omega(g_\infty)\tilde{\varphi}_\infty(v,w)
\end{equation}
defines a $(1,1)$-form on $X_K$. Given a measurable function $f:Sp_4(\mathbb{A}_F) \rightarrow \mathbb{C}$ that satisfies $f(ng)=f(g)$ for all $n \in N(F)$, define
\begin{equation} \label{eq:def_regularized_theta_2}
(\widetilde{\mathcal{M}}_T(s),f)^{reg}=\int_{A(\mathbb{R})^0}\int_{N(F) \backslash N(\mathbb{A})} \widetilde{\mathcal{M}}_T(na,s)f(na) \, dnda,
\end{equation}
provided that the integral converges. Then we have the identity
\begin{equation} \label{eq:Phi_T_varphi_2_alternative}
\tilde{\Phi}(T,\varphi,s)_K=(\widetilde{\mathcal{M}}_T(s),\theta(\cdot;\varphi \otimes \tilde{\varphi}_\infty)_K)^{reg},
\end{equation}
valid in an open set $U \subset X_K$ whose complement has measure zero. This is proved in the same way as \autoref{proposition:theta_lift_convergence}.

The following is the desired result that shows that one can evaluate $[\tilde{\Phi}(T,\varphi,s)]$ by interchanging the order of integration.

\begin{proposition} \label{prop:interchange_integrals}
Let $K \subset H(\mathbb{A}_f)$ be an open compact subgroup that fixes $\varphi$ and let $\alpha \in \mathcal{A}_c^{n-1,n-1}(X_K)$. Then, for $Re(s) \gg 0$, we have
\[
([\tilde{\Phi}(T,\varphi,s)_K],\alpha)=\int_{A(\mathbb{R})^0} \int_{N(F)\backslash N(\mathbb{A})}\widetilde{\mathcal{M}}_T(na,s) \int_{X_K} \theta(na;\varphi \otimes \tilde{\varphi}_\infty)_K \wedge \alpha \ dn da.
\]
\end{proposition}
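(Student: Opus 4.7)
The plan is to deduce the formula from identity \eqref{eq:Phi_T_varphi_2_alternative}, which expresses $\tilde{\Phi}(T,\varphi,s)_K$ as the regularized theta integral $(\widetilde{\mathcal{M}}_T(s), \theta(\cdot;\varphi\otimes\tilde{\varphi}_\infty)_K)^{reg}$ on a dense open subset $U \subset X_K$ whose complement has measure zero. Since $\tilde{\Phi}(T,\varphi,s)_K$ is locally integrable (the argument of \autoref{prop:Current_Sum_Converges_L1} applies verbatim to the form $\tilde{\omega}$) and $\alpha$ has compact support, pairing with $\alpha$ gives
\[
([\tilde{\Phi}(T,\varphi,s)_K],\alpha) = \int_{X_K}\!\left(\int_{A(\mathbb{R})^0}\!\int_{N(F)\backslash N(\mathbb{A})} \widetilde{\mathcal{M}}_T(na,s)\, \theta(na;\varphi\otimes\tilde{\varphi}_\infty)_K\, dn\, da\right)\!\wedge\alpha.
\]
The proposition then amounts to applying Fubini's theorem to interchange the integration over $X_K$ with the integration over $A(\mathbb{R})^0 \times (N(F)\backslash N(\mathbb{A}))$, and the remainder of the argument is devoted to establishing absolute convergence of the resulting triple integral for $Re(s) \gg 0$.

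First I would unfold the integral over $N(F)\backslash N(\mathbb{A})$ using the Fourier expansion of $\theta(\cdot;\varphi\otimes\tilde{\varphi}_\infty)_K$ along $N$, which isolates the contribution of pairs $(v,w) \in \Omega_T(F)$ exactly as in the first step of the proof of \autoref{proposition:theta_lift_convergence}. Since $\alpha$ is compactly supported, the $X_K$-integration reduces to integration over a relatively compact set $\Omega$ inside a fundamental domain of $\mathbb{D}^+$, and the required bound takes the form
\[
\sum_{(v,w)\in \Omega_T(F)\cap \operatorname{Supp}(\varphi)}|\varphi(v,w)|\int_{A(\mathbb{R})^0}|\widetilde{\mathcal{M}}_T(a,s)|\int_\Omega \|\omega(a)\tilde{\varphi}_\infty(v,w,z) \wedge \alpha(z)\|\, d\mu(z)\, da < \infty.
\]

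The main obstacle is that $\Omega$ may intersect the special divisors $\mathbb{D}_v$ or $\mathbb{D}_w$ for pairs $(v,w)$ appearing in the sum, so a pointwise-in-$z$ estimate alone (as in \autoref{lemma:theta_lift_convergence}) does not suffice. The key advantage of the choice $\tilde{\varphi}_\infty = \varphi^0(v)\cdot\varphi_{KM}(w)$ is that performing the $a$-integral first, using the integral representation \eqref{eq:Laplace_transform} together with the lemma above expressing $\overline{\partial}\partial \phi(w,v,z,s)$ as a Laplace transform of $\widetilde{M}_{T^\iota}(t,s)\cdot\varphi_{KM}(w,t,z)$, produces the factorization $\phi(v,w,z,s)\cdot\partial\overline\partial\phi(w,v,z,s)$. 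Both factors are locally integrable on $\mathbb{D}^+$ by \autoref{lem:derivatives_loc_int} (applied with the roles of $v$ and $w$ interchanged for the second factor) and have only logarithmic-type singularities along $\mathbb{D}_v$ and $\mathbb{D}_w$ respectively; since $\alpha$ is bounded on $\Omega$, this gives an integrable bound for the inner double integral, with dependence on $(v,w)$ controlled by the same Siegel-type lattice-sum estimate used in the proofs of \autoref{prop:normal_convergence} and \autoref{lemma:theta_lift_convergence} for $Re(s) \gg 0$. Combining these ingredients yields absolute convergence of the triple integral, and Fubini's theorem then produces the identity in the statement.
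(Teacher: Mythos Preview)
Your approach is broadly correct but differs from the paper's in its organization of the absolute-convergence estimate, and there is one imprecision worth flagging.

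The paper, after the same unfolding along $N$, does \emph{not} keep the infinite sum over $\Omega_T(F)$ against a compact domain $\Omega$. Instead it unfolds the $\Gamma$-orbit in $\Omega_T(F)$ as well, reducing to a \emph{single} pair $(v,w)$ and an integral over the non-compact quotient $\Gamma_{v,w}\backslash\mathbb{D}^+$ against the bounded form $\eta$ pulled back from $X_\Gamma$. It then bounds $\|\tilde{\varphi}_\infty\|$ by $C\cdot Q(w_{z^\perp})\varphi^0(v,z)\varphi^0(w,z)$, computes the resulting $a$-integral $f(v,w,z,s)$ explicitly in the three regimes $|Q(v_z)|,|Q(w_z)|>\epsilon$; $|Q(v_z)|\to 0$; $|Q(w_z)|\to 0$, and concludes integrability on $\Gamma_{v,w}\backslash\mathbb{D}^+$ via the $H'(\mathbb{R})$-invariance and \autoref{lem:integrable_fcn_M}. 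Your route instead keeps the sum and a compact $\Omega$, which also works but requires you to separate the finitely many pairs $(v,w)$ for which $\mathbb{D}_v\cup\mathbb{D}_w$ meets $\Omega$ (handled by local integrability) from the infinite tail (handled by the lattice-sum bound). The paper's unfolding buys a cleaner single estimate and reuses \autoref{lem:integrable_fcn_M} already in place; your approach is more elementary but needs this extra case split made explicit.

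The imprecision: you write that performing the $a$-integral ``produces the factorization $\phi(v,w,z,s)\cdot\partial\overline{\partial}\phi(w,v,z,s)$''. That is true only for the integral \emph{without} absolute values. For Fubini you need $\int_{A(\mathbb{R})^0}|\widetilde{\mathcal{M}}_T(a,s)|\cdot\|\omega(a)\tilde{\varphi}_\infty(v,w,z)\|\,da$, and the Laplace-transform identities \eqref{eq:Laplace_transform} and the preceding lemma do not apply once the norm of the form-valued $\varphi_{KM}$ and the modulus of $\widetilde{M}_{T^\iota}$ are inserted. What is true is that this absolute integral has the \emph{same asymptotic behaviour} in $z$ as $|\phi|\cdot\|\partial\overline{\partial}\phi\|$, which is enough; but you should obtain it by bounding $\|\varphi_{KM}\|$ as the paper does and then integrating, rather than by invoking the exact transform.
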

\begin{proof}
Performing the integration over $N(F)\backslash N(\mathbb{A})$, we find
\[
([\tilde{\Phi}(T,\varphi,s)_K],\alpha)=\int_{X_K} \int_{A(\mathbb{R})^0} \widetilde{\mathcal{M}}_T(a,s) \sum_{(v,w) \in \Omega_T(F)} \varphi(v,w) \cdot \omega(a)\tilde{\varphi}_\infty(v,w,z) \wedge \alpha
\]
and we need to prove that this expression is absolutely convergent. Since $K$ has only finitely many orbits on the support of $\varphi$, it suffices to show that
\[
\int_{\Gamma_{v,w} \backslash \mathbb{D}^+} \int_{A(\mathbb{R})^0} \widetilde{\mathcal{M}}_T(a,s) \cdot \omega(a)\tilde{\varphi}_\infty(v,w,z) \wedge \eta
\]
is absolutely convergent, for any vectors $v,w \in \Omega_T(F)$ and any compactly supported form $\eta \in \mathcal{A}_c^{n-1,n-1}(\Gamma \backslash \mathbb{D}^+)$. This will follow if we can show that
\[
\int_{\Gamma_{v,w} \backslash \mathbb{D}^+} \int_{A(\mathbb{R})^0} |\widetilde{\mathcal{M}}_T(a,s)| \cdot ||\omega(a)\tilde{\varphi}_\infty(v,w,z)|| da d\mu(z) < \infty,
\]
that is, we need to show that the inner integral in this expression yields an integrable function on $\Gamma_{v,w} \backslash \mathbb{D}^+$. Denote this inner integral by $f(v,w,z,s)$. Note that
\[
||\tilde{\varphi}_\infty(v,w,z)||=\sum_{i}||P_i(w,z)|| \cdot \varphi^0(v,z)\varphi^0(w,z),
\]
where the sum over $i$ is finite and, for fixed $z$, the $P_i(w,z)$ are polynomials in $w$ (valued in differential forms). These polynomials satisfy $||P_i(hw,hz)||=||P_i(w,z)||$ for all $h \in H(\mathbb{R})$ and have degree $2$; see \cite[(7.16)]{KudlaOrthogonal}. Hence we have
\[
||\tilde{\varphi}_\infty(v,w,z)|| < C \cdot Q(w_{z^\perp}) \cdot \varphi^0(v,z)\varphi^0(w,z)
\]
for some constant $C>0$. 
Using this estimate, we find that
\begin{equation*}
\begin{split}
f(v,w,z,s)=O(Q(v_{z^\perp})^{-\frac{s+s_0}{2}} \cdot Q(w_{z^\perp})^{-\frac{s+s_0}{2}}), \quad \text{ when } |Q(v_z)|, |Q(w_z)|> \epsilon >0,\\
f(v,w,z,s)=O(\log(|Q(v_z)|) \cdot |Q(w_{z^\perp})|^{-\frac{s+s_0}{2}+1}), \quad \text{ as } |Q(v_z)| \rightarrow 0, \\
f(v,w,z,s)=O(\log(|Q(w_z)|) \cdot |Q(v_{z^\perp})|^{-\frac{s+s_0}{2}}), \quad \text{ as } |Q(w_z)| \rightarrow 0.
\end{split}
\end{equation*}
Since $f(v,w,h'z,s)=f(v,w,z,s)$ for $h' \in H'(\mathbb{R})=(H_v)_+(\mathbb{R}) \cap (H_w)_+(\mathbb{R})$ and the quotient $\Gamma_{v,w} \backslash \mathbb{D}_{v,w}^+$ has finite volume, the claim follows from these estimates by \autoref{lem:integrable_fcn_M} applied to $H'(\mathbb{R})\backslash \mathbb{D}^+$.
\end{proof}

\begin{corollary} \label{cor:interchange_integrals}
Let $K \subset H(\mathbb{A}_f)$ be an open compact subgroup that fixes $\varphi$ and let $\alpha \in \mathcal{A}_c^{n-1,n-1}(X_K)$ be a closed form. For $g \in Sp_4(\mathbb{A}_F)$, write
\[
\theta(g;\varphi,\alpha)=\int_{X_K} \theta(g;\varphi \otimes \tilde{\varphi}_\infty) \wedge \alpha.
\]
Then
\begin{equation*}
\begin{split}
([\Phi(T,\varphi)_K]-[\Phi(T^\iota,\varphi^\iota)_K],\alpha)= CT_{s=(n-1)/2} [ & (\widetilde{\mathcal{M}}_T(s),\theta(\cdot;\varphi,\alpha))^{reg} \\
& - (\widetilde{\mathcal{M}}_{T^\iota}(s),\theta(\cdot;\varphi^\iota,\alpha))^{reg}].
\end{split}
\end{equation*}
\end{corollary}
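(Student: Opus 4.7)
The plan is to assemble the corollary from two facts already established in this section: equation \eqref{eq:cohomologous_currents_2}, which identifies the constant term at $s=(n-1)/2$ of $[\tilde{\Phi}(T,\varphi,s)]-[\tilde{\Phi}(T^\iota,\varphi^\iota,s)]$ modulo $\mathrm{im}(\partial)+\mathrm{im}(\overline{\partial})$ with $[\Phi(T,\varphi)]-[\Phi(T^\iota,\varphi^\iota)]$, and \autoref{prop:interchange_integrals}, which for $Re(s) \gg 0$ expresses $([\tilde{\Phi}(T,\varphi,s)_K],\alpha)$ as an iterated integral whose outer integrand is a regularized theta lift.

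First I would observe that because $\alpha \in \mathcal{A}_c^{n-1,n-1}(X_K)$ is closed, the pairing of any current of the form $\partial \beta$ or $\overline{\partial}\beta$ with $\alpha$ vanishes: indeed $d\alpha=0$ together with the bidegree decomposition forces $\partial\alpha=0=\overline{\partial}\alpha$ separately, so the usual adjunction formulas for $\partial$ and $\overline{\partial}$ on currents give zero. Consequently the equivalence in \eqref{eq:cohomologous_currents_2} promotes to a genuine equality of complex numbers after pairing with $\alpha$, namely
\[
([\Phi(T,\varphi)_K]-[\Phi(T^\iota,\varphi^\iota)_K], \alpha) = CT_{s=(n-1)/2}([\tilde{\Phi}(T,\varphi,s)_K]-[\tilde{\Phi}(T^\iota,\varphi^\iota,s)_K], \alpha).
\]
Next I would invoke \autoref{prop:interchange_integrals} applied separately to the pairs $(T,\varphi)$ and $(T^\iota,\varphi^\iota)$; the hypotheses are satisfied since $T^\iota$ is again totally positive definite and $\varphi^\iota$ is again fixed by $K$. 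This yields, for $Re(s) \gg 0$,
\[
([\tilde{\Phi}(T,\varphi,s)_K]-[\tilde{\Phi}(T^\iota,\varphi^\iota,s)_K], \alpha) = (\widetilde{\mathcal{M}}_T(s), \theta(\cdot;\varphi,\alpha))^{reg} - (\widetilde{\mathcal{M}}_{T^\iota}(s), \theta(\cdot;\varphi^\iota,\alpha))^{reg},
\]
and taking $CT_{s=(n-1)/2}$ of both sides produces the claim.

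The subtle point is the passage to the constant term: the right-hand side of the second display is initially defined only by an iterated integral convergent in a right half-plane, so one must know that it admits meromorphic continuation to a neighbourhood of $s=(n-1)/2$. This is where the main obstacle lies, and it is handled as follows. The meromorphic continuation in $\tilde{\mathcal{D}}^{1,1}(X)$ of $[\tilde{\Phi}(T,\varphi,s)]-[\tilde{\Phi}(T^\iota,\varphi^\iota,s)]$ stated just before \eqref{eq:cohomologous_currents_2}, combined with the vanishing of the boundary ambiguity against the closed form $\alpha$, shows that the left-hand side of the second display extends meromorphically in $s$; the identity of \autoref{prop:interchange_integrals} therefore transports this continuation to the regularized theta lifts on the right. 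Once meromorphicity is known on both sides, the identity valid for $Re(s) \gg 0$ persists as an identity of meromorphic functions, and taking constant terms at $s=(n-1)/2$ respects it, completing the argument.
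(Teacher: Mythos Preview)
Your proof is correct and follows exactly the approach indicated in the paper, which simply says the corollary follows from \eqref{eq:cohomologous_currents_2} and \autoref{prop:interchange_integrals}. You have correctly unpacked this by noting that closedness of $\alpha$ kills the $\partial+\overline{\partial}$ ambiguity in \eqref{eq:cohomologous_currents_2}, and you have also made explicit the meromorphic continuation issue that the paper leaves implicit.
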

\begin{proof} This follows from \eqref{eq:cohomologous_currents_2} and the Proposition.
\end{proof}

\section{An example: Products of Shimura curves} \label{section:Example_Products_Shimura_curves}

The goal of this section is to illustrate the main constructions and results above in one of the simplest cases: when the Shimura variety attached to $GSpin(V)$ is a product of Shimura curves attached to a quaternion algebra $B$ over $\mathbb{Q}$. In this case, the currents in \autoref{section:Currents_theta_lifts} can be described in the more familiar language of Hecke correspondences and CM points. We give this description in \autoref{subsection:currents_examples}. 

Throughout this section, we fix an indefinite quaternion algebra $B$ over $\mathbb{Q}$; we assume that $B \ncong M_2(\mathbb{Q})$. We write $S$ for the set of places where $B$ ramifies and $d(B)$ for the discriminant of $B$. Denote by $n: B \rightarrow F$ the reduced norm and let $V=B$ endowed with the quadratic form given by $Q(v)=n(v)$. Then $(V,Q)$ is a non-degenerate quadratic space over $\mathbb{Q}$ with signature $(2,2)$ and $\chi_V=1$.


\subsection{Quaternion algebras and Shimura curves} \label{subsection:quaternion_algs_Shim_curves} 


The group $H=GSpin(V)$ can in this case be described more concretely. Namely, consider $B^\times$ as an algebraic group over $\mathbb{Q}$ defined by
\begin{equation}
B^\times(R)=(B \otimes_\mathbb{Q} R)^\times
\end{equation}
for any $\mathbb{Q}$-algebra $R$ and let
\begin{equation}
B^\times \times_{GL_1} B^\times=\{(g_1,g_2) \in B^\times \times B^\times | \ n(g_1)=n(g_2) \}.
\end{equation}
The group $B^\times \times B^\times$ acts on $V$ by sending $(g_1,g_2)\cdot x=g_1xg_2^{-1}$. This induces an exact sequence
\begin{equation}
1 \rightarrow \mathbb{G}_m \rightarrow B^\times \times_{GL_1} B^\times \rightarrow SO(V) \rightarrow 1
\end{equation}
showing that 
\begin{equation}
SO(V) \cong \mathbb{G}_m \backslash (B^\times \times_{GL_1} B^\times), \ \ \ GSO(V) \cong \mathbb{G}_m \backslash (B^\times \times B^\times)
\end{equation}
and in fact one has
\begin{equation}
H \cong B^\times \times_{GL_1} B^\times.
\end{equation}
The theory in \autoref{section:Shimura_varieties} applies to this case. If we denote by $\mathbb{H}$ the Poincar\'e upper half plane, we have
\begin{equation}
\mathbb{D}^+ \cong \mathbb{H} \times \mathbb{H}.
\end{equation}
Fix once and for all an isomorphism $\iota: B \otimes_\mathbb{Q} \mathbb{A}^S \cong M_2(\mathbb{A}^S)$. For $p \in S$, denote by $\mathcal{O}_{B,p}$ the maximal order of $B \otimes_\mathbb{Q} \mathbb{Q}_p$. Let 
\begin{equation} \label{eq:O_B_hat}
\hat{\mathcal{O}}_B=\iota^{-1}(M_2(\prod_{p \notin S} \mathbb{Z}_p)) \times \prod_{p \in S} \mathcal{O}_{B,p}, \qquad K_B=\hat{\mathcal{O}}_B^\times.
\end{equation}
Then $\hat{\mathcal{O}}_B$ is a maximal order of $B \otimes_\mathbb{Q} \mathbb{A}_f$ and $K_{B}$ is a maximal compact subgroup of $B(\mathbb{A}_f)^\times$.

Define the (full level) Shimura curve attached to $B$ to be
\begin{equation}
X_{B,K}=B^\times(\mathbb{Q}) \backslash (\mathbb{H}^{\pm} \times B^\times(\mathbb{A}_f))/K.
\end{equation}
Then $X_{B,K}$ is the set of complex points of a complete curve $C_K$ defined over $\mathbb{Q}$. 
Let $K=(K_B \times K_B) \cap H(\mathbb{A}_f)$ and define the (full level) Shimura variety:
\begin{equation}
X_K=H(\mathbb{Q}) \backslash (\mathbb{D} \times H(\mathbb{A}_f))/K.
\end{equation}
Thus $X_{B,K}$ is the set of complex points of the surface $C_K \times C_K$. By \eqref{eq:connected_components_X_K}, the surface $X_{B,K}$ is connected.

Given $v \in V$ of positive norm and denoting by $W \subset V$ its orthogonal complement, we have
\begin{equation}
H_v=GSpin(W) \cong B^\times
\end{equation}
as algebraic groups over $\mathbb{Q}$. The special divisors $Z(v,h)_K$ are hence given by embedded Shimura curves in $X_K$.

\subsection{Examples of $(1,1)$-currents} \label{subsection:currents_examples} Let us give some explicit examples of the currents introduced in \autoref{section:Currents_theta_lifts} in the case when $X_K$ is a product of Shimura curves, in the more classical language of Hecke correspondences and CM points. Assume that $F=\mathbb{Q}$ for simplicity and denote by $d(B)=p_1 \cdots p_{2r}$ the discriminant of $B$. Let $\hat{\mathcal{O}}_B$ and $K_B=\hat{\mathcal{O}}_B^\times$ be as in \eqref{eq:O_B_hat} and let $K = (K_B \times K_B) \cap H(\mathbb{A}_f)$. Then $\mathcal{O}_B=B \cap \hat{\mathcal{O}}_B$ is a maximal order in $B$. Denote by $\mathcal{O}_B^1 \subset \mathcal{O}_B^\times$ be the subgroup of units of reduced norm $1$. The group $\mathcal{O}_B^1$ acts on $\mathbb{H}$ through the embedding $\iota_\infty: \mathcal{O}_B^1 \rightarrow SL_2(\mathbb{R})$ and we conclude that
\begin{equation}
X_{B,K} \cong \mathcal{O}_B^1 \backslash \mathbb{H}=:X_0^B
\end{equation}
is the full level Shimura curve $X_0^B$ and that $X_K=X_0^B \times X_0^B$.

\subsubsection{Special divisors} Consider the vector $v_1=1 \in B=V$ of norm $1$. Then the inclusion $H_{v_1} \subset H$ corresponds to the diagonal embedding $\Delta: B^\times \rightarrow B^\times \times_{GL_1} B^\times$ and hence the map $i_{v_1,1,K}: X(v_1)_K \rightarrow X_K$ defined in \eqref{eq:def_i_U,h,K} is just the diagonal 
\begin{equation}
\Delta:X_0^B \rightarrow X_0^B \times X_0^B.
\end{equation}

More generally, suppose $v \in \mathcal{O}_B$ has reduced norm $d$ and consider the map $i_{v,1,K}:X(v)_K \rightarrow X_K$. If $d$ equals a prime $p \nmid d(B)$ then the intersection $H_v(\mathbb{Q})\cap K$ is an Eichler order $\mathcal{O}_B(p)$ of level $p$ in $B$ and the map $i_{v,1,K}: X(v)_K \rightarrow X_K$ equals the map
\begin{equation}
X_0^B(p) \rightarrow X_0^B \times X_0^B
\end{equation}
whose image is the Hecke correspondence $T(p)$. Similarly, if $d$ is a divisor of $d(B)$, we obtain the graph of the Atkin-Lehner involution $w_d$.

\subsubsection{Currents for connected cycles: $G(v,w)_\Gamma$ and $[\Phi(v,w)_\Gamma]$} \label{subsubsection:example_connected_currents} Consider now $v,w \in B$ spanning a positive definite plane. To simplify matters, let us assume that $v=1$ and that $w \in \mathcal{O}_B$ is such that $R:=\mathbb{Z}[w]$ is the full ring of integers of an imaginary quadratic field $L=R \otimes_\mathbb{Z}\mathbb{Q}$; such an $R$ is then automatically optimally embedded in $\mathcal{O}_B$ (recall that an embedding $j:R \hookrightarrow \mathcal{O}_B$ is said to be optimal if $j(L) \cap \mathcal{O}_B =j(R)$). The diagram \eqref{eq:cycles_diagram} in this case becomes
\begin{equation*}
\xymatrixcolsep{4pc} \xymatrix{ \{ \tau_{p_{v^\perp}(w)} \} \ar[r] \ar[d] & \mathbb{H} \ar[r]^{\Delta} \ar[d]^{pr} & \mathbb{H} \times \mathbb{H} \ar[d]^{pr \times pr} \\ \{P_w:=pr(\tau_{p_{v^\perp}(w)})\} \ar[r] & X_0^B \ar[r]^{\Delta} & X_0^B \times X_0^B }
\end{equation*}
and $P_w \in X_0^B$ is a point with CM by $R$ (for one of the two CM-types of $R$). The function $G(v,w)_\Gamma \in \mathcal{C}^\infty(X_0^B - \{P_w\})$ defined by \eqref{eq:def_G(v,w,z)_Gamma} is a Green function for the divisor $[P_w] \in Div(X_0^B)$; we denote this function by $G_{[P_w]}$ and the associated current in $\mathcal{D}^{0,0}(X_0^B)$ by $[G_{[P_w]}]$. The current $[\Phi(v,w)_\Gamma]$ in \eqref{eq:Phi(v,w)_Gamma_current} is given by
\begin{equation}
[\Phi(v,w)_\Gamma] = 2\pi i \cdot \Delta_*([G_{[P_w]}]),
\end{equation}
so that for $\alpha \in \mathcal{A}^{1,1}(X_0^B \times X_0^B)$ we have
\begin{equation}
[\Phi(v,w)_\Gamma](\alpha)=2\pi i \cdot \int_{X_0^B} G_{[P_w]} \cdot \Delta^*(\alpha).
\end{equation}
\subsubsection{The current $[\Phi(v,w,1)_K]$} \label{subsubsection:example_adelic_current} Our next goal is to write down an explicit example of the current $[\Phi(v,w,1)_K]$ in \eqref{eq:Phi(v,w,h)_K_def}. We have
\begin{equation}
H_{v,w}=GSpin(\mathbb{Q} \langle v,w \rangle)=L^\times
\end{equation}
as an algebraic group over $\mathbb{Q}$. The embeddings $H_{v,w} \rightarrow H_v \rightarrow H$ correspond to embeddings of algebraic groups 
\begin{equation}
L^\times \rightarrow B^\times \overset{\Delta}{\rightarrow} B^\times \times_{GL_1} B^\times,
\end{equation}
defined over $\mathbb{Q}$, where the second embedding is just the diagonal. Note that $H_{v,w}(\mathbb{R})=(K \otimes_\mathbb{Q} \mathbb{R})^\times=\mathbb{C}^\times$ with spinor norm the usual norm on $\mathbb{C}$. In particular, every element of this group has positive spinor norm and hence $(H_{v,w})_+(\mathbb{R})=H_{v,w}(\mathbb{R})$ and $(H_{v,w})_+(\mathbb{Q})=H_{v,w}(\mathbb{Q})=L^\times$. Moreover, since $R \rightarrow \mathcal{O}$ is optimal, we have
\begin{equation}
(H_{v,w})_+(\mathbb{Q}) \backslash H_{v,w}(\mathbb{A}_f)/K_U \cong L^\times \backslash \mathbb{A}_{L,f}^\times/\hat{\mathcal{O}}_L^\times =Pic(\mathcal{O}_L).
\end{equation}
Let $\{h_i'|i=1,\ldots,s\}$ be representatives for this double coset and write $h_i'=\gamma_i k_i$ with $\gamma_i \in H_+(\mathbb{Q})$ and $k_i \in K$. Note that we can find $\gamma_i \in (H_v)_+(\mathbb{Q})$ and $k_i \in K \cap H_v(\mathbb{A}_f)$. With such choices, we have
\begin{equation}
\sum_i [\Phi(\gamma_i^{-1}v, \gamma_i^{-1}w)_\Gamma]=\sum_i [\Phi(v, \gamma_i^{-1}w)_\Gamma]
\end{equation}
The sum $\sum_i [\gamma_i^{-1} \cdot P_w]$ defines a divisor on $X_0^B$ of degree $h(\mathcal{O}_L)$. In fact, by Shimura's description of the Galois action, this divisor coincides with the orbit under $Gal(H/L)$ of $P_w \in X_0^B(H)$, with $H$ the Hilbert class field of $L$. Hence we can write 
\begin{equation}
\sum_i [\gamma_i^{-1}P_w]=t_{H/L}[P_w].
\end{equation}
(Here $t_{H/L}$ stands for taking the trace from $H$ to $L$). Since in this case $n=2$, the current $[\Phi(v,w,1)_K]$ involves an additional sum. Namely, we need to choose $\gamma_0 \in H(\mathbb{Q})$ such that $\gamma_0 \cdot \mathbb{D}_U^+=\mathbb{D}_U^-$; we can find such an element satisfying additionally that $\gamma_0 \cdot v=v$ and $\gamma_0 \cdot w=-w$. Now we have to find $k_{i_0} \in K$ and $\gamma_{i_0} \in H_+(\mathbb{Q})$ such that  $\gamma_0 h_i'=\gamma_{i_0}k_{i_0}$. With our choice of $K$ this is easy to do explicitly: let $\epsilon \in \mathcal{O}_B^\times$ be a unit of norm $-1$; such an element always exists by \cite[Corollary 5.9]{Vigneras}. Then $(\epsilon,\epsilon) \in H(\mathbb{Q}) \cap K$. If $h_i'=\gamma_i k_i$ as above, then we can choose $\gamma_{i_0}=\gamma_0 \gamma_i \cdot (\epsilon,\epsilon)^{-1}$ and $k_{i_0}=(\epsilon,\epsilon)k_i$. Then we have $\gamma_{i_0}^{-1}\cdot v=v$ and $\gamma_{i_0}^{-1} \cdot w=-(\epsilon,\epsilon) \cdot \gamma_i^{-1} \cdot w$ and hence
\begin{equation}
\sum_i [\Phi(\gamma_{i_0}^{-1}v, \gamma_{i_0}^{-1}w)_\Gamma]=\sum_i [\Phi(v, (\epsilon,\epsilon) \gamma_i^{-1} w)_\Gamma]
\end{equation}
since $[\Phi(v,w)_\Gamma]=[\Phi(v,-w)_\Gamma]$. By Shimura's reciprocity law (\citep[(5)]{Ogg}), if $P_{w'}$ is the point of $X_0^B$ corresponding to $\mathbb{D}_{w'} \subset \mathbb{D}=\mathbb{H}^\pm$, then its complex conjugate $\overline{P_{w'}}$ corresponds to $\mathbb{D}_{(\epsilon,\epsilon) \cdot w'}$. It follows that
\begin{equation}
\sum_i [\gamma_i^{-1}P_w] + \sum_i [\gamma_{i_0}^{-1}P_w]=t_{H/\mathbb{Q}}[P_w]
\end{equation}
and hence
\begin{equation} \label{eq:example_adelic_current}
[\Phi(v,w,1)_K]= 2 \pi i \cdot \Delta_{*}([G_{t_{H/\mathbb{Q}}[P_w]}]),
\end{equation}
with $G_{t_{H/\mathbb{Q}}[P_w]}$ a Green function for the divisor $t_{H/\mathbb{Q}}[P_w]$ on $X_0^B$.

\subsubsection{The current $[\Phi(T,\varphi)_K]$} \label{subsubsection:example_weighted_current} Consider now an order $R=\mathbb{Z}[\alpha]$ in an imaginary quadratic field $L \subset \mathbb{C}$ and let $x^2-tx+n$ be the minimal polynomial of $\alpha$. We assume that $L$ admits an embedding into $B$ and (for simplicity) that $(d(L),d(B))=1$ and that $R=\mathcal{O}_L$ is the ring of integers of $L$. Define
\begin{equation}
\begin{split}
T&= \left( \begin{array}{cc} 1 & t/2 \\ t/2 & n \end{array} \right) \\
\varphi_{\mathcal{O}_B^2}&= \text{ characteristic function of } \hat{\mathcal{O}}_B^2
\end{split}
\end{equation}
and let us describe the current $[\Phi(T,\varphi_{\mathcal{O}_B^2})]$ in \eqref{eq:Phi(T,varphi)_K_def}. To do so, we need to describe the set of $K$-cosets of $Supp(\varphi_{\mathcal{O}_B^2}) \cap \Omega_T(\mathbb{A}_f)$. We have
\begin{equation}
\begin{split}
K \backslash [Supp(\varphi_{\mathcal{O}_B^2}) \cap \Omega_T(\mathbb{A}_f)] &= (\hat{\mathcal{O}}_B^\times \times_{\hat{\mathbb{Z}}^\times} \hat{\mathcal{O}}_B^\times) \backslash \Omega_T(\hat{\mathcal{O}}_B^2) \\
& = \prod_{v \nmid \infty} (\mathcal{O}_{B,v}^\times \times_{\mathbb{Z}_v^\times} \mathcal{O}_{B,v}^\times) \backslash \Omega_T(\mathcal{O}_{B,v}^2) 
\end{split}
\end{equation}
Note that the assignment $j \mapsto j(\alpha)$ induces a bijection between the (optimal) embeddings $j:R \rightarrow \mathcal{O}_B$ and the set of elements $w \in \mathcal{O}_B$ with $t(w)=t$ and $n(w)=n$, and this statement holds true locally too. It follows that the map $(1,w) \mapsto w$ induces a 1-1 correspondence
\begin{equation}
(\mathcal{O}_{B,v}^\times \times_{\mathbb{Z}_v^\times} \mathcal{O}_{B,v}^\times) \backslash \Omega_T(\mathcal{O}_{B,v}^2) \leftrightarrow \{ j:R \rightarrow \mathcal{O}_{B,v} \text{ optimal} \}/\mathcal{O}_{B,v}^\times
\end{equation}
where the equivalence in the RHS is with respect to conjugation by $\mathcal{O}_{B,v}^\times$. The set in the RHS has cardinality $1$ if $B_v \cong M_2(\mathbb{Q}_v)$ and $2$ if $B_v$ is division; moreover, in the latter case the local Atkin-Lehner involution permutes the two elements (see \cite[Thm. II.3.1, II.3.2]{Vigneras}). Hence the set
\begin{equation} \label{eq:example_weighted_current_1}
K \backslash [Supp(\varphi_{\mathcal{O}_B^2}) \cap \Omega_T(\mathbb{A}_f)]
\end{equation}
is a torsor under the Atkin-Lehner group $W_B$. Since the set $CM(\mathcal{O}_L)$ of points in $X_0^B$ with $CM$ by $\mathcal{O}_L$ is a torsor under $Pic(\mathcal{O}_L) \times W_B$, we conclude that
\begin{equation} \label{eq:example_weighted_current_2}
\left[\Phi \left(\left( \begin{array}{cc} 1 & t/2 \\ t/2 & n \end{array} \right),\varphi_{\mathcal{O}_B^2} \right)_K \right]=2\pi i \cdot (X_0^B \overset{\Delta}{\rightarrow} X_0^B \times X_0^B)_*([G_{t_{L/\mathbb{Q}}[CM(\mathcal{O}_L)]}])
\end{equation}
is the pushforward along the diagonal of a Green current $[G_{t_{L/\mathbb{Q}}[CM(\mathcal{O}_L)]}]$ for the divisor $t_{L/\mathbb{Q}}[CM(\mathcal{O}_L)]$.

Note that by choosing $\varphi \in \mathcal{S}(V(\mathbb{A}_f)^2)$ to have support in a single $K$-orbit of \eqref{eq:example_weighted_current_1}, we recover all the currents of the form \eqref{eq:example_adelic_current}.

\subsubsection{The current $[\Phi(T,\varphi)_K]-[\Phi(T^\iota,\varphi^\iota)_K]$} \label{subsubsection:example_iota_involution}

Recall that we have defined an involution $\iota$ on the set of pairs $(T,\varphi)$, given by \eqref{eq:def_iota_involution}. Our next goal is to give an example of the action of $\iota$.

Let $p$ be a prime, $p \equiv 1 \ (mod \ 4)$ and not dividing $d(B)$, and define
\begin{equation}
\begin{split}
T&= \left( \begin{array}{cc} 1 & \\ & p \end{array} \right) \\
\varphi_{\mathcal{O}_B^2}&= \text{ characteristic function of } \hat{\mathcal{O}}_B^2.
\end{split}
\end{equation}
The previous computation of $[\Phi(T,\varphi_{\mathcal{O}_B}^2)]$ shows that this current is supported on the diagonal $\Delta$, and more precisely that
\[
\left[ \Phi  \left( \left( \begin{array}{cc} 1 & \\ & p \end{array} \right) ,\varphi_{\mathcal{O}_B^2}\right)_K \right]=2\pi i \cdot (X_0^B \overset{\Delta}{\rightarrow} X_0^B \times X_0^B)_*([G_{t_{L/\mathbb{Q}}[CM(\mathbb{Z}[\sqrt{-p}])]}]).
\]
Note that $\varphi_{\mathcal{O}_B^2}^\iota=\varphi_{\mathcal{O}_B^2}$ and that $T^\iota=\left( \begin{smallmatrix} p & \\ & 1
\end{smallmatrix} \right)$. In particular, the current $[\Phi(T^\iota,\varphi_{\mathcal{O}_B^2}^\iota)]$ is different from $[\Phi(T,\varphi_{\mathcal{O}_B}^2)]$, as the former is supported on the Hecke correspondence $T(p)$. More precisely, the same argument as above, with trivial modifications, shows that
\[
\left[ \Phi  \left( \left( \begin{array}{cc} p & \\ & 1 \end{array} \right) ,\varphi_{\mathcal{O}_B^2}\right)_K \right]=2\pi i \cdot (X_0^B(p) \rightarrow X_0^B \times X_0^B)_*([G_{t_{L/\mathbb{Q}}[CM(\mathbb{Z}[\sqrt{-p}])]}]),
\]
where here $[CM(\mathbb{Z}[\sqrt{-p}])]$ denotes the divisor consisting of all points in $X_0^B(p)$ with CM by $\mathbb{Z}[\sqrt{-p}]$ (for some CM type of $\mathbb{Z}[\sqrt{-p}]$).

\bibliographystyle{plainnat}
\bibliography{refs} 

\vspace{1cm}

\author{\noindent Department of Mathematics,
  South Kensington Campus,
  Imperial College London \\
  London, UK, SW7 2AZ \\
  \texttt{l.garcia@imperial.ac.uk}}

\end{document}